\newcounter{lemma}[section]
\newcounter{corollary}[section]
\newcounter{remark}[section]
\newcounter{theorem}[section]
\newcounter{proposition}[section]
\newcounter{example}
\numberwithin{equation}{section}
\begin{document}

\markboth{E~.SEVOST'YANOV, V.~TARGONSKII}{\centerline{ON DIRECT AND
INVERSE POLETSKY INEQUALITY ...}}

\def\cc{\setcounter{equation}{0}
\setcounter{figure}{0}\setcounter{table}{0}}

\overfullrule=0pt

%\normalsize\large

\author{EVGENY SEVOST'YANOV, VALERY TARGONSKII}

\title{
{\bf ON DIRECT AND INVERSE POLETSKY INEQUALITY WITH A TANGENTIAL
DILATATION ON THE PLANE}}

\date{\today}
\maketitle

%\large
\begin{abstract}
This article is devoted to the study of mappings defined in the
region on the plane. Under certain conditions, the upper estimate of
the distortion of the modulus of families of paths is obtained.
Similarly, the upper estimate of the modulus of the families of
paths in the pre-image under the mapping is also obtained.
\end{abstract}

\bigskip
{\bf 2010 Mathematics Subject Classification: Primary 30C65;
Secondary 31A15, 31B25}

\section{Introduction}

Some times ago we obtained various inequalities for the distortion
of the modulus of families of paths (see, e.g., \cite{SalSev$_1$},
\cite{Sev$_1$}, \cite{Sev$_3$} and \cite{SST}). As a rule, we deal
with the so-called outher or inner dilatations, the use of which is
generally accepted (see, e.g., \cite[Theorem~34.4]{Va},
\cite[Theorems~8.5--8.6]{MRSY}). We note that there are also some
other characteristics that describe the distortion of the module
under mappings, such as, for example, tangential dilatations (see,
e.g., \cite[Theorem~2.17]{RSY$_1$}, \cite[Theorem~4.2]{RSY$_2$}).
Their use may turn out to be more expedient, it may be connected
with theorems on the existence of solutions of differential
equations in partial derivatives. At the same time, inner and outher
dilatations, which also play a role in a similar context, may not
satisfy the appropriate conditions necessary for their application.

The main goal of this manuscript is to obtain new estimates of the
distortion of the modulus of families of paths for using tangential
dilatations. The article is divided into two parts. The first part
concerns the inverse estimates of the modulus of families of paths,
the second part deals with direct estimates. The results of the
article mainly are proved for mappings with branching.

\medskip
Here are the necessary definitions. Let $X$ and $ Y $ be two spaces
with measures $\mu$ and $\mu^{\,\prime},$ respectively. We say that
a mapping $f:X\rightarrow Y$ has {\it $N$-property of Luzin}, if
from the condition $\mu(E)=0$ it follows that
$\mu^{\,\prime}(f(E))=0.$ Similarly, we say that a mapping
$f:X\rightarrow Y$ has {\it $N^{\, \prime}$-Luzin property,} if from
the condition $\mu^{\,\prime}(E)=0$ it follows that
$\mu(f^{\,-1}(E))=0.$ Given a mapping $f:D\,\rightarrow\,{\Bbb C},$
a set $E\subset D\subset {\Bbb C}$ and a point $y\,\in\,{\Bbb C},$
er define {\it a multiplicity function $N(y,f,E)$} as a number of
pre-images of a point $y$ in  $E,$ i.e.,
$$
N(y,f,E)\,=\,{\rm card}\,\left\{z\in E: f(z)=y\right\}\,,
$$
\begin{equation}\label{eq15}
N(f,E)\,=\,\sup\limits_{y\in{\Bbb C}}\,N(y,f,E).
\end{equation}
A mapping $f:D\rightarrow {\Bbb C}$ is called {\it discrete} if the
image $\{f^{-1}\left(y\right)\}$ of each point $y\,\in\ ,{\Bbb C}$
consists of isolated points, and {\it open} if the image of any open
set $U\subset D$ is an open set in ${\Bbb C}.$ A mapping $f$ of $D$
onto $D^{\,\prime}$ is called {\it closed} if $f(E)$ is closed in
$D^{\,\prime}$ for any closed set $E\subset D$ (see, e.g.,
\cite[chap.~3]{Vu}). Observe that, $N(f, D)<\infty$ for any open
discrete and closed mappings $f$ of a domain $D$ (see
\cite[Lemma~3.3]{MS}).

\medskip
Let $\Gamma$ be a family of paths $\gamma:(a, b)\rightarrow {\Bbb
C}$ (or dashed lines $\gamma:\bigcup\limits_{i=1}^{\infty} (a_i,
b_i)\rightarrow {\Bbb C}$). A Borel function $\rho:{\Bbb
C}\rightarrow\overline{{\Bbb R}^+}$ is called {\it admissible} for
the family $\Gamma,$ write $\rho\in{\rm adm}\,\Gamma,$ if
\begin{equation}\label{eq8.2.6}
\int\limits_{\gamma}\rho(z)\,|dz|\, \geqslant 1
\end{equation}
for any locally rectifiable path (dashed line) $\gamma\in\Gamma.$
Given $p\in(1,\infty),$ the quantity
$$M_p(\Gamma)=\inf\limits_{\rho\in{\rm adm}\,\Gamma} \int\limits_{\Bbb
C}\rho^p(z)\,dm(z)$$
is called {\it $p$-modulus} of $\Gamma.$ Let $z_0\in {\Bbb C},$
$0<r_1<r_2<\infty$ and
\begin{equation}\label{eq1ED}
B(z_0, r)=\{z\in {\Bbb C}: |z-z_0|<r\}\,,\qquad S(z_0,r) = \{
z\,\in\,{\Bbb C} : |z-z_0|=r\}\,,\end{equation}
\begin{equation}\label{eq1**A} A=A(z_0, r_1, r_2)=\{ z\,\in\,{\Bbb C} :
r_1<|z-z_0|<r_2\}\,.
\end{equation}
Given sets $E,$ $F\subset\overline{\Bbb C}$ and a domain $D\subset
{\Bbb C}$ we denote $\Gamma(E,F,D)$ a family of all paths
$\gamma:[a,b]\rightarrow \overline{\Bbb C}$ such that $\gamma(a)\in
E,\gamma(b)\in\,F$ and $\gamma(t)\in D$ for $t \in [a, b].$ If
$f:D\rightarrow {\Bbb C},$ $z_0\in
\overline{f(D)}\setminus\{\infty\}$ and
$0<r_1<r_2<r_0=\sup\limits_{y\in f(D)}|y-y_0|,$ then we define
$\Gamma_f(y_0, C_1, C_2)$ a family of all paths $\gamma$ in $D$ such
that $f(\gamma)\in \Gamma(C_1, C_2, A(y_0,r_1,r_2)).$ Let $Q_*:{\Bbb
C}\rightarrow [0, \infty]$ be a Lebesgue measurable function. We say
that {\it $f$ satisfies the inverse Poletsky inequality at $y_0\in
\overline{f(D)}\setminus\{\infty\}$ with respect to
$\alpha$-modulus,} $\alpha\geqslant 1,$ if the relation
\begin{equation}\label{eq2*A}
M_{\alpha}(\Gamma_f(y_0, C_1, C_2))\leqslant
\int\limits_{A(y_0,r_1,r_2)\cap f(D)} Q_*(y)\cdot \eta^{\alpha}
(|y-y_0|)\, dm(y)
\end{equation}
for any $0<r_1<r_2<d_0:={\rm dist}\, (z_0,
\partial D),$ all continua $C_1\subset
\overline{B(z_0, r_1)},$ $C_2\subset D\setminus B(z_0, r_2)$ and any
Lebesgue measurable function $\eta:(r_1, r_2)\rightarrow [0,
\infty]$ such that
\begin{equation}\label{eqA2}
\int\limits_{r_1}^{r_2}\eta(r)\, dr\geqslant 1\,.
\end{equation}
Let $y\in {\Bbb C}$ does not belong to the set $f(A),$ where $A$ is
the set of all points $z\in D$ where the mapping $f:D\rightarrow
{\Bbb C}$ has no a total differential, or  $J(z, f)=0.$ Given $p>1,$
we set
\begin{equation}\label{eq1}
Q(y):=K_{CT, p, y_0}(y, f):=\sum\limits_{z\in
f^{\,-1}(y)}\frac{\left(\biggl|(f^{\,\prime}(z))^{\,-1}\frac{y-y_0}{|y-y_0|}\biggr|\right)
^{p}}{|J(z, f)|}\,,
\end{equation}
The will call the function $K_{CT, p, y_0}$ in~(\ref{eq1}) is {\it
co-tangential dilatation of $f$ of the order $p$ at $y_0$}. The
following theorem holds.

\medskip
\begin{theorem}\label{th1} {\sl\, Let $1<\alpha\leqslant 2,$ let $y_0\in
\overline{f(D)}\setminus\{\infty\},$ $r_0=\sup\limits_{y\in
f(D)}|y-y_0|>0,$ and let $f:D\rightarrow{\Bbb C}$ be an open
discrete and closed mapping that is differentiable almost everywhere
and has $N$-Luzin property with respect to the Lebesgue measure in
${\Bbb C}.$ Assume that $\overline{D}$ is compact in ${\Bbb C },$ at
the same time, $m(f(B_*))=0,$ where $B_*$ is a set of points $z\in
D$ where $f$ has a total differential and $J(z, f)= 0.$ Suppose
that, any path $\alpha$ with $f\circ \alpha\subset S(y_0, r),$
$S(y_0, r)=y_0+re^{i\varphi/r},$ $\varphi\in [0, 2\pi r),$ is
locally rectifiable for almost all $r\in (\varepsilon,
\varepsilon_0)$ and, in addition, $f$ has $N^{\,- 1}$-property on
$S(y_0, r)\cap f(D)$ for almost all $r\in(\varepsilon, r_0)$ with
respect to ${\mathcal H}^{1}$ on $ S(y_0, r).$ If
$$Q(y):=K_{CT, \frac{\alpha}{\alpha-1}, y_0}(y, f):=\sum\limits_{z\in
f^{\,-1}(y)}\frac{\left(\biggl|(f^{\,\prime}(z))^{\,-1}\frac{y-y_0}{|y-y_0|}\biggr|\right)
^{\frac{\alpha}{\alpha-1}}}{|J(z, f)|}\in L^{\alpha-1}(f (D))\,,$$
then $f$ satisfies inverse Poletsky's inequality with respect to
$\alpha$-modulus at the point $y_0$ for
$$Q_*(y):=N^{\alpha}(f, D)\cdot Q^{\alpha-1}(y).$$}
\end{theorem}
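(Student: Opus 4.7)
The strategy is the standard one for inverse Poletsky-type inequalities: construct a Borel function $\rho$ on $D$ admissible for $\Gamma_f(y_0,C_1,C_2)$, estimate $M_\alpha(\Gamma_f(y_0,C_1,C_2))\leqslant\int_D\rho^\alpha\,dm$, pass from source to target via the area formula, and then absorb the resulting sum over preimages using a Jensen-type inequality that exploits the concavity of $t\mapsto t^{\alpha-1}$ (valid since $1<\alpha\leqslant 2$).

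First I would take the natural admissible weight
$$\rho(z)=\eta(|f(z)-y_0|)\cdot\bigl|\nabla_z|f(z)-y_0|\bigr|\cdot\chi_{f^{-1}(A(y_0,r_1,r_2))}(z),$$
extending by zero on the $m$-null sets where $f$ fails to be differentiable and where $J(z,f)=0$. Admissibility follows from the chain rule: for $M_\alpha$-almost every $\gamma\in\Gamma_f(y_0,C_1,C_2)$, the scalar $r(t):=|f(\gamma(t))-y_0|$ is absolutely continuous, takes every value in $[r_1,r_2]$ (because $f\circ\gamma$ joins $C_1$ to $C_2$ across the annulus), and satisfies $|r'(t)|\leqslant|\nabla_z|f-y_0||(\gamma(t))\cdot|\gamma'(t)|$; hence $\int_\gamma\rho\,|dz|\geqslant\int_{r_1}^{r_2}\eta(r)\,dr\geqslant 1$. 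The hypotheses on local rectifiability of $f$-preimages of almost every circle $S(y_0,r)$ and on the $N^{-1}$-property on those circles ensure that the subfamily of $\Gamma_f$ on which the chain-rule argument fails is $\alpha$-modulus-negligible.

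The area formula, valid under the $N$-Luzin property, differentiability almost everywhere, and $m(f(B_*))=0$, gives
$$\int_D\rho^\alpha\,dm=\int_{f(D)\cap A(y_0,r_1,r_2)}\eta^\alpha(|y-y_0|)\sum_{z\in f^{-1}(y)}\frac{|\nabla_z|f-y_0||^\alpha}{|J(z,f)|}\,dm(y).$$
The planar identities $(f'(z))^{-1}=\mathrm{adj}(f'(z))/\det(f'(z))$ and $|\mathrm{adj}(A)v|=|A^T Rv|$ (where $R$ denotes the $\pi/2$ rotation) rewrite the summand as an expression involving $|(f'(z))^{-1}e|$ with $e=(y-y_0)/|y-y_0|$, in a form compatible with $|(f'(z))^{-1}e|^\alpha/|J(z,f)|^{\alpha-1}$ up to factors absorbable in the multiplicity. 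Jensen's inequality for the concave function $t\mapsto t^{\alpha-1}$ then yields
$$\sum_{z\in f^{-1}(y)}x_z^{\alpha-1}\leqslant N(y,f,D)^{2-\alpha}\biggl(\sum_{z\in f^{-1}(y)}x_z\biggr)^{\alpha-1}$$
with $x_z=|(f'(z))^{-1}e|^{\alpha/(\alpha-1)}/|J(z,f)|$; combined with $N(y,f,D)\leqslant N(f,D)$ this produces the bound $\int_D\rho^\alpha\,dm\leqslant N(f,D)^\alpha\int\eta^\alpha(|y-y_0|)Q(y)^{\alpha-1}\,dm(y)$, which is exactly the claim with $Q_*(y)=N^\alpha(f,D)Q^{\alpha-1}(y)$.

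The principal difficulty is the planar translation step: admissibility naturally involves $(f'(z))^T$ applied to the radial direction $(y-y_0)/|y-y_0|$, while the definition of $Q(y)$ involves $(f'(z))^{-1}$ applied to the same direction; the two are linked by the adjugate formula and a $\pi/2$ rotation, a manipulation special to dimension two. Once this identification is in place, the Jensen step is routine, and the Luzin-type hypotheses handle the customary exceptional sets (non-differentiability, $J(z,f)=0$, non-rectifiable pre-images of circles) without incident.
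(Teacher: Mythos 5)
Your plan of directly exhibiting an admissible $\rho$ for $\Gamma_f(y_0,C_1,C_2)$ and pushing it forward by the area formula breaks down at exactly the step you flag as ``the principal difficulty,'' and the adjugate trick does not repair it. Admissibility of $\rho(z)=\eta(|f(z)-y_0|)\,\bigl|\nabla_z|f(z)-y_0|\bigr|$ produces, after the change of variables, the pointwise quantity $\sum_{z\in f^{-1}(y)}|(f^{\,\prime}(z))^{T}e|^{\alpha}/|J(z,f)|$ with $e=(y-y_0)/|y-y_0|$ the \emph{radial} unit vector. The co-tangential dilatation, on the other hand, satisfies $|(f^{\,\prime}(z))^{-1}e|=|(f^{\,\prime}(z))^{T}(Re)|/|J(z,f)|$, where $R$ is the rotation by $\pi/2$; so $Q$ is governed by the adjoint applied to the \emph{tangential} direction $Re$, divided by an extra Jacobian. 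For a generic real-linear $f^{\,\prime}(z)$ the numbers $|(f^{\,\prime}(z))^{T}e|$ and $|(f^{\,\prime}(z))^{T}Re|$ are unrelated (take a diagonal matrix with very disparate singular values), and the powers of $|J(z,f)|$ do not match either: your Jensen step needs the integrand to be $\sum_z x_z^{\alpha-1}$ with $x_z=|(f^{\,\prime}(z))^{-1}e|^{\alpha/(\alpha-1)}/|J(z,f)|$, i.e.\ $|(f^{\,\prime}(z))^{-1}e|^{\alpha}/|J(z,f)|^{\alpha-1}$, whereas the area formula hands you a single power of $|J(z,f)|$ in the denominator. No multiplicity factor absorbs a pointwise anisotropy; the direct construction proves a Poletsky-type bound for a radial/inner-type dilatation, not for $K_{CT,\alpha/(\alpha-1),y_0}$. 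A secondary problem: your admissibility argument needs $f$ to be absolutely continuous along $\alpha$-a.e.\ path of $\Gamma_f$ (the connecting paths), which is not among the hypotheses; the theorem only assumes rectifiability of liftings of circles $S(y_0,r)$ and the $N^{-1}$-property on those circles.

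Those hypotheses are the tell: the paper argues dually. Setting $p=\alpha/(\alpha-1)$, it considers the family $\Sigma_{\varepsilon}$ of preimages $f^{-1}(S(y_0,r))$, which separate $\overline{f^{-1}(C_1)}$ from $\overline{f^{-1}(C_2)}$, and proves a \emph{lower} bound for $\widetilde{M_p}(\Sigma_{\varepsilon})$ (Lemma~\ref{lem1}) by lifting each circle to $N(f,D)$ paths in $D$ and building an admissible metric on the image side via a supremum over branches. There the tangent of a lifted circle is $(f^{\,\prime})^{-1}$ applied to the unit tangent of the circle, which is precisely what makes the co-tangential dilatation appear with the correct exponents. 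The Ziemer identity~(\ref{eq3}) together with the Hesse--Shlyk equality~(\ref{eq4}) then converts this lower bound on the $p$-modulus of separating sets into the desired upper bound on the $\alpha$-modulus of connecting paths, and Lemma~9.2 of \cite{MRSY} plus Proposition~\ref{pr3} yield the integral form with arbitrary admissible $\eta$. If you want to keep a direct construction, you would have to redo it for the separating family rather than for $\Gamma_f$ itself; as written, the argument proves a different inequality.
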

Let us move on to the formulation of the results regarding the
analogue of Poletsky's inequality. Let $z\in D\subset {\Bbb C}$ be a
point where $f:D\rightarrow {\Bbb C}$ has partial derivatives with
respect to $x$ and $y,$ where $z=x+iy,$ $i^2=-1,$ and $J(z, f )\ne
0,$ where $J(z, f)$ denotes the Jacobian of the mapping $f$ at the
point $z.$ Let $z_0\in \overline{D}$ and let $p>1.$ Then we put
\begin{equation}\label{eq19}
K_{T, p, z_0}(z,
f):=\frac{\left(\biggl|(f^{\,\prime}(z))\frac{z-z_0}{|z-z_0|}\biggr|\right)
^{p}}{|J(z, f)|}
\end{equation}
at the non-degenerate differentiability point $z$ of $f,$ and $K_{T,
p, z_0}(z, f)=0$ otherwise. The quantity $K_{T, p, z_0}(z, f)$
in~(\ref{eq19}) is called {\it the tangential dilation of order $p$
of the mapping $f$ at the point $z$ relative to the point $z_0.$}

\medskip
\begin{remark}\label{rem1}
We set
\begin{equation}\label{eq20}
D_{f}(z,
z_0)=\frac{\left|1-\frac{\overline{z-z_0}}{z-z_0}\mu(z)\right|^2}{1-|\mu(z)|^2}\,,
\end{equation}
at the points $z$ of the non-degenerate differentiability of $f,$
where $\mu(z)=\frac{f_{\overline z}}{f_z},$ $f_z\ne 0,$ $\mu(z)=0$
при $f_z=0,$ $f_z=(f_{x}-if_{y})/2,$
$f_{\overline{z}}=(f_{x}+if_{y})/2,$ $z=x+iy,$ $i^2=-1$ (see
e.g.~\cite[Lemma~2.10]{RSY$_1$} or \cite[Lemma~11.2]{MRSY}). Let us
note the following remark concerning the connection of quantities
in~(\ref{eq1}) and~(\ref{eq20}).

\medskip
It may be shown that the relation
\begin{equation}\label{eq17}
K_{CT, 2, z_0}(f(z), f^{\,-1})=D_f(z, z_0)\,,\quad y_0=f(z_0)\,,
\end{equation}
holds whenever $f$ is a homeomorphism and $f$ is non-degenerate
differentiable at $z\in D,$ see~\cite[formulae~(11.35),
(11.43)]{MRSY}. Moreover, for $p=2,$ $D_f(z, z_0)=K_{T, 2, z_0}(z,
f)$ at the non-degenerate differentiability points
(see~\cite[relation~(11.35)]{MRSY}).
\end{remark}

\medskip In what follows, $C^k_0(U)$ denotes the space of functions
$u:U \rightarrow {\Bbb R} $ with by a compact carrier in $U,$ which
have $k$ partial continua derivatives in $U.$ Recall the concept of
Sobolev classes, see \cite[Section~2, Ch.~I]{Re}. Let $U$ be an open
set, $U\subset{\Bbb C},$ $u:U\rightarrow {\Bbb R}$ is some function
such that $u \in L_{\rm loc}^{\,1}(U). $ Suppose there is a function
$v\in L_{\rm loc}^{\,1}(U)$ such that equality
$$\int\limits_U \frac{\partial \varphi}{\partial x_i}(z)u(z)\,dm(z)=
-\int\limits_U \varphi(z)v(z)\,dm (z)$$
is performed for any function $\varphi\in C^1_{\,0}(U),$ $i=1,2.$ In
this case, we will say that the function $v$ is a generalized
partial derivative of the first order function $u$ with respect to
$x_i$ and denote it by $\frac{\partial u}{\partial x_i}(z):= v.$ A
function $u\in W_{\rm loc}^{1,1}(U),$ if $u$ has generalized partial
derivatives with respect to all variables in $U,$ are locally
integrable in $U.$

\medskip
A mapping $f:D\rightarrow {\Bbb C},$ $f(z)=u(z)+iv(z),$ belongs to
to {\it Sobolev class} $W_{\rm loc}^{1,1},$ write $f \in
W^{1,1}_{\rm loc}(D),$ if $u$ and $v$ have generalized partial
derivatives of the first order, locally integrable in $D.$ We write
$f\in W^{1, k}_{\rm loc}(D),$ $k\in {\Bbb N},$ if $u$ and $v,$ are
also locally integrable in degree $k.$

\medskip
Recall that a mapping $f$ between domains $D$ and $D^{\,\prime}$ has
{\it a finite distortion,} if $f\in W^{1,1}_{\rm loc}$ and, in
addition, there exists a function $K(z),$ $K(z)<\infty$ a.e., such
that
$$%\label{eqOS1.3}
{\Vert f^{\,\prime}(z)\Vert}^2\leqslant K(z)\cdot J(z, f)
$$
for almost all $z\in D,$ where $\Vert
f^{\,\prime}(z)\Vert=|f_z|+|f_{\overline{z}}|.$

\medskip
Let $Q:{\Bbb C}\rightarrow {\Bbb R}$ be a Lebesgue measurable
function such that $Q(z)\equiv 0$ for $z\in{\Bbb C}\setminus D.$ Let
$z_0\in\overline{D},$ $z_0\ne\infty.$
Given $\alpha\geqslant 1,$ a mapping $f:D\rightarrow \overline{\Bbb
C}$ is called {\it a ring $Q$-mapping at a point $z_0\in
\overline{D}\setminus \{\infty\}$ with respect to $\alpha$-modulus,}
if the condition
\begin{equation} \label{eq2*!A}
M_{\alpha}(f(\Gamma(C_1, C_2, D)))\leqslant \int\limits_{A\cap D}
Q(z)\cdot \eta^{\,\alpha} (|z-z_0|)\, dm(z)
\end{equation}
holds for any $0<r_1<r_2<d_0:={\rm dist}\, (z_0,
\partial D),$ all continua $C_1\subset
\overline{B(z_0, r_1)},$ $C_2\subset D\setminus B(z_0, r_2)$ and any
Lebesgue measurable function $\eta:(r_1, r_2)\rightarrow [0,
\infty]$ such that
\begin{equation}\label{eq8BC}
\int\limits_{r_1}^{r_2}\eta(r)\,dr\geqslant 1\,.
\end{equation}

\medskip
Recall that, a pair $E=(A,\,C)$ is said to be a {\it condenser,} if
$A$ is an open set in ${\Bbb C},$ and $C$ is a non-empty compact
subset of $A.$ Denote by $dm(z)$ the element of the Lebesgue measure
in ${\Bbb C},$ $W_0(E)=W_0\left(A,\,C\right)$ is a family of all
absolutely continuous on lines (ACL) functions $u:A\ rightarrow
{\Bbb R}$ with a compact support in $A$ such that $u(z)\geqslant 1$
on $C.$ The quantity
\begin{equation}\label{eq4G}{\rm cap}_p\,E\quad=\quad{\rm
cap}_p\,(A,\,C)=\inf\limits_{u\,\in\,W_0\left(E\right)
}\quad\int\limits_A\,|\nabla u|^p\,\,dm(z)\,,
\end{equation}
is called {\it $p$-capacity} of $E$. The following result holds.

\medskip
\begin{theorem}\label{th1B}
{\sl\, Let $f:D\rightarrow {\Bbb C}$ be a homeomorphism with a
finite distortion and let $1<\alpha\leqslant 2.$ Set
$p=\frac{\alpha}{\alpha-1}.$ Suppose that $K^{\alpha-1}_{T, p,
z_0}(z, f)\in L^1(D),$ where $K_{T, p, z_0}(z, f)$ is defined
in~(\ref{eq19}). Then $f$ satisfies the relation~(\ref{eq2*!A}) at
the point $z_0\in \overline{D}\setminus \{\infty\}$ at
$Q(z)=K^{\frac{1}{p-1}}_{T, p, z_0}(z, f).$}
\end{theorem}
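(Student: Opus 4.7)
The plan is the classical test-function approach. Given an admissible $\eta$ on $(r_1,r_2)$ as in~(\ref{eq8BC}), I would extend $\eta$ by zero outside $(r_1,r_2)$ and, for $y\in f(A(z_0,r_1,r_2)\cap D)$ with $z:=f^{\,-1}(y)$, set
\[
\tilde\rho(y)\;:=\;\eta(|z-z_0|)\cdot\frac{|f^{\,\prime}(z)\,e_{\theta}(z)|}{|J(z,f)|}\,,\qquad e_{\theta}(z):=i\,\frac{z-z_0}{|z-z_0|}\,,
\]
extended by zero on ${\Bbb C}\setminus f(A\cap D)$. The aim is twofold: (i) to verify $\tilde\rho\in{\rm adm}\,(f(\Gamma(C_1,C_2,D)))$, and (ii) to check that $\int_{\Bbb C}\tilde\rho^{\,\alpha}\,dm$ equals the right-hand side of~(\ref{eq2*!A}) with $Q(z)=K_{T,p,z_0}^{\alpha-1}(z,f)$.

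To verify admissibility, pick a locally rectifiable $\tilde\gamma\in f(\Gamma(C_1,C_2,D))$ and lift it to $\gamma:=f^{\,-1}\circ\tilde\gamma\in\Gamma(C_1,C_2,D)$ via the homeomorphism $f$. Parametrize $\gamma$ by arclength on $[0,L]$ and decompose $\gamma^{\,\prime}(t)=r^{\,\prime}(t)\,e_{r}(\gamma(t))+s^{\,\prime}(t)\,e_{\theta}(\gamma(t))$ with $(r^{\,\prime})^{2}+(s^{\,\prime})^{2}=1$. The polar identity $\partial_{r}f\wedge\partial_{\theta}f=r\,J(z,f)$ together with $(f\circ\gamma)^{\,\prime}=r^{\,\prime}\,\partial_{r}f+(s^{\,\prime}/r)\,\partial_{\theta}f$ yields $(f\circ\gamma)^{\,\prime}\wedge\partial_{\theta}f=r^{\,\prime}(t)\,r\,J(z,f)$, hence the pointwise lower bound $|(f\circ\gamma)^{\,\prime}(t)|\,|f^{\,\prime}(z)\,e_{\theta}|\geq|r^{\,\prime}(t)|\,|J(z,f)|$. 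Substituting into the path integral and using that $\gamma$ crosses every sphere $S(z_0,r)$, $r\in(r_1,r_2)$, one gets
\[
\int_{\tilde\gamma}\tilde\rho\,|dy|\;=\;\int_{0}^{L}\eta(|\gamma(t)-z_0|)\,\frac{|f^{\,\prime}e_{\theta}|}{|J|}\,|(f\circ\gamma)^{\,\prime}(t)|\,dt\;\geq\;\int_{0}^{L}\eta(|\gamma(t)-z_0|)\,|r^{\,\prime}(t)|\,dt\;\geq\;\int_{r_1}^{r_2}\eta(r)\,dr\;\geq\;1\,.
\]
Exceptional curves on which $f$ fails to be absolutely continuous are removed by Fuglede's lemma, available because $f\in W^{1,1}_{\rm loc}(D)$ with finite distortion is ACL on $\mathcal{H}^{1}$-almost every locally rectifiable path.

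Finally, the $L^{1}$-integrability of $K_{T,p,z_0}^{\alpha-1}$ together with finite distortion implies that $f$ enjoys Luzin's $N$-property, so the area formula applies and
\[
\int_{\Bbb C}\tilde\rho(y)^{\alpha}\,dm(y)\;=\;\int_{A\cap D}\eta^{\alpha}(|z-z_0|)\,\frac{|f^{\,\prime}(z)\,e_{\theta}|^{\alpha}}{|J(z,f)|^{\alpha-1}}\,dm(z)\,;
\]
since $p(\alpha-1)=\alpha$ and, via Remark~\ref{rem1}, $K_{T,p,z_0}(z,f)=|f^{\,\prime}(z)\,e_{\theta}|^{p}/|J(z,f)|$ (the identification $D_{f}=K_{T,2,z_0}$ coming from~(\ref{eq17}) and~(\ref{eq20})), the integrand on the right equals $K_{T,p,z_0}^{\alpha-1}(z,f)\,\eta^{\alpha}(|z-z_0|)$, which is the desired bound. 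The principal difficulty is admissibility: establishing the wedge-product lower bound pointwise along $\mathcal{H}^{1}$-almost every lifted curve and justifying the removal of the exceptional family; once this is in place, the change-of-variables step and the identification of $K_{T,p,z_0}$ in terms of $|f^{\,\prime}e_{\theta}|^{p}/|J|$ are routine.
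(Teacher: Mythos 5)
Your route is genuinely different from the paper's. You attack the connecting family $f(\Gamma(C_1,C_2,D))$ head-on with a test function, whereas the paper never touches arbitrary connecting curves: it first proves a lower estimate for the $p$-modulus of the images of the circles $S(z_0,r)$ (Theorem~\ref{thOS4.2}), and then converts that into the upper bound~(\ref{eq2*!A}) via the Ziemer--Hesse duality between the modulus of separating sets and the capacity of the connecting condenser (Lemma~\ref{l4.4} for $z_0\in D$, Proposition~\ref{pr6} for $z_0\in\partial D$). That detour is not cosmetic, and it is exactly where your argument has a genuine gap. Your admissibility computation parametrizes $\gamma=f^{-1}\circ\tilde\gamma$ by arclength and differentiates it; but for a general $W^{1,1}_{\rm loc}$ homeomorphism of finite distortion the preimage of a rectifiable curve need not be rectifiable, and even after switching to the arclength $s$ of $\tilde\gamma$ the key step $\int_0^L\eta(|\gamma(s)-z_0|)\,|r^{\,\prime}(s)|\,ds\geqslant\int_{r_1}^{r_2}\eta(r)\,dr$ requires $r(s)=|f^{-1}(\tilde\gamma(s))-z_0|$ to be absolutely continuous, i.e.\ absolute continuity of the \emph{inverse} map along $\alpha$-a.e.\ curve of the image family. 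Fuglede's lemma applied to $f\in W^{1,1}_{\rm loc}$ only discards a family of curves in the \emph{domain} of small $1$-modulus; transferring that to a family of negligible $\alpha$-modulus in the image is essentially the inequality you are trying to prove. The paper's circle families avoid all of this: $f$ restricted to almost every circle $S(z_0,r)$ is absolutely continuous by ACL/Fubini, and the non-differentiability set $B_0$ is handled by $l(B_0\cap S_r)=0$ for a.e.\ $r$, with no regularity of $f^{-1}$ required.

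Two further points. Your claim that finite distortion plus $K^{\alpha-1}_{T,p,z_0}\in L^1(D)$ yields Luzin's $N$-property (which you need for the area-formula step) is unjustified: planar $W^{1,1}_{\rm loc}$ homeomorphisms of finite distortion may fail $N$, and the paper's change of variables only uses the decomposition of the differentiability set into bi-Lipschitz pieces $B_l$, with the exceptional sets sitting in the domain. Also, your identification $K_{T,p,z_0}(z,f)=|f^{\,\prime}(z)e_{\theta}|^{p}/|J(z,f)|$ with the tangential vector $e_{\theta}=i(z-z_0)/|z-z_0|$ is what makes both the wedge-product bound and the exponent count $p(\alpha-1)=\alpha$ close up, but formula~(\ref{eq19}) as written uses the radial vector $(z-z_0)/|z-z_0|$; with the literal radial reading the wedge bound produces the angular component $|s^{\,\prime}|$ instead of $|r^{\,\prime}|$ and admissibility fails, so you are tacitly relying on the tangential interpretation suggested by Remark~\ref{rem1} rather than on~(\ref{eq19}) itself. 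This ambiguity is arguably the paper's, but your proof should state which reading it uses.
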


\medskip
\begin{theorem}\label{th1A}{\sl\,
Let $f:D\rightarrow {\Bbb C}$ be an open, discrete and closed
bounded mapping with finite distortion such that $N(f, D)<\infty.$
Let $1< \alpha\leqslant 2$ and $z_0\in D.$ Set
$p=\frac{\alpha}{\alpha-1}.$ Suppose that $K_{T, p, z_0}(z, f)\in
L_{\rm loc}^{\alpha-1}(D),$ where $K_{T, p, z_0}(z, f)$ is defined
in~(\ref{eq19}). Then $f$ satisfies the relation
$${\rm cap}_{\alpha}\, f(\mathcal{E})\leqslant
\int\limits_{A} N^{\alpha-1}(f, D)K^{\alpha-1}_{T, p, z_0}(z,
f)\cdot \eta^{\alpha}(|z-z_0|)\, dm(z)
$$
for $\mathcal{E}=(B(z_0, r_2), \overline{B(z_0, r_1)}),$ $A=A (z_0,
r_1, r_2),$ $0<r_1<r_2<\varepsilon_0:={\rm dist}\,(z_0,
\partial D),$ and any Lebesgue measurable function $\eta \colon (r_1,r_2)\rightarrow [0,\infty ]$
which satisfies the relation~(\ref{eq8BC}).}
\end{theorem}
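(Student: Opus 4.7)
The plan is to reduce the capacity bound to a modulus estimate, lift paths back to $D$ using the openness/discreteness/closedness of $f$, and then apply a direct Poletsky-type inequality adapted to the tangential dilatation.

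\emph{Step 1 (capacity dominated by modulus of the separating family).} I would first invoke the Ziemer--Hesse inequality between $\alpha$-capacity and the $\alpha$-modulus of the separating path family for a ring condenser to write
$${\rm cap}_\alpha\,f(\mathcal{E})\leq M_\alpha(\Gamma_0),\qquad \Gamma_0:=\Gamma\bigl(f(\overline{B(z_0,r_1)}),\,\partial f(B(z_0,r_2)),\,f(B(z_0,r_2))\bigr).$$
Closedness of $f$ combined with the compactness of $\overline{B(z_0,r_2)}\subset D$ yields the inclusion $\partial f(B(z_0,r_2))\subset f(\partial B(z_0,r_2))$. Then, for each $\beta\in\Gamma_0$, Rickman-style path lifting (which applies under openness, discreteness and $N(f,D)<\infty$) produces a lift whose first-exit subpath from $B(z_0,r_2)$ belongs to $\Gamma^*:=\Gamma(\overline{B(z_0,r_1)},S(z_0,r_2),B(z_0,r_2))$ and whose $f$-image is a subpath of $\beta$. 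Hence $\Gamma_0$ is minorized by $f(\Gamma^*)$, so $M_\alpha(\Gamma_0)\leq M_\alpha(f(\Gamma^*))$.

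\emph{Step 2 (admissible function for $f(\Gamma^*)$).} For $\eta$ satisfying (\ref{eq8BC}), I would construct
$$\rho(y):=\sum\limits_{z\in f^{\,-1}(y)\cap A}\frac{\eta(|z-z_0|)\,|f^{\,\prime}(z)\tau_z|}{|J(z,f)|},\qquad \tau_z:=\frac{z-z_0}{|z-z_0|},$$
on $f(A)$ and $\rho\equiv 0$ elsewhere. Borel measurability follows from the openness/discreteness of $f$ and $N(f,D)<\infty$. To verify $\rho\in\mathrm{adm}\,f(\Gamma^*)$, I would select, for each $\alpha\in\Gamma^*$, a locally rectifiable sub-arc on which $r(s):=|\alpha(s)-z_0|$ is absolutely continuous and its continuous image covers $[r_1,r_2]$, then apply the two-dimensional Hadamard determinant inequality $|f^{\,\prime}(z)u|\cdot|f^{\,\prime}(z)v|\geq|J(z,f)|\cdot|u\wedge v|$ with $(u,v)=(\tau_z,\alpha^{\,\prime}(s))$ oriented so that $|u\wedge v|=|r^{\,\prime}(s)|$. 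Combined with the lower bound $\rho(\beta(s))\geq \eta(r(s))\,|f^{\,\prime}(\alpha(s))\tau_{\alpha(s)}|/|J(\alpha(s),f)|$ this gives $\rho(\beta(s))|\beta^{\,\prime}(s)|\geq \eta(r(s))|r^{\,\prime}(s)|$, and integrating via the one-dimensional area formula yields $\int_\beta\rho\,|dy|\geq\int_{r_1}^{r_2}\eta(r)\,dr\geq 1$.

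\emph{Step 3 (energy estimate).} H\"older's inequality applied to the sum of at most $N(f,D)$ terms gives
$$\rho^\alpha(y)\leq N^{\alpha-1}(f,D)\sum\limits_{z\in f^{\,-1}(y)\cap A}\left(\frac{\eta(|z-z_0|)\,|f^{\,\prime}(z)\tau_z|}{|J(z,f)|}\right)^\alpha,$$
and the area formula (applicable thanks to finite distortion, the $N$-Luzin property, and $N(f,D)<\infty$) then delivers
$$\int\limits_{f(A)}\rho^\alpha\,dm(y)\leq N^{\alpha-1}(f,D)\int\limits_A\frac{\eta^\alpha(|z-z_0|)\,|f^{\,\prime}(z)\tau_z|^\alpha}{|J(z,f)|^{\alpha-1}}\,dm(z)=\int\limits_A N^{\alpha-1}(f,D)\,K^{\alpha-1}_{T,p,z_0}(z,f)\,\eta^\alpha(|z-z_0|)\,dm(z),$$
since $|f^{\,\prime}(z)\tau_z|^\alpha/|J(z,f)|^{\alpha-1}=K^{\alpha-1}_{T,p,z_0}(z,f)$ for $p=\alpha/(\alpha-1)$. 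Chaining this with Step~1 yields the claim.

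The principal obstacle lies in the admissibility verification: one must arrange the correct orientation of the pair $(\tau_z,\alpha^{\,\prime}(s))$ in the Hadamard inequality so that it extracts the radial derivative $r^{\,\prime}(s)$ (rather than the angular component), a point that amounts to keeping track of the paper's convention for the ``tangential'' direction used in $K_{T,p,z_0}$ (cf.\ Remark~\ref{rem1}). Additional care is required for the Borel measurability of the fiber sum over the varying set $f^{\,-1}(y)\cap A$ and for the null set $\{J(z,f)=0\}$, both of which are controlled by the finite distortion hypothesis together with $N(f,D)<\infty$.
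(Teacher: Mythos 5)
Your route is genuinely different from the paper's. The paper obtains Theorem~\ref{th1A} by combining Theorem~\ref{thOS4.2} (a \emph{lower} modulus estimate for the images of the family of circles $S(z_0,r)\cap D$, proved by transplanting an admissible metric along the circles and changing variables) with Lemma~\ref{l4.4}, which converts that lower estimate into the capacity bound via the Ziemer--Hesse duality between the modulus of separating sets and the $\alpha$-capacity. You instead attempt a direct Poletsky-type argument: lift the condenser paths, exhibit an explicit admissible metric for $f(\Gamma^*)$, and push the energy back to $D$ by the area formula. Your Steps 1 and 3 are sound: the maximal-lifting argument and the H\"older-plus-area-formula computation are standard and do produce the factor $N^{\alpha-1}(f,D)$ and the exponent $\alpha-1$ on $K_{T,p,z_0}$.

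The gap is in Step 2, precisely at the point you flag but do not resolve. With $u=\tau_z$ and $v=\alpha^{\,\prime}(s)$ the Hadamard inequality yields $|f^{\,\prime}(z)\tau_z|\cdot|f^{\,\prime}(z)\alpha^{\,\prime}(s)|\geqslant |J(z,f)|\cdot|\tau_z\wedge\alpha^{\,\prime}(s)|$, and $|\tau_z\wedge\alpha^{\,\prime}(s)|$ is the component of $\alpha^{\,\prime}(s)$ \emph{orthogonal} to $\tau_z$ (the angular component), whereas $|r^{\,\prime}(s)|=|\langle\alpha^{\,\prime}(s),\tau_z\rangle|$ is the component along $\tau_z$; no choice of orientation identifies the two. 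To extract $|r^{\,\prime}(s)|$ you must take $u=i\tau_z$, but then the metric that becomes admissible is $\rho(y)=\sum\eta(|z-z_0|)\,|f^{\,\prime}(z)(i\tau_z)|/|J(z,f)|$, and Step 3 then produces the dilatation $|f^{\,\prime}(z)(i\tau_z)|^{p}/|J(z,f)|$ (the derivative tangent to the circle, i.e.\ the quantity with $|1-\mu e^{-2i\theta}|$ of Remark~\ref{rem1}) rather than the literal expression in~(\ref{eq19}), which uses the radial direction and carries $|1+\mu e^{-2i\theta}|$; for a non-conformal differential these differ. So as written the admissibility verification does not close for the function stated in the theorem. (The paper's own proof of Theorem~\ref{thOS4.2} makes the same silent identification of $f^{\,\prime}(z)\tau_z$ with the derivative of $f$ along the circle in~(\ref{eq15A}), so the ambiguity is partly inherited from the definition; but your argument hinges entirely on this single inequality and must either correct the direction to $i\tau_z$ or add a separate justification.) A secondary, standard but unaddressed point: your $\rho$ is only admissible for those $\beta\in f(\Gamma^*)$ admitting absolutely continuous lifts along which $f$ is differentiable a.e.\ in arclength, so one must pass to a full-modulus subfamily (the ${\rm ext\,adm}$ device) before concluding.
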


\medskip
\begin{theorem}\label{th2}{\sl\,
Let $f:D\rightarrow {\Bbb C}$ be an open, discrete and closed
mapping with finite distortion, and $1<\alpha\leqslant 2.$ Suppose
that $z_0\in \partial D,$ $p=\frac{\alpha}{\alpha-1}$ and that
$K_{T, p, z_0}(z, f)\in L^{\alpha-1}(D),$ where $K_{T, p, z_0}(z,
f)$ is defined in~(\ref{eq19}). Then, for any
$\varepsilon_0<d_0:=\sup\limits_{z\in D}|z-z_0|$ and a compact set
$C_2\subset D\setminus B(z_0, \varepsilon_0)$ there is
$\varepsilon_1,$ $0<\varepsilon_1<\varepsilon_0,$ such that the
relation
\begin{equation}\label{eq3A}M_{\alpha}(f(\Gamma(C_1, C_2, D)))\leqslant \int\limits_{A(z_0,
\varepsilon, \varepsilon_1)} N^{\alpha-1}(f, D)K^{\alpha-1}_{T, p,
z_0}(z, f) \eta^{\alpha}(|z-z_0|)\,dm(z)
\end{equation}
holds for any $\varepsilon\in (0, \varepsilon_1)$ and any
$C_1\subset \overline{B(z_0, \varepsilon)}\cap D,$ where $A(z_0,
\varepsilon, \varepsilon_1)$ is defined in~(\ref{eq1**A}), and
$\eta: (\varepsilon, \varepsilon_1)\rightarrow [0,\infty]$ is
arbitrary Lebesgue measurable function that satisfies the
relation~(\ref{eq8BC}). }
\end{theorem}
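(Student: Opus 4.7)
The plan is to follow the strategy of Theorem~\ref{th1A}, with two adaptations forced by $z_0\in\partial D$: first, one cannot take a symmetric ball $B(z_0,r_2)\subset D$, so the relevant ``inner'' region is $B(z_0,\varepsilon_1)\cap D$; and second, the conclusion controls the modulus of the image family $f(\Gamma(C_1,C_2,D))$ directly rather than the capacity of an image condenser. For the choice of $\varepsilon_1$: since $C_2\subset D\setminus B(z_0,\varepsilon_0)$ is compact with $\mathrm{dist}(z_0,C_2)\geq\varepsilon_0$, any $\varepsilon_1<\varepsilon_0$ automatically keeps $C_1$ and $C_2$ disjoint and forces every $\gamma\in\Gamma(C_1,C_2,D)$ to cross the annular strip $A(z_0,\varepsilon,\varepsilon_1)\cap D$ for every $\varepsilon<\varepsilon_1$. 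I would additionally shrink $\varepsilon_1$ so that, thanks to $K_{T,p,z_0}^{\alpha-1}\in L^1(D)$ and the absolute continuity of the integral, the sphere-trace regularity of $f$ on $S(z_0,r)\cap D$ holds for almost every $r\in(0,\varepsilon_1)$.

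Given $\eta\colon(\varepsilon,\varepsilon_1)\to[0,\infty]$ satisfying~(\ref{eq8BC}), I would take as candidate admissible function on the image side
\[
\widetilde{\rho}(w)=\sum_{z\in f^{\,-1}(w)}\widetilde{\rho}_{\,0}(z),\qquad \widetilde{\rho}_{\,0}(z):=\eta(|z-z_0|)\,\chi_{A(z_0,\varepsilon,\varepsilon_1)\cap D}(z)\cdot\frac{\bigl|f^{\,\prime}(z)\frac{z-z_0}{|z-z_0|}\bigr|}{|J(z,f)|}
\]
at the points of non-degenerate differentiability and $0$ elsewhere. To show $\widetilde{\rho}\in\mathrm{adm}\,f(\Gamma(C_1,C_2,D))$, I would lift any locally rectifiable $\beta\in f(\Gamma(C_1,C_2,D))$ to a path $\alpha$ joining $C_1$ and $C_2$ in $D$ (possible since $f$ is open, discrete and closed and $N(f,D)<\infty$), restrict to a subpath on which $|\alpha-z_0|$ is monotone, reparametrise by the radius $r=|\alpha-z_0|$, and exploit the cancellation between the factor $|f^{\,\prime}(z)(z-z_0)/|z-z_0||$ in $\widetilde{\rho}_{\,0}$ and the length element to obtain $\int_\beta\widetilde{\rho}\,|dw|\geq\int_\varepsilon^{\varepsilon_1}\eta(r)\,dr\geq 1$.

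For the $L^{\alpha}$-norm, the power-mean inequality applied to the sum of at most $N(f,D)$ terms gives $\widetilde{\rho}^{\,\alpha}(w)\leq N^{\alpha-1}(f,D)\sum_{z\in f^{\,-1}(w)}\widetilde{\rho}_{\,0}^{\,\alpha}(z)$; the area formula for mappings with finite distortion then yields
\[
\int\widetilde{\rho}^{\,\alpha}\,dm(w)\leq N^{\alpha-1}(f,D)\int_{A(z_0,\varepsilon,\varepsilon_1)\cap D}\widetilde{\rho}_{\,0}^{\,\alpha}(z)\,|J(z,f)|\,dm(z).
\]
Using the identity $|f^{\,\prime}(z)(z-z_0)/|z-z_0||^{p}=K_{T,p,z_0}(z,f)\,|J(z,f)|$ built into~(\ref{eq19}), together with $\alpha/p=\alpha-1$, the integrand $\widetilde{\rho}_{\,0}^{\,\alpha}|J|$ simplifies to $\eta^{\alpha}(|z-z_0|)\,K_{T,p,z_0}^{\alpha-1}(z,f)$, which delivers~(\ref{eq3A}) after taking the infimum over $\eta$.

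The delicate step is the admissibility: along a purely radial direction the factor $|f^{\,\prime}(z)(z-z_0)/|z-z_0||$ is precisely the relevant image speed, but for a general path $\alpha$ the tangent $\alpha^{\,\prime}$ has a nonzero tangential component, so one must justify the lower bound $\int_\beta\widetilde{\rho}\,|dw|\geq 1$ by a genuinely two-dimensional argument rather than a pointwise one. This is exactly where the smallness of $\varepsilon_1$ enters: in $B(z_0,\varepsilon_1)\cap D$ one has, up to an $r$-null set, a clean radial foliation by the arcs $S(z_0,r)\cap D$ on which a Fubini/Fuglede-type slicing lets one decompose the tangential contribution, the same mechanism employed in the proofs of Theorems~\ref{th1B} and~\ref{th1A}.
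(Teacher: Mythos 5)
Your $L^{\alpha}$-norm computation is correct: with $p(\alpha-1)=\alpha$ one indeed gets $\widetilde{\rho}_{\,0}^{\,\alpha}(z)|J(z,f)|=\eta^{\alpha}(|z-z_0|)K^{\alpha-1}_{T,p,z_0}(z,f)$, and the power-mean inequality yields the factor $N^{\alpha-1}(f,D)$. The gap is the admissibility step, and it is not a technicality that a ``genuinely two-dimensional argument'' can patch inside your scheme. For $\gamma\in\Gamma(C_1,C_2,D)$ parametrized by $r=|\gamma(t)-z_0|$, your lower bound $\int_{f\circ\gamma}\widetilde{\rho}\,|dw|\geqslant\int_{\varepsilon}^{\varepsilon_1}\eta(r)\,dr$ requires, at almost every point, $\frac{|f^{\,\prime}(z)u_r|}{|J(z,f)|}\cdot|f^{\,\prime}(z)\gamma^{\,\prime}|\geqslant|\gamma^{\,\prime}|$ with $u_r=\frac{z-z_0}{|z-z_0|}$; for an exactly radial $\gamma$ this reads $|f^{\,\prime}(z)u_r|^{2}\geqslant|J(z,f)|$, which fails whenever $u_r$ is close to the direction of minimal stretching, since $\lambda_1^2<\lambda_1\lambda_2$. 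The failure is pointwise and scale-invariant, so shrinking $\varepsilon_1$ cannot repair it, and no single Borel function built from the radial derivative alone is admissible for the joining family. The ``Fubini/Fuglede slicing'' you invoke is also a mischaracterization of the proofs of Theorems~\ref{th1B} and~\ref{th1A}: nowhere do they run a direct admissibility computation on $f(\Gamma(C_1,C_2,D))$.

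What is missing is the duality step that is the whole point of the tangential-dilatation machinery. Theorem~\ref{thOS4.2} proves a \emph{lower} estimate for the modulus of the images of the circle family $S(z_0,r)\cap D$ (the lower $Q$-mapping property with $Q=N(f,D)K_{T,p,z_0}$); on the circles the directional derivative appearing in $K_{T,p,z_0}$ is exactly the speed of the image curve, so the cancellation you want does hold there --- but only there. This lower estimate on the separating family is then converted into the upper estimate~(\ref{eq3A}) on the joining family via the Ziemer and Hesse--Shlyk equalities~(\ref{eq3})--(\ref{eq4}); for a boundary point $z_0$ this conversion is Proposition~\ref{pr5}, which is also where $\varepsilon_1$ really comes from: it is chosen, using closedness and boundedness of $f$, so that $\overline{f(B(z_0,\varepsilon_1)\cap D)}$ stays away from $f(C_2)$ and the sets $\partial f(B(z_0,r)\cap D)$ genuinely separate $f(C_1)$ from $f(C_2)$ --- not from absolute continuity of the integral. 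Without this separating-set/joining-path duality your argument does not close.
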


\section{Preliminaries}

The following important information concerning the capacity of a
pair of sets with respect to a domain can be found in the paper of
W.~Ziemer \cite{Zi$_1$}. Let $G$ be a bounded domain in ${\Bbb C}$
and $C_0 , C_1$ be non-intersecting compact sets, which belong to
the closure $G.$ Let us set $R=G \setminus (C_{0} \cup C_{1})$ and
$R^{\,*}=R \cup C_{0}\cup C_{1} .$ Given $p>1,$ we define the {\it
$p$-capacity of the pair $C_0, C_1$ relative to the closure $G$} by
the equality
$$C_p[G, C_0, C_1] = \inf \int\limits_{R} |\nabla u|^p\, dm(z),$$
where the exact lower bound is taken over all functions $u,$
continuous in $R^{\,*},$ $u\in ACL(R),$ such that $u=1$ on $C_1$ and
$u=0 $ on $C_0.$ The specified functions are called {\it admissible}
for $C_p[G, C_0,C_1].$ We will say that the set {\it $\sigma \subset
{\Bbb C}$ separates $C_0$ from $C_1$ in $R^{\,*},$} if $\sigma \cap
R$ is closed in $R$ and there are disjoint sets $A$ and $B,$ open
relative to $R^{\,*} \setminus \sigma,$ such that $R^{\,*} \setminus
\sigma =A\cup B,$ $C_{0}\subset A$ and $C_{1} \subset B.$ Let
$\Sigma$ denote the class of all sets that separate $C_0$ and $C_1$
in $R^{\,*}.$ Given a number $p^{\prime} = p/(p- 1),$ we define the
quantity
\begin{equation}\label{eq13.4.12}
\widetilde{M_{p^{\prime}}}(\Sigma)=\inf\limits_{\rho\in
\widetilde{\rm adm} \Sigma} \int\limits_{\Bbb
C}\rho^{\,p^{\prime}}dm(z)
\end{equation}
where the inclusion $\rho\in \widetilde{\rm adm}\,\Sigma$ denotes
that $\rho$ is an Borel measurable function in ${\Bbb C}$ such that
\begin{equation} \label{eq13.4.13}
\int\limits_{\sigma \cap R}\rho\, d{\mathcal H}^{1} \geqslant
1\quad\forall\, \sigma \in \Sigma\,. \end{equation}
Note that, by Ziemer's result
\begin{equation}\label{eq3}
\widetilde{M_{p^{\,\prime}}}(\Sigma)=C_p[G , C_0 ,
C_1]^{\,-1/(p-1)}\,,
\end{equation}
see~\cite[theorem~3.13]{Zi$_1$} for $p=2$ and \cite[p.~50]{Zi$_2$}
for $1<p<\infty.$ In addition, by the Hesse's result
\begin{equation}\label{eq4}
M_p(\Gamma(E, F, D))= C_p[D, E, F]\,,
\end{equation}
where $(E \cup F)\cap
\partial D = \varnothing$ (see~\cite[Theorem~5.5]{Hes}). Shlyk
showed that the requirement $(E \cup F)\cap
\partial D = \varnothing$ can be removed,
in other words, equality~(\ref{eq4}) holds for arbitrary disjoint
nonempty compact sets $E, F\subset \overline{D}$
(see~\cite[Theorem~1]{Shl}).

\medskip
We say that some property $P$ is satisfied for {\it $p$-almost all
paths} in the domain $D,$ if this property is satisfied for all
paths in $D,$ except, perhaps, some of their subfamily, $p$-modulus
of which equals to zero. We say that a Lebesgue measurable function
$\rho\colon{\Bbb C}\rightarrow\overline{{\Bbb R}^+}$ is {\it
$p$-extensively admissible} for a family $\Gamma$ of paths $\gamma$
(or dashed lines) in ${\Bbb C},$ abbr. $\rho\in{\rm ext}_p\,{\rm
adm}\,\Gamma,$ if the relation~(\ref{eq8.2.6}) holds for $p$-a.e.
paths (dashed lines) $\gamma$ of $\Gamma.$

\medskip
Let $E$ be a set in ${\Bbb C}$ and let $\gamma :\Delta\rightarrow
{\Bbb C}$ be some path. Denote by $\gamma\cap
E\,=\,\gamma(\Delta)\cap E.$ Let the path $\gamma$ be locally
rectifiable and the length function $l_{\gamma}(t)$ is defined
above. Let us put
$$
l(\gamma\cap E):= {\rm mes}_1\,(E_ {\gamma}), \quad E_ {\gamma} =
l_{\gamma}(\gamma ^{\,-1}(E))\,.
$$
Here, as everywhere above, ${\rm mes}_1\,(A)$ denotes the length
(linear Lebesgue measure) of the set $A\subset {\Bbb R}.$ Note that
$$E_ {\gamma}=\gamma_0^{\,-1}\left(E\right)\,,$$
where $\gamma_0:\Delta _{\gamma}\rightarrow {\Bbb C}$ is natural
parametrization of the path $\gamma,$ and that
$$l(\gamma\cap E) = \int\limits_{\gamma} \chi_E(z)\,|dz|
= \int\limits_{\Delta _{\gamma}} \chi _{E_\gamma }(s)\,dm_1(s)\,.$$

\medskip
The following statement can be found in~\cite[Lemma~8.1]{MRSY}.

\medskip
\begin{proposition}\label{pr2}{\sl\,
Let $E$ be a subset of a domain $D\subset{\Bbb C},$ $n\geqslant 2,$
$p\geqslant 1.$ Then the set $E$ is Lebesgue measurable if and only
if the set $\gamma\cap E$ is measurable for $p$-almost all paths
$\gamma$ in $D.$ Moreover, $m(E)=0$ if and only if
\begin{equation}\label{eq1B}
l(\gamma\cap E)=0
\end{equation}
for $p$-almost all curves $\gamma$ in $D.$}
\end{proposition}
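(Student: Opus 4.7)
The plan is to establish the null-set equivalence in the ``moreover'' clause first and then bootstrap to the general measurability statement. For the implication $m(E)=0\Rightarrow l(\gamma\cap E)=0$ on $p$-a.e.\ $\gamma$, I would replace $E$ by a Borel hull $\widetilde E\supset E$ with $m(\widetilde E)=0$ and, for each $k\in{\Bbb N}$, set $\Gamma_k=\{\gamma:l(\gamma\cap\widetilde E)>1/k\}$. The Borel function $\rho_k=k\chi_{\widetilde E}$ lies in ${\rm adm}\,\Gamma_k$, so $M_p(\Gamma_k)\leqslant k^p\,m(\widetilde E)=0$; countable subadditivity then gives $M_p(\{\gamma:l(\gamma\cap\widetilde E)>0\})=0$, and the inclusion $E\subset\widetilde E$ transfers this to $E$. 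For the converse, fix a closed square $Q$ compactly contained in $D$; the family $\Gamma^h_Q$ of horizontal segments crossing $Q$ has strictly positive $p$-modulus by a direct admissibility computation with functions constant along the horizontal direction, so the hypothesis forces $l(\ell\cap E)=0$ for Lebesgue-a.e.\ horizontal $\ell$ across $Q$. Since the measurability of $E$ is not yet known, the upper-integral form of Tonelli yields $m^*(E\cap Q)=0$, and exhausting $D$ by such squares gives $m(E)=0$.

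For the measurability equivalence itself, the forward direction follows from the decomposition $E=B\cup N$ with $B$ Borel and $m(N)=0$: along any locally rectifiable $\gamma$ with natural parametrization $\gamma_0$, the set $\gamma_0^{-1}(B)$ is Borel, while the null-set direction just proved yields $l(\gamma\cap N)=0$ for $p$-a.e.\ $\gamma$, so $\gamma\cap E=(\gamma\cap B)\cup(\gamma\cap N)$ is measurable on $p$-a.e.\ path. For the converse, let $B_1\supset E$ be a Borel outer hull with $m(B_1)=m^*(E)$; on the $p$-a.e.\ set of paths where $\gamma\cap E$ is measurable, the set $\gamma\cap(B_1\setminus E)$ is then also measurable as a difference. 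If one can show $l(\gamma\cap(B_1\setminus E))=0$ for $p$-a.e.\ $\gamma$, the converse null-set direction gives $m^*(B_1\setminus E)=0$, so $E$ differs from $B_1$ by a Lebesgue null set and is Lebesgue measurable.

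The main obstacle is this last step: translating the slicewise-measurability hypothesis into the path-integral identity $l(\gamma\cap B_1)=l(\gamma\cap E)$ for $p$-a.e.\ $\gamma$. I would handle it by choosing $B_1$ as a $G_\delta$ hull, written as a decreasing intersection of open sets $U_k\supset E$ with $m(U_k)\searrow m^*(E)$, and then applying Fuglede's theorem together with monotone convergence to the path integrals $\int_\gamma\chi_{U_k}\,|dz|$ in order to pin down the desired equality on the relevant $p$-a.e.\ path family; the null-set equivalence then finishes off the 2D measure statement. All the remaining implications are routine Borel-hull bookkeeping that invokes the null-set equivalence as a black box.
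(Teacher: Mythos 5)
The paper does not prove this statement at all: Proposition~2.1 is quoted verbatim from Lemma~8.1 of Martio--Ryazanov--Srebro--Yakubov, so there is no in-paper argument to compare yours against. Judged on its own, your first implication (take a Borel hull $\widetilde E$ of measure zero, use $\rho_k=k\chi_{\widetilde E}$ to show $M_p(\{\gamma: l(\gamma\cap\widetilde E)>1/k\})=0$ and sum over $k$) is the standard Fuglede argument and is correct, as is the forward measurability direction via $E=B\cup N$. The rest of the proposal, however, contains a genuine error and an acknowledged hole that the proposed repair does not close.

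The error is the step ``the upper-integral form of Tonelli yields $m^*(E\cap Q)=0$'' from the vanishing of $l(\ell\cap E)$ on a.e.\ horizontal segment. For non-measurable sets the inequality runs the other way: one only has $\int^* m_1^*(E_y)\,dy\leqslant m^*(E\cap Q)$ (pass to a measurable hull of $E$ and apply Fubini there), and the reverse inequality fails badly. Under CH a Sierpi\'nski-type set $A\subset[0,1]^2$ has every horizontal slice countable --- so $l(\ell_y\cap A)=0$ for \emph{every} $y$ --- yet $m^*(A)=1$. Hence a.e.-vanishing of the slices along a single family of parallel segments cannot give $m^*(E)=0$; Fubini/Tonelli is only available once $E$ is known to be measurable, which is precisely what is being characterized. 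Note that the implication you want would follow trivially \emph{after} the measurability equivalence is proved (if $l(\gamma\cap E)=0$ for $p$-a.e.\ $\gamma$ then $\gamma\cap E$ is in particular measurable for $p$-a.e.\ $\gamma$, so $E$ is measurable and genuine Fubini applies), but your architecture proves the null-set equivalence first and then feeds it into the measurability converse, so this order cannot be salvaged. That measurability converse is itself the real crux, and you leave it open: reducing it to $l(\gamma\cap(B_1\setminus E))=0$ for $p$-a.e.\ $\gamma$ is circular, because a planar $G_\delta$ hull $B_1$ of $E$ need not restrict to a linear measurable hull of $\gamma_0^{-1}(E)$ on individual curves (the same Sierpi\'nski set illustrates this: $B_1\setminus A$ has full horizontal slices), and monotone convergence of $\int_\gamma\chi_{U_k}\,|dz|$ only controls $l(\gamma\cap B_1)$, not $l(\gamma\cap E)$. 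You should consult the actual proof of Lemma~8.1 in the cited monograph rather than rely on this sketch.
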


\medskip
\begin{remark}\label{rem2}
Recall that the family of paths $\Gamma_1$ is called {\it shorter}
compared to the family of paths $\Gamma_2,$ is written
$\Gamma_1<\Gamma_2,$ if for each path $\gamma_2\in \Gamma_2,$
$\gamma_2:(a_2, b_2)\rightarrow {\Bbb C},$ there is a path
$\gamma_1\in \Gamma_1,$ $\gamma_1:(a_1, b_1)\rightarrow {\Bbb C},$
such that $\gamma_1(t)=\gamma_2(t)$ for all $t\in (a_1, b_1)\subset
(a_2, b_2).$ From the definition of the modulus of families of paths
it follows that the condition $\Gamma_1<\Gamma_2$ implies that
$M_p(\Gamma_1)\geqslant M_p(\Gamma_2)$ (see, e.g.,
\cite[Theorem~6.2]{Va}).

From these considerations it also follows that the
Proposition~\ref{pr2} is true also for families of dashed lines, not
only families of paths. Really, let the set $E$ be Lebesgue
measurable, and let $\Gamma_0$ be a family of all dashed lines
$\gamma:\bigcup\limits_{i=1}^{\infty}(a_i, b_i)\rightarrow {\Bbb
C},$ for which the measurability of the set $\gamma\cap E$ is
violated. Then, by the countable additivity of the Lebesgue measure
there is a family $\Gamma_1$ of paths $\gamma_k:(a_k,
b_k)\rightarrow{\Bbb C}$ such that $\gamma_k\cap E$ is not
measurable. However, from the definition of the modulus of families
of paths (see the considerations given above), we have:
$M_p(\Gamma_0)\leqslant M_p(\Gamma_1)=0,$ i.e., $M_p(\Gamma_0).$ The
inverse statement, i.e., that , that the dimensionality of the set
$E$ follows from the dimensionality of $\gamma\cap E$ for almost all
of dashed lines $\gamma,$ is obvious and follows directly from the
Proposition~\ref{pr2}.

The validity of the equality~(\ref{eq1B}) for families of dashed
lines is proved in the same way as the equivalence of the
dimensionality of the set $\gamma\cap E$ to the dimensionality of
the set $E$ itself.
\end{remark}

\medskip
The following statement is proved in~\cite[lemma~4.1]{IS}.

\medskip
\begin{proposition}\label{pr7}
{\sl\, Let $D$  be a domain in ${\Bbb C},$ $p>1,$ and $z_0\in D.$ If
some property $P$ holds for $p$-almost circles $D(z_0, r):=S(z_0,
r)\cap D,$ $r\in (\varepsilon, \varepsilon_0),$ where ''$p$-almost
all'' should be understood in the sense of the modulus of families
of dashed lines lines, in addition, the set
$$E=\{r\in {\Bbb R}\,:\,
P\qquad \text{holds\,\, for}\qquad S(z_0, r)\cap D\}$$
is Lebesgue measurable. Then $P$ also holds for almost all $D(z_0,
r)$ with respect to the parameter $r\in (\varepsilon,
\varepsilon_0).$ On the contrary, if $P$ holds for almost all
circles $D(z_0, r):=S(z_0, r)\cap D$ with respect to the Lebesgue
measure by $r\in (\varepsilon, \varepsilon_0),$ then $P$ also holds
for $p$-almost all of dashed lines $D(z_0, r):=S(z_0, r)\cap D$ in
the sense of $p$-modulus for any $p>1.$}
 \end{proposition}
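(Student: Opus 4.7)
Denote by $E^{\prime}=(\varepsilon,\varepsilon_0)\setminus E$ the set of ``bad'' radii where $P$ fails, and form the associated family of dashed lines $\Gamma_0=\{D(z_0,r):r\in E^{\prime}\}$. The statement reduces to the equivalence
$$M_p(\Gamma_0)=0 \quad\Longleftrightarrow\quad {\rm mes}_1(E^{\prime})=0,$$
which I would establish by two independent sandwich estimates, exchanging line integrals on circles for integrals over the annulus via Fubini in polar coordinates.

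\textbf{From $p$-modulus zero to Lebesgue measure zero.} Fix any Borel $\rho\in{\rm adm}\,\Gamma_0$. Each dashed line $\gamma_r=D(z_0,r)$ lies inside $S(z_0,r)$ and has $\mathcal{H}^{1}$-length at most $2\pi\varepsilon_0$, so H\"older's inequality gives
$$1\leqslant\int_{\gamma_r}\rho\,|dz|\leqslant (2\pi\varepsilon_0)^{(p-1)/p}\left(\int_{\gamma_r}\rho^{p}\,d\mathcal{H}^{1}\right)^{1/p},$$
hence $(2\pi\varepsilon_0)^{1-p}\leqslant\int_{\gamma_r}\rho^{p}\,d\mathcal{H}^{1}$ for every $r\in E^{\prime}$. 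Integrating in $r$ over $E^{\prime}$ (which is Lebesgue measurable by the hypothesis on $E$) and passing to the full annulus via polar Fubini $dm(z)=r\,dr\,d\varphi$ against $d\mathcal{H}^{1}=r\,d\varphi$ on the circles, one obtains
$$(2\pi\varepsilon_0)^{1-p}\,{\rm mes}_1(E^{\prime})\leqslant\int_{\varepsilon}^{\varepsilon_0}\int_{S(z_0,r)}\rho^{p}\,d\mathcal{H}^{1}\,dr=\int_{A(z_0,\varepsilon,\varepsilon_0)}\rho^{p}\,dm(z).$$
Taking the infimum over admissible $\rho$ yields ${\rm mes}_1(E^{\prime})\leqslant(2\pi\varepsilon_0)^{p-1}M_p(\Gamma_0)$, so $M_p(\Gamma_0)=0$ forces ${\rm mes}_1(E^{\prime})=0$.

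\textbf{From Lebesgue measure zero to $p$-modulus zero.} Enclose $E^{\prime}$ in a Borel set $F\subset(\varepsilon,\varepsilon_0)$ with ${\rm mes}_1(F)=0$, and take the Borel weight $\rho(z)=+\infty\cdot\chi_{\{|z-z_0|\in F\}}(z)$. By polar integration, $m(\{z:|z-z_0|\in F\})=\int_{F}2\pi r\,dr=0$, so $\int_{{\Bbb C}}\rho^{p}\,dm=0$ under the usual convention $\infty\cdot 0=0$. On the other hand, for every $r\in E^{\prime}\subset F$ the (positive-length) dashed line $\gamma_r$ is contained in $\{|z-z_0|\in F\}$, hence $\int_{\gamma_r}\rho\,|dz|=+\infty\geqslant 1$; thus $\rho\in{\rm adm}\,\Gamma_0$ and $M_p(\Gamma_0)=0$.

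\textbf{Main obstacle.} The analytic content is small --- a H\"older/Fubini sandwich in one direction and an explicit $\infty$-weight in the other --- but the bookkeeping for \emph{dashed} lines rather than ordinary paths must be handled carefully. One has to check that Lebesgue measurability of the parameter set $E^{\prime}$ transfers to measurability of the rotationally symmetric set $\{z\in{\Bbb C}:|z-z_0|\in E^{\prime}\}$ so that Fubini applies, that the arc-length integrals $r\mapsto\int_{\gamma_r}\rho^{p}\,d\mathcal{H}^{1}$ are measurable, and that the dashed-line version of Proposition~\ref{pr2} noted in Remark~\ref{rem2} is in force so that the $p$-modulus equates with the modulus of the dashed-line family. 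These measurability issues are precisely what the assumption ``$E$ is Lebesgue measurable'' is inserted to take care of.
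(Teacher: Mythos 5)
The paper offers no proof of Proposition~\ref{pr7} at all --- it is imported verbatim from \cite[Lemma~4.1]{IS} --- but your argument is the standard one for statements of this type and is correct: the H\"older--Fubini estimate in polar coordinates gives ${\rm mes}_1(E^{\prime})\leqslant(2\pi\varepsilon_0)^{p-1}M_p(\Gamma_0),$ and the Borel weight $\rho=\infty\cdot\chi_{\{z:\,|z-z_0|\in F\}}$ with a measure-zero Borel hull $F\supset E^{\prime}$ gives the converse. The only point you use implicitly is that each $D(z_0,r)$ is relatively open in $S(z_0,r)$ and hence either empty or of positive $\mathcal{H}^1$-length, so the admissibility of the $\infty$-weight is not spoiled by the convention $\infty\cdot 0=0$; with that remark the proof is complete.
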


\medskip
The following statement can be found in~\cite[lemma~3.7]{Vu}.

\medskip
\begin{proposition}\label{pr1}
{\sl\, Let $f:G\rightarrow {\Bbb C}$ be an open, discrete and closed
mapping, let $\beta:[a, b)\rightarrow f(G)$ be a path and let
$k=N(f, G).$ Then there are paths $\alpha_j:[a, b)\rightarrow G,$
$1\leqslant j\leqslant k,$ such that: (1) $f\circ \alpha_j=\beta,$
(2) ${\rm card}\,\{j: \alpha_j(t)=z\}=i(z, f)$ at $z\in
f^{\,-1}(|\beta|)$ and $t\in [a, b),$ where $i(z, f)$ is the local
topological index of the mapping $f$ at points $z$ and (3)
$\sum\limits_{j=1}^k |\alpha_j|=f^{-1}(|\beta|).$}
\end{proposition}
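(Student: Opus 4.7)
The plan is to build the $k$ lifts $\alpha_j$ in three steps: construct initial lifts near $t=a$ using the local normal form of $f$, extend each lift maximally along $[a,b)$ using closedness, and verify the multiplicity counting at every $t\in[a,b)$.

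For the initial lifts: since $f$ is discrete and closed, the fibre $f^{\,-1}(\beta(a))$ is a finite set $\{z_1,\dots,z_m\}$. Stoilow's factorisation for open discrete planar mappings gives, in a neighbourhood of each $z_i$, a homeomorphism $\varphi_i$ with $\varphi_i(z_i)=0$ and $f(z)=\beta(a)+(\varphi_i(z))^{n_i}$, where $n_i=i(z_i,f)$, so the initial segment of $\beta$ admits exactly $n_i$ distinct local lifts at $z_i$. Collecting these yields $\sum_{i=1}^{m} i(z_i,f)$ initial lifts; a standard degree argument, namely that $y\mapsto\sum_{z\in f^{\,-1}(y)}i(z,f)$ is locally constant on the connected image $f(G)$ and equals $|f^{\,-1}(y^\ast)|=N(f,G)=k$ at a regular point $y^\ast$, shows this sum is exactly $k$. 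Label them $\alpha_1,\dots,\alpha_k$. For the extension: let $[a,t_j)$ be the maximal interval on which $\alpha_j$ is defined with $f\circ\alpha_j=\beta$; if $t_j<b$, then $\beta([a,t_j])$ is compact in $f(G)$ and closedness of $f$ with finite fibres makes $f^{\,-1}(\beta([a,t_j]))$ compact in $G$, so any sequence $t_n\uparrow t_j$ yields a cluster point $z^\ast\in G$ with $f(z^\ast)=\beta(t_j)$. Applying Stoilow's factorisation at $z^\ast$ then extends $\alpha_j$ past $t_j$, a contradiction. Hence $t_j=b$ for every $j$.

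To verify (1)--(3): (1) is built into the construction. For (2), let $E\subset[a,b)$ be the set of parameters $t$ at which the counting identity $|\{j:\alpha_j(t)=z\}|=i(z,f)$ holds for every $z\in f^{\,-1}(\beta(t))$. By construction $a\in E$, and the local Stoilow form around any $z$ in the current fibre together with the uniqueness of $n$-th root branches shows that $E$ is both open and closed in the connected interval $[a,b)$, whence $E=[a,b)$. Property (3) follows from (2): each $|\alpha_j|\subset f^{\,-1}(|\beta|)$ trivially, and for any $z\in f^{\,-1}(|\beta|)$ with $f(z)=\beta(t)$ the identity in (2) forces at least one $\alpha_j(t)=z$, giving the reverse inclusion.

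The principal obstacle is the extension step: preventing premature termination of a maximal lift. Closedness is the decisive hypothesis, because together with finite fibres it makes $f$ proper on compacts and traps the lift in a compact subset of $G$; Stoilow's local form then supplies the room to continue past any supposed obstruction. A secondary difficulty is propagating the multiplicity count to every $t\in[a,b)$, which I handle via the open--closed argument for the set $E$ and the topological invariance of the local index $i(\cdot,f)$ under the branched-cover normal form.
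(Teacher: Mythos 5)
First, a remark on the comparison itself: the paper does not prove this statement at all --- it is quoted verbatim from Vuorinen \cite[Lemma~3.7]{Vu} (the planar analogue of Rickman's complete-lifting theorem, \cite[Ch.~II, Theorem~3.2]{Ri}), so your attempt can only be measured against the standard literature proof. Your ingredients are the right ones: the Stoilow local normal form $w\mapsto w^{n}$, properness of $f$ onto $f(G)$ coming from closedness plus $N(f,G)<\infty$, the constancy of $y\mapsto\sum_{z\in f^{-1}(y)}i(z,f)$ on $f(G)$ (which, note, itself requires closedness --- it is false for merely open discrete maps), and the connected-cluster-set argument to continue a lift trapped in a compact set. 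Up to and including the construction of $k$ individually maximal lifts satisfying (1), the sketch is sound.

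The genuine gap is in the passage to properties (2) and (3), and it is exactly the point the classical proofs spend their effort on. Your lifts are extended \emph{independently}, each to its own maximal interval. Suppose at some time $t_0$ property (2) holds, and some $z\in f^{-1}(\beta(t_0))$ is a branch point with $i(z,f)=n>1$, so that exactly $n$ of your lifts arrive at $z$ at time $t_0$. The local model $w\mapsto w^{n}$ offers $n$ distinct continuations past $t_0$, and (2) for $t>t_0$ requires the $n$ arriving lifts to be distributed one per branch. Nothing in ``extend each lift maximally'' enforces this: all $n$ of them may continue along the same root branch, each being a perfectly good maximal lift, after which $\mathrm{card}\{j:\alpha_j(t)=z'\}$ is $n$ on one local sheet and $0$ on the others, so (2) and (3) fail for $t>t_0$. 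Consequently your set $E$ is not open at branch times, and the invoked ``uniqueness of $n$-th root branches'' is precisely what is absent there. One cannot repair this by fixing the choice at each branch time inductively either, because the set $\{t: f^{-1}(\beta(t))\cap B_f\neq\varnothing\}$ is closed but need not be discrete ($\beta$ may meet the countable set $f(B_f)$ on a Cantor-like set of times). This is why Rickman and Vuorinen do not build the lifts one at a time: they work with \emph{systems} of ``essentially separate'' partial liftings, take a maximal such system by Zorn's lemma, and then use openness, discreteness and closedness to show that a maximal essentially separate system is total and automatically satisfies the counting identity. Without some version of that coordination mechanism, your argument establishes (1) but not (2) or (3).
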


\medskip
We write $\alpha\subset \beta,$ if $\beta|_{J}=\alpha,$ where
$\beta:I\rightarrow {\Bbb C}$ and $J$ is some interval, segment, or
the semiinterval $I.$ Let $f:D\rightarrow {\Bbb C}$ be a discrete
mapping, $\beta:I_0\rightarrow {\Bbb C}$ be a closed rectifiable
path, and $\alpha:I \rightarrow D$ is a path such that $f\circ
\alpha\subset \beta.$ If the length function
$l_{\beta}:I_0\rightarrow [0, l(\beta)]$ is constant on a certain
interval $J\subset I,$ then $\beta$ is constant on $J$ and due to
the discreteness of the mapping $f,$ the path $\alpha$ is also
constant on $J.$ Therefore, there exists a unique function
$\alpha^{\,*}:l_\beta(I)\rightarrow D$ such that $\alpha=\alpha^{\,
*}\circ (l_\beta|_I).$ We say that $\alpha^{\,*}$ is the {\it
$f$-representation of the path $\alpha$ with respect to $\beta.$
Similarly, one can define $f$-representations for arbitrary
rectifiable paths, not only closed ones. }

\medskip
The proof of the following lemma is based on the approach used when
establishing lower estimates of the distortion of the modulus of
families paths, (see, e.g., \cite[Theorem~2.17]{RSY$_1$} and
\cite[Theorem~4.1]{RSY$_2$}; see also~\cite[Theorem~5]{KRSS} and
\cite[Theorem~4]{Sev$_1$}).

\medskip
\begin{lemma}\label{lem1} {\sl\, Let $p>1,$ $D$ be a domain in ${\Bbb C},$
$f:D\rightarrow {\Bbb C}$ be an open, discrete and closed mapping
which is differential almost everywhere and has $N$-property of
Luzin with respect to the Lebesgue measure in ${\Bbb C}.$ Let
$y_0\in \overline{f(D)}\setminus\{\infty\},$ $r_0=\sup\limits_{y\in
f( D)}|y-y_0|>0,$ $0<\varepsilon<\varepsilon_0<r_0.$ Assume that,
$m(f(B_*))=0,$ where $B_*$ is the set of all points $z$ in which $f$
has a total differential and $J(z, f)=0.$ Denote by
$\Sigma_{\varepsilon}$ the family of all sets of the form
\begin{equation}\label{eq12}
\{f^{\,-1}(S(y_0, r))\},\quad r\in (\varepsilon, \varepsilon_0)\,.
\end{equation}
Assume that, for almost all $r\in (\varepsilon, \varepsilon_0),$
$f$-has the $N^{\,-1}$-property on $S(y_0, r)$  with respect to the
Hausdorff  measure ${\mathcal H}^{1}$ on $S(y_0, r)$ and in
addition, any path $\alpha$ with $f\circ \alpha\subset S(y_0, r)$ is
locally rectifiable. Then
\begin{equation}\label{eq13}
\widetilde{M_{p}}(\Sigma_{\varepsilon})\geqslant\frac{1}{N^{p}(f,
D)}\inf\limits_{\rho\in{\rm
ext\,adm}_p\,f(\Sigma_{\varepsilon})}\int\limits_{f(D)\cap A(y_0,
\varepsilon, \varepsilon_0)}\frac{\rho^p(y)}{Q(y)}\,dm(y)\,,
\end{equation}
where $Q$ is defined by the relation
$$Q(y):=K_{CT, p, y_0}(y, f):=\sum\limits_{z\in
f^{\,-1}(y)}\frac{\left(\biggl|(f^{\,\prime}(z))^{\,-1}\frac{y-y_0}{|
y-y_0|}\biggr|\right) ^{p}}{|J(z, f)|}\,.$$ }
\end{lemma}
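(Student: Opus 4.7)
The plan is to show that for any $\rho_*\in\widetilde{\rm adm}\,\Sigma_\varepsilon$ one can produce a function $\rho\in{\rm ext\,adm}_p\,f(\Sigma_\varepsilon)$ for which
$$N^{\,p}(f,D)\int_D\rho_*^p(z)\,dm(z)\ \geqslant\ \int_{f(D)\cap A(y_0,\varepsilon,\varepsilon_0)}\frac{\rho^p(y)}{Q(y)}\,dm(y)\,,$$
so that the desired inequality~(\ref{eq13}) follows by taking infima on each side.

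I would begin by picking $r\in(\varepsilon,\varepsilon_0)$ for which all three hypotheses of the lemma hold (a full-measure set of radii). Proposition~\ref{pr1} applied to $\beta:=S(y_0,r)$ lifts $\beta$ to at most $k=N(f,D)$ paths $\alpha_1,\dots,\alpha_k$ in $D$ with $f\circ\alpha_j=\beta$ and $\sum_j|\alpha_j|=f^{-1}(\beta)$, counted with topological multiplicity $i(z,f)$. Parametrising $\beta$ by arc length $s$, every $\alpha_j$ is locally rectifiable by hypothesis with speed $|(f^{\,\prime}(\alpha_j(s)))^{\,-1}\tau|$, where $\tau$ is the unit tangent to $\beta$. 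The conditions $m(f(B_*))=0$, the $N$- and $N^{\,-1}$-properties of $f$ on $S(y_0,r)$, and a.e.\ differentiability together justify the change of variables
$$\int_{\alpha_j}\rho_*(z)\,d\mathcal{H}^1(z)=\int_{S(y_0,r)}\rho_*(\alpha_j(y))\,|(f^{\,\prime}(\alpha_j(y)))^{\,-1}\tau(y)|\,d\mathcal{H}^1(y)\,.$$
Summing this over $j$, using $\int_{f^{-1}(\beta)}\rho_*\,d\mathcal{H}^1\geqslant 1$ together with the bound $i(z,f)\leqslant N(f,D)$, the natural candidate
$$\rho(y):=N(f,D)\sum_{z\in f^{-1}(y)}\rho_*(z)\,\bigl|(f^{\,\prime}(z))^{\,-1}\tau(y)\bigr|,$$
extended by zero outside $f(D)\cap A(y_0,\varepsilon,\varepsilon_0)$, satisfies $\int_{S(y_0,r)\cap f(D)}\rho\,d\mathcal{H}^1\geqslant 1$ for almost every $r$; Proposition~\ref{pr7} upgrades this to $p$-almost every element of $f(\Sigma_\varepsilon)$, so $\rho\in{\rm ext\,adm}_p\,f(\Sigma_\varepsilon)$.

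For the integral comparison I would first apply the area formula, valid under the $N$-Luzin property, a.e.\ differentiability, and $m(f(B_*))=0$, to write
$$\int_D\rho_*^p\,dm\ =\ \int_{f(D)}\sum_{z\in f^{-1}(y)}\frac{\rho_*^p(z)}{|J(z,f)|}\,dm(y)\,.$$
What then remains is a weighted H\"older inequality on the finite sum defining $\rho(y)/N(f,D)$, with exponents $p,p^{\,\prime}=p/(p-1)$ and weights $|J(z,f)|^{\pm 1/p}$, arranged so that one factor reproduces $\sum_z\rho_*^p/|J|$ and the other is controlled by $Q(y)^{p-1}$. The bridge between the tangential direction $\tau$ appearing in $\rho(y)$ and the radial direction $(y-y_0)/|y-y_0|$ appearing inside $Q(y)$ is the planar area identity
$$|(f^{\,\prime}(z))^{\,-1}\tau|\cdot\bigl|(f^{\,\prime}(z))^{\,-1}\tfrac{y-y_0}{|y-y_0|}\bigr|\sin\theta_z\ =\ |J(z,f)|^{-1}\,,$$
combined with $\sin\theta_z\leqslant 1$; this is the sole place in the proof where the isotropy of $(f^{\,\prime})^{\,-1}$ on unit vectors, up to the factor $|J|^{-1}$, is used.

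The main obstacle is exactly this last step: carrying out the H\"older split so that $Q$ (built from the radial direction) emerges from a sum naturally involving the tangential direction, with only a loss controllable by $N(f,D)$, all while book-keeping the multiplicity $i(z,f)$ and the possibly non-simple nature of the lifts $\alpha_j$. The battery of hypotheses —- $m(f(B_*))=0$, both Luzin properties on the relevant circles, local rectifiability of lifts, and openness/discreteness/closedness of $f$ (which guarantees $N(f,D)<\infty$) —- is precisely what is needed for the lifting, change of variables, and transfer of null sets to be simultaneously legitimate for almost every $r\in(\varepsilon,\varepsilon_0)$.
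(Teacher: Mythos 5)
Your overall architecture is the one the paper itself uses: starting from $\rho_*\in\widetilde{\rm adm}\,\Sigma_{\varepsilon}$, you push it forward to a function on $f(D)$ that is extensively admissible for $f(\Sigma_{\varepsilon})$ (via the liftings of Proposition~\ref{pr1}, the transfer of null sets through Propositions~\ref{pr2} and~\ref{pr7}, and the decomposition of the differentiability set into countably many bi-Lipschitz pieces), and then compare the two energy integrals. The lifting, the admissibility of your candidate $\rho$, and the area-formula identity $\int_D\rho_*^p\,dm=\int_{f(D)}\sum_{z\in f^{\,-1}(y)}\rho_*^p(z)\,|J(z,f)|^{-1}\,dm(y)$ are all sound and parallel the paper. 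The genuine gap is precisely the step you single out as the main obstacle: the weighted H\"older split combined with the planar area identity does not close.

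Concretely, after H\"older with the weights $|J|^{\pm 1/p}$ you are left having to prove
$$\Bigl(\sum_{z\in f^{\,-1}(y)}|J(z,f)|^{\frac{1}{p-1}}\bigl|(f^{\,\prime}(z))^{-1}\tau\bigr|^{\frac{p}{p-1}}\Bigr)^{p-1}\leqslant C\,Q(y)\,,$$
and the only bridge you offer is $|(f^{\,\prime}(z))^{-1}\tau|\cdot|(f^{\,\prime}(z))^{-1}\nu|\,\sin\theta_z=|J(z,f)|^{-1}$ together with $\sin\theta_z\leqslant 1$. That yields $|(f^{\,\prime}(z))^{-1}\tau|\geqslant\bigl(|J(z,f)|\,|(f^{\,\prime}(z))^{-1}\nu|\bigr)^{-1}$, a \emph{lower} bound on the tangential stretch, whereas the display above needs an \emph{upper} bound for it in terms of $|(f^{\,\prime}(z))^{-1}\nu|$; that would require $\sin\theta_z$ bounded away from zero, which fails as soon as $f^{\,\prime}$ is anisotropic. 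The failure is not only angular: already for $f(z)=2z$ (one preimage, $|(f^{\,\prime})^{-1}v|=\frac12|v|$, $|J|=4$, $Q\equiv 2^{-p-2}$) the pointwise bound your scheme needs, namely $\rho^p(y)/(N^pQ(y))\leqslant\sum_{z}\rho_*^p(z)/|J(z,f)|$, reads $4\rho_*^p\leqslant\frac14\rho_*^p$. So no choice of weights in the H\"older step can make the stated $Q$ emerge. The paper sidesteps all of this: its pushed-forward metric is built from the very vector $\frac{y-y_0}{|y-y_0|}$ that occurs in $Q$ (the derivative of the lift is identified with $(f^{\,\prime})^{-1}\frac{y-y_0}{|y-y_0|}$ in (\ref{eq7})--(\ref{eq8})), and it takes a \emph{supremum} over preimages rather than a sum, so that $\widetilde{\rho}_k^{\,p}/Q$ cancels exactly against the $k$-th term of $Q$ and reduces to $\rho_*^p$ times a Jacobian, with no H\"older inequality and no tangential--radial conversion at all. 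To repair your route you must either define $\rho(y)$ with the same directional derivative that appears in $Q$, or replace the sum by the supremum, in which case the H\"older step (and with it the need for any bridge) disappears. (Your $f(z)=2z$-type computation is also a sign that the placement of $|J(z,f)|$ in the definition of $Q$ deserves to be checked against the cancellation actually carried out in (\ref{eq13A}); but that concerns the statement, not your argument.)
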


\medskip
\begin{proof} We denote by $B$ the (Borel)
set of all points $z\in D$, where the mapping $f$ has the total
differential $f^{\,\prime}(z)$ and $J(z, f)\ne 0 .$ By Kirsbraun's
theorem and by the uniqueness of the approximative differential
(see, e.g., \cite[2.10.43 and Theorem~3.1.2]{Fe}) it follows that
the set $B$ is a countable union Borel sets $B_k$, $k=1,2,\ldots,$
such that the mapping $f_k=f|_{B_k}$ are bilipschitz homeomorphisms
(see \cite[Lemma~3.2.2 and Theorems~3.1.4 and 3.1.8]{Fe}). Without
loss of generality, we may assume that the sets $B_k$ are pairwise
disjoint. We also denote by $B_*$ the set of all points $z\in D,$
where $f$ has a total differential, however $J(z, f)=0.$

\medskip
Since the set $B_0:=D\setminus (B\cup B_*)$ has a Lebesgue measure
zero, and $f$ has the $N$-Luzin property, then $m(f(B_0))=0.$ Now,
by Proposition~\ref{pr2} and Remark~\ref{rem2} $l(S_r\cap f(B_0))=0$
for $p$-almost all circles $S_r:=S(y_0,r)\cap f(D)$ centered at the
point $y_0,$ where ''almost all'' should be understood in the sense
of $p$-modulus of families of paths. Similarly, $l(S_r\cap
f(B_*))=0$ for $p$-almost all cuch circles of $S_r.$

\medskip
Let us fix the circle
$$\widetilde{\gamma}(t)=S(y_0,r)=re^{it}+y_0, \quad t\in [0, 2\pi)\,.$$
Then $S_r:=S(y_0,r)\cap f(D)$ is a family of dashed lines
$$\widetilde{\gamma}_i: \bigcup\limits_{i=1}^{\infty} (a_i,
b_i)\rightarrow f(D),\quad (a_i, b_i)\subset {\Bbb R},\quad
i=1,2,\ldots .\,$$
Let us denote this family of dashed lines by $\Gamma.$ Now we fix
some point $\omega^i_0\in f(D)$ which belongs to a dashed line
$\widetilde{\gamma}_i: (a_i, b_i)\rightarrow f(D)$ such that
$\widetilde{\gamma}_i(t_i)=\omega^i_0$ for some $t_i\in (a_i, b_i).$
Note that $N(f, D)<\infty,$ where $N(f, D)$ is defined by the
ratio~(\ref{eq15}) (see~\cite[Lemma~3.3]{MS}).

\medskip
By Proposition~\ref{pr1} the path $\widetilde{\gamma}_i|_{[t_i,
b_i)}$ has $k:=N(f, D)$ total liftings $\gamma^1_{il}\rightarrow D,$
$1\leqslant l\leqslant k,$ such that ${\rm card}\,\{j:
\gamma^1_{il}(t)=z\}=i(z, f)$ at $z\in
f^{\,-1}(|\widetilde{\gamma}_i|_{[t_i, b_i)}|)$ and $t\in [t_i,
b_i),$ where $\sum\limits_{l=1}^k
|\gamma^1_{il}|=f^{\,-1}(|\widetilde{\gamma}_i|_{[t_i, b_i)}|),$
which start at some points $z_i,$ $1\leqslant i\leqslant k.$
Similarly, by Proposition~\ref{pr1} any path
$\widetilde{\gamma}_i|_{(a_i, t_i]}$ has $k:=N(f, D)$ complete
liftings $\gamma^2_{il}\rightarrow D,$ $1\leqslant l\leqslant k,$
such that ${\rm card}\,\{j: \gamma^2_{il}(t)=z\}=i(z, f)$ at $z\in
f^{\,-1}(|\widetilde{\gamma}_i|_{(a_i, t_i]}|)$ and $t\in (a_i,
t_i],$ where $\sum\limits_{l=1}^k
|\gamma^1_{il}|=f^{\,-1}(|\widetilde{\gamma}_i|_{(a_i, t_i]}|),$
starting at some points $z_i,$ $1\leqslant i\leqslant k.$ By uniting
the paths $\gamma^1_{il}$ and $\gamma^2_{il},$ we obtain the paths
$\gamma_{il}:(a_i, b_i)\rightarrow D,$ $l=1,2,\ldots, k,$ which have
the following properties:
$$(1)\qquad f\circ \gamma_{il}=\widetilde{\gamma}_i,\qquad l=1,2,\ldots, k\,,$$
$$(2)\qquad {\rm card}\,\{j: \gamma_{il}(t)=z\}=i(z, f)\qquad z\in
f^{\,-1}(|\widetilde{\gamma}_i|)\,,\qquad t\in (a_i, t_i]\,,$$
$$(3)\qquad \bigcup\limits_{l=1}^k
|\gamma_{il}|=f^{\,-1}(|\widetilde{\gamma}_i|)\,.$$\label{eq11A}
By the condition of the lemma, the path $\gamma^{*}_{il}(s)$ is
locally rectifiable for almost all $r.$ Then also
$\gamma^{*}_{il}(s)$ is locally rectifiable for $p$-almost all
$S_r\in \Gamma$ (see Proposition~\ref{pr7}). For the dashed lines
considered in the proof of Lemma~\ref{lem1} and their families, see
the following Figure and diagram.
\begin{figure}[h]
%\begin{center}
\centering\includegraphics[scale=0.6]{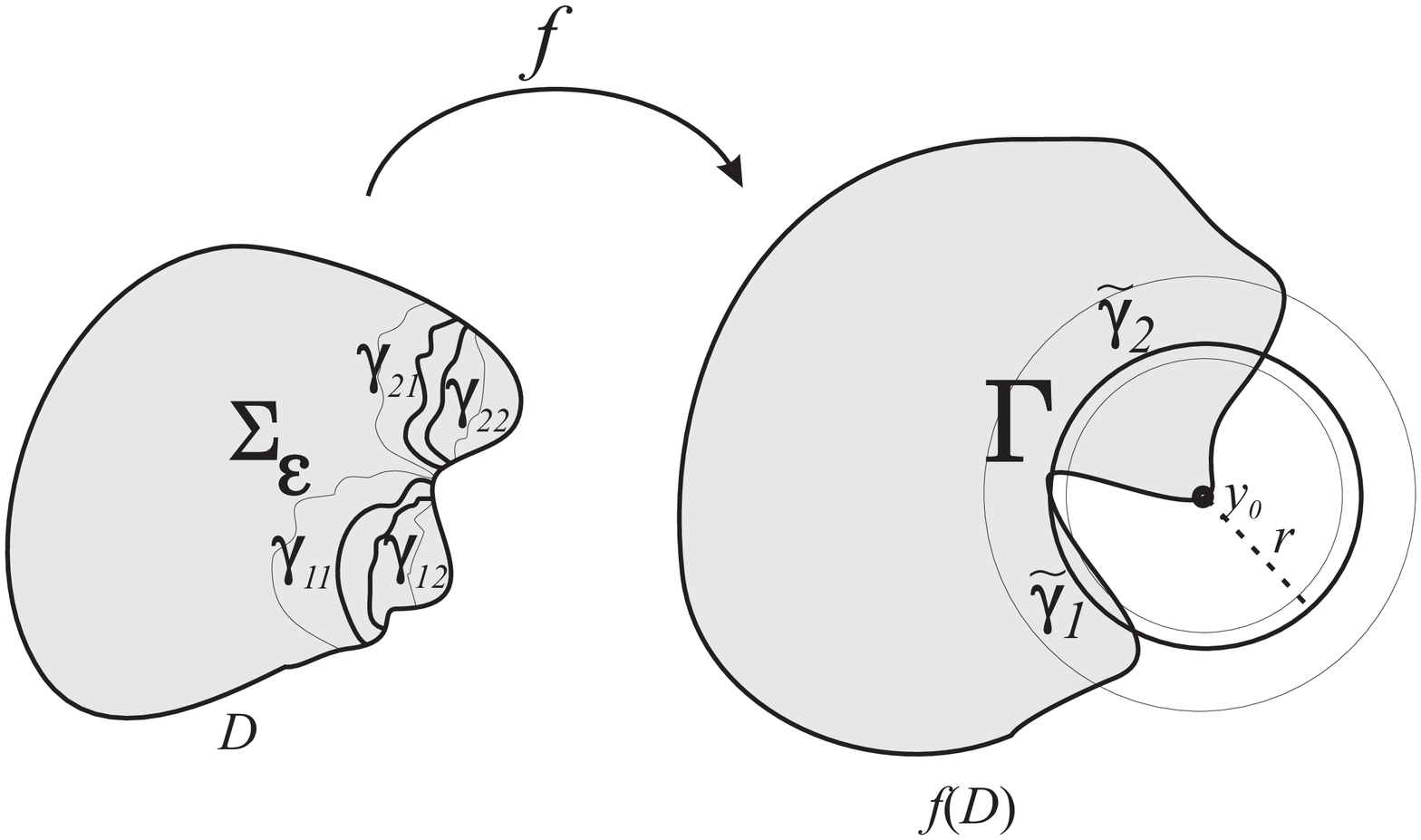}
%\vskip3mm
\caption{To the proof of Lemma~\ref{lem1}} \label{fig2}
%\end{center}
\end{figure}
\begin{figure}[h]
%\begin{center}
\centering\includegraphics[scale=0.7]{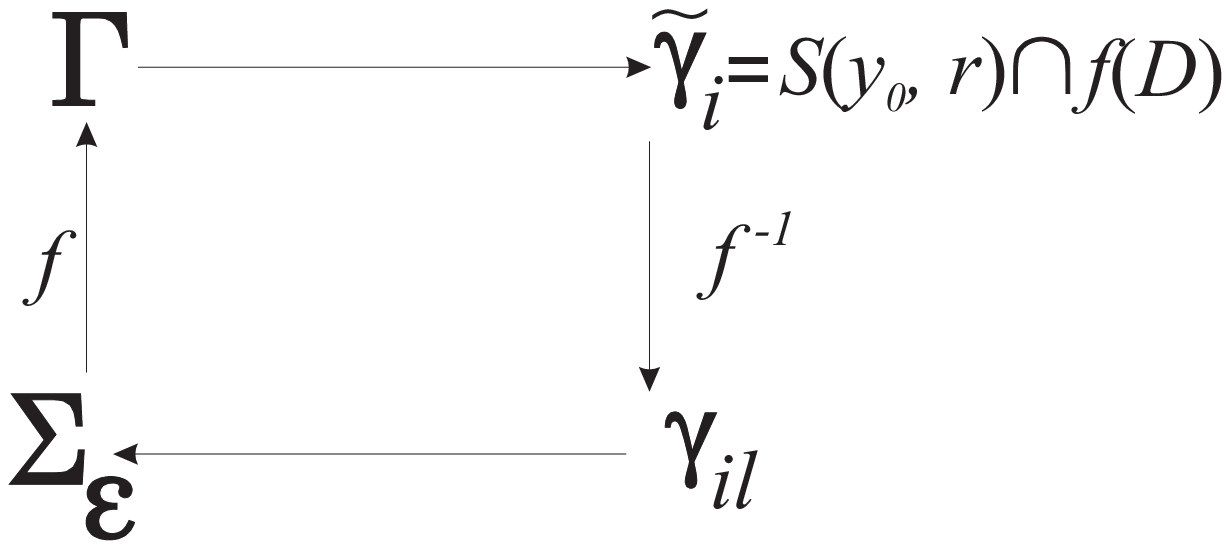}
%\vskip3mm
\caption{To the proof of Lemma~\ref{lem1}} \label{fig3}
%\end{center}
\end{figure}
Denote by $\Gamma_0$ the family of all $S_r,$ for which $l(S_r\cap
f(B_0))>0,$ or $l(S_r\cap f(B_*))>0,$ or the corresponding path
$\gamma^{*}_{il}(s)$ is not locally rectifiable. Now let $S_r\in
\Gamma\setminus \Gamma_0$ and let $\widetilde{\gamma}_i$ be
arbitrary dashed lines in $S_r.$ Let us first consider the case when
this a path is rectifiable for the same $r.$ Let $\rho\in
\widetilde{{\rm adm}}\,\Sigma_{\varepsilon}.$ We put
\begin{equation}\label{eq3C} \widetilde{\rho}(y)=\left \{\begin{array}{rr}
\sup\limits_{z\in
f^{\,-1}(y)}\rho(z)\biggl|(f^{\,\prime}(z))^{\,-1}\frac{y-y_0}{|y
-y_0|}\biggr|\,,
& y\in f(D\setminus B_0)\setminus f(B_*), \\
0, & \text{in\,\,other\,\, cases.}\end{array} \right.\,,
\end{equation}
Note that $\widetilde{\rho}(y)=\sup\limits_{k\in {\Bbb
N}}\widetilde{\rho}_k(y),$ where
\begin{equation}\label{eq3E} \widetilde{\rho}_k(y)=\left \{\begin{array}{rr}
\rho(f_k^{\,-1}(y))\biggl|(f^{\,\prime}(f_k^{\,-1}(y)))^{\,-1}\frac
{y-y_0}{|y-y_0|}\biggr|\,,
& y\in f(D\setminus B_0)\setminus f(B_*), \\
0, & \text{in\,\,other\,\, cases.}\end{array} \right.\,,
\end{equation}
therefore, the function $\widetilde{\rho}(y)$ is Borel, (see, e.g.,
\cite[Theorem~I~(8.5)]{Sa} or \cite[Section~2.3.2]{Fe}).
Denote, as usual, $\widetilde{\gamma}^0_i=\widetilde{\gamma}^0_i(s)$
is normal representation of the path $\widetilde{\gamma}_i,$ namely,
$$\widetilde{\gamma}^0(t)=\widetilde{\gamma}^0_i\circ s_i(t)\,,$$
were $s_i(t)$ denotes the length of $\widetilde{\gamma}$ on $(a_i,
b_i),$ and $s\in (0, l(\widetilde{\gamma}_i))$ and
$l(\widetilde{\gamma}_i)$ denotes the length of
$\widetilde{\gamma}_i.$ More precisely,
\begin{equation}\label{eq6}
\widetilde{\gamma}^0_i(s)=y_0+re^{i(s+s_i)/r}\,, \quad r=const,\quad
s\in (0, l(\widetilde{\gamma}_i))\,,
\end{equation}
where $s_i\in {\Bbb R}$ is some parameter such that
$\widetilde{\gamma}^0_i(s_i)=a_i.$

\medskip
Since $S_r\in \Gamma\setminus \Gamma_0,$ then
$\widetilde{\gamma}^0_i(s)\not\in f(B_0)$ for almost all $s\in (0,
l(\widetilde{\gamma}_i)).$ Then
\begin{equation}\label{eq2A}
\int\limits_{\widetilde{\gamma}_i}{\widetilde{\rho}}(y)\,|dy|=
\int\limits_0^{l(\widetilde{\gamma}_i)}\widetilde{\rho}(\widetilde{\gamma}^0_i(s))
\,ds=
\frac{1}{k}\sum\limits_{l=1}^k\int\limits_0^{l(\widetilde{\gamma}_i)}
\widetilde{\rho}(\widetilde{\gamma}^0_i(s))\,ds\,,
\end{equation}
where $k=N(f, D).$ Let the paths $\gamma_{il}$ satisfying properties
(1), (2) and (3) mentioned above, and let $\gamma^{* }_{il}(s)$ be
their $f$-images, that is,
$f(\gamma^{*}_{il}(s))=\widetilde{\gamma}^0_i(s)$ at all $s\in l(0,
l(\widetilde{\gamma}_i)).$ Since $\widetilde{\gamma}^0_i(s)\not\in
f(B_0)$ for almost all $s\in (0, l(\widetilde{\gamma}_i)),$ it
follows that $\gamma ^{*}_{il}(s)\not\in B_0$ for almost all $s\in
(0, l(\widetilde{\gamma}_i)).$ Therefore, points
$f^{\,-1}(\widetilde{\gamma}^0_i(s))=\{\gamma^{*}_{i1}(s),\gamma^{*}_{i2}
(s)\,,\ldots \, \gamma^{*}_{ik}(s)\}$ are different for almost all
$s\in (0, l(\widetilde{\gamma}_i)).$ Due to the definition
of~$\widetilde{\rho}$ in~(\ref{eq3C}), for each $1\leqslant
l\leqslant k$ we obtain that:
$$\int\limits_0^{l(\widetilde{\gamma}_i)}
\widetilde{\rho}(\widetilde{\gamma}^0_i(s))\,ds=
\int\limits_0^{l(\widetilde{\gamma}_i)} \sup\limits_{z\in
f^{\,-1}(\widetilde{\gamma}^0_i(s))}
\rho(z)\biggl|(f^{\,\prime}(z))^{\,-1}\frac{\widetilde{\gamma}^0_i(s)-y_0}
{|\widetilde{\gamma}^0_i(s)-y_0|}\biggr|\,ds\,\geqslant$$
$$\geqslant
\int\limits_0^{l(\widetilde{\gamma}_i)}\rho(\gamma^{*}_{il}(s))
\biggl|(f^{\,\prime}(\gamma^{*}_{il}(s)))^{\,-1}\frac{
\widetilde{\gamma}^0_i(s)-y_0}{|\widetilde{\gamma}^0_i(s)-y_0|}\biggr|\,ds=$$
\begin{equation}\label{eq3D}
= \int\limits_0^{l(\widetilde{\gamma}_i)}\rho(\gamma^{*}_{il}(s))
\biggl|(f^{\,\prime}(\gamma^{*}_{il}(s)))^{\,-1}(
e^{i(s+s_i)/r})\biggr|\,ds\,.
\end{equation}
Since $f(\gamma^{*}_{il}(s))=y_0+re^{i(s+s_i)/r},$ in addition,
$\gamma^{*}_{il}( s)\in D\setminus (B_0\cup B_*)$ for $p$-almost all
$\widetilde{\gamma}_i\in \Gamma$ and for almost all $s\in l(0,
l(\widetilde{\gamma}_i)),$ by the differentiability theorem for
superposition of mappings
\begin{equation}\label{eq7}
f^{\,\prime}(\gamma^{*}_{il}(s))\gamma^{*\,\prime}_{il}(s)
=e^{i(s+s_i)/r}\,.
\end{equation}
By the relation~(\ref{eq7}), applying the matrix
$(f^{\,\prime}(\gamma^{*}_{il}(s)))^{\,-1}$ to both parts, we will
have that
\begin{equation}\label{eq8}
\gamma^{*\,\prime}_{il}(s)
=(f^{\,\prime}(\gamma^{*}_{il}(s)))^{\,-1}e^{i(s+s_i)/r}\,.
\end{equation}
Therefore, by~(\ref{eq3D}) and~(\ref{eq8}) we obtain that
\begin{equation}\label{eq11}
\int\limits_0^{l(\widetilde{\gamma}_i)}
\widetilde{\rho}(\widetilde{\gamma}^0_i(s))\,ds\geqslant
\int\limits_0^{l(\widetilde{\gamma}_i)}\rho(\gamma^{*}_{il}(s))|\gamma^{*\,\prime}_{il}(
s) | \,ds\,.
\end{equation}
In this case, by the relation~(\ref{eq2A}) it follows that
\begin{equation}\label{eq2B}
\int\limits_{\widetilde{\gamma}_i}{\widetilde{\rho}}(y)\,|dy|
\geqslant
\frac{1}{k}\sum\limits_{l=1}^k\int\limits_0^{l(\widetilde{\gamma}_i)}
\rho(\gamma^{*}_{il}(s))|\gamma^{*\,\prime}_{il}(s)| \,ds\,.
\end{equation}
Let $s_*=s_*(s)$ denote the length of the path $\gamma^{*}_{il}$ on
the segment $[0, s].$ Note that the function $s_*=s_*(s)$ is
absolutely continuous for almost all dashed lines
$\widetilde{\gamma}_i.$ Indeed, let $E\subset [0,
l(\widetilde{\gamma}_i)]$ has a zero Lebesgue measure, i.e.,
$m_1(E)=0.$ Then by~\cite[theorem~3.2.5]{Fe} the set
$|\widetilde{\gamma}_i(E)|=\{y\in {\Bbb C}: \exists\, s\in E:
\widetilde{\gamma}_i(s)=y\}$ has ${\mathcal H}^{1}$-measure is zero.
Since, by the condition of the lemma, the mapping $f$ has
$N^{\,-1}$-property on almost all spheres $S(y_0, r),$ the set
$f^{\,-1}(|\widetilde{\gamma}_i(E))|$ is also of ${\mathcal
H}^{1}$-measure is zero. However, in this case,
by~\cite[theorem~3.2.5]{Fe} the set $E_*:= \{s\in [0,
l(\gamma^{*}_{il})]: \gamma^{*}_ {il}(s)\in
f^{\,-1}(|\widetilde{\gamma}_i(E))\}$ has the linear measure zero.
Note that $E_*=s_*(E),$ therefore, the function $s_*=s_*(s)$ has
Luzin $N$-property. Then, since by the assumption the path
$\gamma^{*}_{il}(s)$ is rectifiable for almost all $r,$ according
to~\cite[Theorem~2.10.13]{Fe} the function $s_*=s_*(s)$ is
absolutely continuous for almost all dashed lines
$\widetilde{\gamma}_i,$ which had to be proved.

\medskip
In this case, the path $\gamma^{\,*}_{il}(s)$ is also absolutely
continuous, because $\gamma^{\,*}_{il}(s)=\gamma^
{\,0}_{il}(s_*(s)),$ and the path $\gamma^{\,0}$ is absolutely
continuous (and even Lipschitz) with respect to its natural
parameter $s_*.$ Then, by~\cite[theorem~3.2.5]{Fe}
\begin{equation}\label{eq9A}
\int\limits_0^{l(\widetilde{\gamma}_i)}
\rho(\gamma^{*}_{il}(s))|\gamma^{*\,\prime}_{il}(s)|
\,ds=\int\limits_{|\gamma^{*}_{il}|}\rho(z)N(\gamma^{*}_{il}, [0,
l(\widetilde{\gamma}_i)], z)\,d{\mathcal H}^{1}(z)\geqslant
\int\limits_{|\gamma^{*}_{il}|}\rho(z)\,d{\mathcal H}^{1}(z)\,.
\end{equation}
Then by~(\ref{eq2B}) and~(\ref{eq9A}) it follows that
\begin{equation}\label{eq10A}
\int\limits_{\widetilde{\gamma}_i}{\widetilde{\rho}}(y)\,|dy|
\geqslant
\frac{1}{k}\sum\limits_{l=1}^k\int\limits_{|\gamma^{*}_{il}|}\rho(z)\,d{\mathcal
H}^{1}(z)\,.
\end{equation}
We sum the ratio~(\ref{eq10A}) over $i=1,2,\ldots .$ Due to the
relation~(3) on the page~\pageref{eq11A} and since $\rho\in
\widetilde{{\rm adm}}\,\Sigma_{\varepsilon},$ we have that
\begin{equation}\label{eq12A}
\int\limits_{S_r}{\widetilde{\rho}}(y)\,|dy|=
\sum\limits_{i=1}^{\infty}
\int\limits_{\widetilde{\gamma}_i}{\widetilde{\rho}}(y)\,|dy|
\geqslant \frac{1}{k}\sum\limits_{i=1}^{\infty}
\sum\limits_{l=1}^k\int\limits_{|\gamma^{*}_{il}|}\rho(z)\,d{\mathcal
H}^{1}(z)\geqslant 1/k
\end{equation}
for $p$-almost all $S_r\in \Gamma,$ that is,
$k\cdot\widetilde{\rho}\in{\rm ext\,adm}_p\,\Gamma.$

\medskip
Recall that the relation~(\ref{eq12A}) has been proved under the
condition that all paths $\gamma^{*}_{il}$ are rectifiable. The
general case, when they are only locally rectifiable, comes
from~(\ref{eq12A}) by taking any rectifiable subpaths
$\widetilde{\widetilde{\gamma}_i}\subset \widetilde{\gamma}_i$
instead of $\widetilde{\gamma}_i.$ As a result of considerations
similar to those carried out above, we will obtain the relation
\begin{equation}\label{eq12B}
\sum\limits_{i=1}^{\infty}
\int\limits_{\widetilde{\widetilde{\gamma}_i}}{\widetilde{\rho}}(y)\,|dy|
\geqslant \frac{1}{k}\sum\limits_{i=1}^{\infty}
\sum\limits_{l=1}^k\int\limits_{|\gamma^{**}_{il}|}\rho(z)\,d{\mathcal
H}^{1}(z)\,,
\end{equation}
where $f\circ\gamma^{**}_{il}={\widetilde{\widetilde{\gamma}_i}}^0,$
$\gamma^{**}_{il}\subset \gamma^{*}_{il}.$ In this case, we are left
just go to $\sup$ over all $\widetilde{\widetilde{\gamma}_i}\subset
\widetilde{\gamma}_i.$ to obtain~(\ref{eq12A}) in the
ratio~(\ref{eq12B}).

\medskip
For the matrix $f^{\,\prime}(z),$ let $\lambda_1(z)\leqslant
\lambda_2(z)$ be the so-called principal numbers, i.e.,
$f^{\,\prime}(z)e_i=\lambda_i(z)\widetilde{e_i}(z)$ for some
orthonormal systems of vectors $e_1, e_2$ and
$\widetilde{e_1},\widetilde{e_2}$ (see, e.g., \cite[Lemma~4.2,
$\S\,4,$ Ch.~I]{Re}). Then $|J(z, f)|=\lambda_1(z)\cdot\lambda_2(z)$
(see relation~(4.5) again), so that $\lambda_1(z)>0$ for each $z\in
D\setminus (B_0\cup B_*).$ In this case,
$\frac{1}{\lambda_2(z)}\leqslant\frac{1}{\lambda_1(z)}$ are the main
numbers for the inverse mapping $(f^{\,\prime}(z))^{\,-1}.$ Since
$m(f(B_0)\cup f(B_*))=0,$ for almost all $y\in f(B_k)$ we have that
\begin{equation}\label{eq12C}
\left(\biggl|(f^{\,\prime}(f^{\,-1}_k(y)))^{\,-1}\frac{y-y_0}{|y-y_0|
}\biggr|\right) \geqslant \frac{1}{\lambda_2(f^{\,-1}_k(y))}>0\,.
\end{equation}
Since $\tilde{\rho}^p(y)=\sup\limits_{k\in {\Bbb
N}}\tilde{\rho}^p_k(y)\leqslant\sum\limits_{k=1}^{\infty}\tilde{\rho}^p_k(y)$
and $m(f(B_*))=m(f(B_0))=0,$ by~(\ref{eq12C}) we obtain that
$$\int\limits_{f(D)}\frac{\tilde{\rho}^p(y)}{Q(y)}\,dm(y)\leqslant
\sum\limits_{k=1}^{\infty}\int\limits_{f(B_k)}
\frac{\tilde{\rho}^p_k(y)}{Q(y)}\,dm(y)\leqslant$$
\begin{equation}\label{eq13A}
\leqslant \sum\limits_{k=1}^{\infty}\int\limits_{f(B_k)}
\frac{\rho^p(f^{\,-1}_k(y))\left(
\biggl|(f^{\,\prime}(f^{\,-1}_k(y)))^{\,-1}\frac{y-y_0}{|y-y_0|}\biggr
|\right) ^{p}|J(y, f^{\,-1}_k)|}{\left(
\biggl|(f^{\,\prime}(f^{\,-1}_k(y)))^{\,-1}\frac{y-y_0}{|y-y_0|}\biggr
|\right) ^{p}}\,dm(y)\,=\end{equation}
$$=\sum\limits_{k=1}^{\infty}\int\limits_{f(B_k)}
\rho^p(f^{\,-1}_k(y))|J(y, f^{\,-1}_k)|\,dm(y)\,.$$
Using the change of variables on each $B_k$, $k=1,2,\ldots$, see,
e.g., \cite[theorem~3.2.5]{Fe}, we obtain that
\begin{equation}\label{eq14B}
\int\limits_{f(B_k)} \rho^p(f^{\,-1}_k(y))|J(y,
f^{\,-1}_k)|\,dm(y)=\int\limits_{B_k} \rho^p(z)\,dm(z)\,.
\end{equation}
It follows by~(\ref{eq13A}) and~(\ref{eq14B}) that
\begin{equation}\label{eq4A}
\int\limits_{f(D)}\frac{{\widetilde{\rho}}^p(y)}{Q(y)}\,dm(y)
\leqslant
\sum\limits_{k=1}^{\infty}\int\limits_{B_k}\rho^p(z)\,dm(z)\,.
\end{equation}
Summing~(\ref{eq4A}) over $k=1,2,\ldots$ and using the countable
additivity of the Lebesgue integral (see, e.g.,
\cite[theorem~I.12.3]{Sa}), we obtain that
\begin{equation}\label{eq5A}
\int\limits_{f(D)}\frac{1}{Q(y)}{\widetilde{\rho}}^p(y)\cdot\,dm(y)
\leqslant\int\limits_D\rho^p(z)\,dm(z)\,.
\end{equation}
Taking $\inf$ in~(\ref{eq5A}) by all functions $\rho \in
\widetilde{{\rm adm}}\,\Sigma_{\varepsilon},$ we obtain that
$$\int\limits_{f(D)}\frac{1}{Q(y)}{\widetilde{\rho}}^p(y)\cdot\,dm(y)
\leqslant\widetilde{M_{p}}(\Sigma_{\varepsilon})\,.$$
Multiplying the last relation on $N^{p}(f, D),$ we obtain that
$$\int\limits_{f(D)}\frac{N^{p}(f, D)}{Q(y)}
{\widetilde{\rho}}^p(y)\cdot\,dm(y) \leqslant N^{p}(f,
D)\cdot\widetilde{M_{p}}(\Sigma_{\varepsilon})\,.$$
Denote by $\widetilde{\rho}_1(y):=N(f, D)\cdot \widetilde{\rho}(y),$
we obtain from the last relation that
\begin{equation}\label{eq10C}
\int\limits_{f(D)} \frac{\widetilde{\rho_1^p(y)}}{Q(y)}\,dm(y)
\leqslant N^{p}(f, D)\cdot\widetilde{M_{p}(\Sigma_{\varepsilon})}\,.
\end{equation}
Since by the proving above $\widetilde{\rho}_1(y)=N(f,
D)\tilde{\rho}\in {\rm ext\,adm}_p\,f(\Sigma_{\varepsilon}),$ it
follows from~(\ref{eq10C}) that~(\ref{eq13}) holds, as required. The
lemma is proved.~$\Box$
\end{proof}

\section{Proof of Theorem~\ref{th1}} Let $Q_*:D\rightarrow
[0,\infty]$ be a Lebesgue measurable function. Denote by
$q_{x_0}(r)$ the integral average of $Q_*(x)$ under the sphere
$|x-x_0|=r,$
\begin{equation}\label{eq3.1A}
q_{x_0}(r):=\frac{1}{2\pi
}\int\limits_0^{2\pi}Q_*(x_0+re^{i\theta})\,d\theta\,.
\end{equation}
Below we also assume that the following standard relations hold:
$a/\infty=0$ for $a\ne\infty,$ $a/0=\infty$ for $a> 0$ and $0\cdot
\infty=0$ (see, e.g., \cite[$\S\,3,$ section~I]{Sa}). The following
conclusion was obtained by V.~Ryazanov together with the author in
the case $p=2,$ see, e.g.,~\cite[Lemma~7.4]{MRSY} or
\cite[Lemma~2.2]{RS}. In the case of an arbitrary $p> 1,$ see, for
example, \cite[Lemma~2]{SalSev$_2$}.

\medskip
\begin{proposition}\label{pr3}
{\sl\, Let $p>1,$ $n\geqslant 2,$ $x_0 \in {\Bbb C},$ $r_1, r_2\in
{\Bbb R},$ $r_1, r_2>0,$ and let $Q_*(x)$ be a Lebesgue measurable
function, $Q_*:{\Bbb C}\rightarrow [0, \infty],$ $Q_*\in L_{\rm
loc}^1({\Bbb C}).$ We put
$$
I=I(x_0,r_1,r_2)=\int\limits_{r_1}^{r_2}
\frac{dr}{r^{\frac{1}{p-1}}q_{x_0}^{\frac{1}{p-1}}(r)}\,,
$$
and let $q_{x_0}(r)$ be defined by~(\ref{eq3.1A}). Then
\begin{equation}\label{eq10D}
\frac{2\pi}{I^{p-1}}\leqslant \int\limits_A Q_*(x)\cdot
\eta^p(|x-x_0|)\,dm(x)
\end{equation}
for any Lebesgue measurable function $\eta :(r_1,r_2)\rightarrow
[0,\infty]$ such that
\begin{equation}\label{eq10B} \int\limits_{r_1}^{r_2}\eta(r)\,dr=1\,,
\end{equation}
where $A=A(x_0, r_1,r_2)$ is defined in~(\ref{eq1**A}).}
\end{proposition}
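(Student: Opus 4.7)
The plan is to convert the right-hand side of~(\ref{eq10D}) into a one-dimensional integral via polar coordinates, then extract the desired inequality from Hölder's inequality applied to the constraint~(\ref{eq10B}).

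First, I would pass to polar coordinates $x = x_0 + r e^{i\theta}$ on the annulus $A = A(x_0, r_1, r_2)$ and use Fubini's theorem together with the definition~(\ref{eq3.1A}) of $q_{x_0}(r)$ to rewrite
\begin{equation*}
\int\limits_A Q_*(x)\,\eta^p(|x-x_0|)\,dm(x) \;=\; \int\limits_{r_1}^{r_2} \eta^p(r)\,r \left(\int\limits_0^{2\pi} Q_*(x_0 + r e^{i\theta})\,d\theta\right) dr \;=\; 2\pi \int\limits_{r_1}^{r_2} r\, q_{x_0}(r)\, \eta^p(r)\,dr.
\end{equation*}
Thus the inequality~(\ref{eq10D}) reduces to showing
$\int_{r_1}^{r_2} r\,q_{x_0}(r)\,\eta^p(r)\,dr \geqslant 1/I^{p-1}$.

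Next I would apply Hölder's inequality with conjugate exponents $p$ and $p/(p-1)$ to the normalization~(\ref{eq10B}). Writing $1 = \eta(r) = \bigl(\eta(r)\,(r\,q_{x_0}(r))^{1/p}\bigr)\cdot (r\,q_{x_0}(r))^{-1/p}$ and integrating, Hölder's inequality gives
\begin{equation*}
1 \;=\; \int\limits_{r_1}^{r_2} \eta(r)\,dr \;\leqslant\; \left(\int\limits_{r_1}^{r_2} \eta^p(r)\,r\,q_{x_0}(r)\,dr\right)^{1/p} \left(\int\limits_{r_1}^{r_2} \frac{dr}{r^{1/(p-1)} q_{x_0}^{1/(p-1)}(r)}\right)^{(p-1)/p}.
\end{equation*}
The second factor on the right is precisely $I^{(p-1)/p}$. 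Raising to the $p$-th power and rearranging yields the desired lower bound, and multiplication by $2\pi$ completes the proof.

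There is no substantive obstacle in the main argument, which is a textbook Hölder estimate; the only care required concerns the degenerate cases where the integrand may vanish or blow up. Specifically, one should verify, using the conventions $a/\infty = 0$, $a/0 = \infty$ for $a>0$, and $0 \cdot \infty = 0$ stated just before the proposition, that the Hölder step remains valid on subsets where $q_{x_0}(r) \in \{0, \infty\}$. If $I = \infty$ the claim is vacuous (the right-hand side of~(\ref{eq10D}) dominates $0$), while if $I = 0$ the right-hand side must equal $+\infty$, since the set $\{r : q_{x_0}(r) = \infty\}$ either has positive measure---forcing the integral of $\eta^p r\,q_{x_0}$ to be infinite once $\eta > 0$ there---or one argues by a truncation $Q_*^{(N)} = \min\{Q_*, N\}$ and lets $N \to \infty$ using monotone convergence. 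These measurability and degenerate-value checks are the only technicalities worth isolating.
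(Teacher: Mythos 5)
Your proof is correct and is essentially the standard argument: pass to polar coordinates to rewrite the right-hand side as $2\pi\int_{r_1}^{r_2} r\,q_{x_0}(r)\,\eta^p(r)\,dr$, then apply H\"older's inequality with exponents $p$ and $p/(p-1)$ to the splitting $\eta=\bigl(\eta\,(r q_{x_0})^{1/p}\bigr)(r q_{x_0})^{-1/p}$, the degenerate sets $\{q_{x_0}=0\}$ and $\{q_{x_0}=\infty\}$ reducing, as you note, to cases where one side of~(\ref{eq10D}) is trivially $0$ or $+\infty$. The paper itself gives no proof but refers to Lemma~7.4 of \cite{MRSY} and Lemma~2 of \cite{SalSev$_2$}, where precisely this H\"older computation is carried out.
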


\medskip
\begin{remark}\label{rem3}
Note that, if~(\ref{eq10D}) holds for any function $\eta$ with a
condition (\ref{eq10B}), then the same relationship holds for any
function $\eta$ with the condition~(\ref{eqA2}). Indeed, let $\eta$
be a nonnegative Lebesgue function that satisfies the condition
(\ref{eqA2}). If $J:=\int\limits_{r_1}^{r_2}\eta(t)\,dt<\infty,$
then we put $\eta_0:=\eta/J.$ Obviously, the function $\eta_0$
satisfies condition~(\ref{eq10B}). Then the relation~(\ref{eq10D})
gives that
$$\frac{2\pi}{I^{p-1}}\leqslant \frac{1}{J^p}\int\limits_A Q_*(x)\cdot
\eta^p(|x-x_0|)\,dm(x)\leqslant \int\limits_A Q_*(x)\cdot
\eta^p(|x-x_0|)\,dm(x)$$
because $J\geqslant 1.$ Let now $J=\infty.$ Then, by
\cite[Theorem~I.7.4]{Sa}, a function $\eta$ is a limit of a
nondecreasing nonnegative sequence of simple functions $\eta_m,$
$m=1,2,\ldots .$ Set
$J_m:=\int\limits_{r_1}^{r_2}\eta_m(t)\,dt<\infty$ and
$w_m(t):=\eta_m(t)/J_m.$ Then, it follows from~(\ref{eq10B}) that
\begin{equation}\label{eq11B}
\frac{2\pi}{I^{p-1}}\leqslant \frac{1}{J_m^p}\int\limits_A
Q_*(x)\cdot \eta_m^p(|x-x_0|)\,dm(x)\leqslant \int\limits_A
Q_*(x)\cdot \eta_m^p(|x-x_0|)\,dm(x)\,,
\end{equation}
because $J_m\rightarrow J=\infty$ as $m\rightarrow\infty$
(see~\cite[Lemma~I.11.6]{Sa}). Thus, $J_m\geqslant 1$ for
sufficiently large $m\in {\Bbb N}.$ Observe that, a functional
sequence $\psi_m(x)=Q_*(x)\cdot \eta_m^p(|x-x_0|),$ $m=1,2\ldots ,$
is nonnegative, monotone increasing and converges to a function
$\psi(x):=Q_*(x)\cdot \eta^p(|x-x_0|)$ almost everywhere. By the
Lebesgue theorem on the monotone convergence
(see~\cite[Theorem~I.12.6]{Sa}), it is possible to go to the limit
on the right side of the inequality~(\ref{eq11B}), which gives us
the desired inequality~(\ref{eq10D}).
\end{remark}

\medskip
{\it Proof of Theorem~\ref{th1}}. Fix $y_0\in
\overline{f(D)}\setminus\{\infty\},$
$0<r_1<r_2<r_0=\sup\limits_{y\in f(D)}|y-y_0|,$ $C_1\subset B(y_0,
r_1)\cap f(D)$ and $C_2\subset f(D)\setminus B(y_0, r_2).$ Set
$$C_0:=\overline{f^{\,-1}(C_1)}\,,\quad
C^*_0:=\overline{f^{\,-1}(C_2)}$$ (see Figure~\ref{fig1}).
\begin{figure}[h]
%\begin{center}
\centering\includegraphics[scale=0.55]{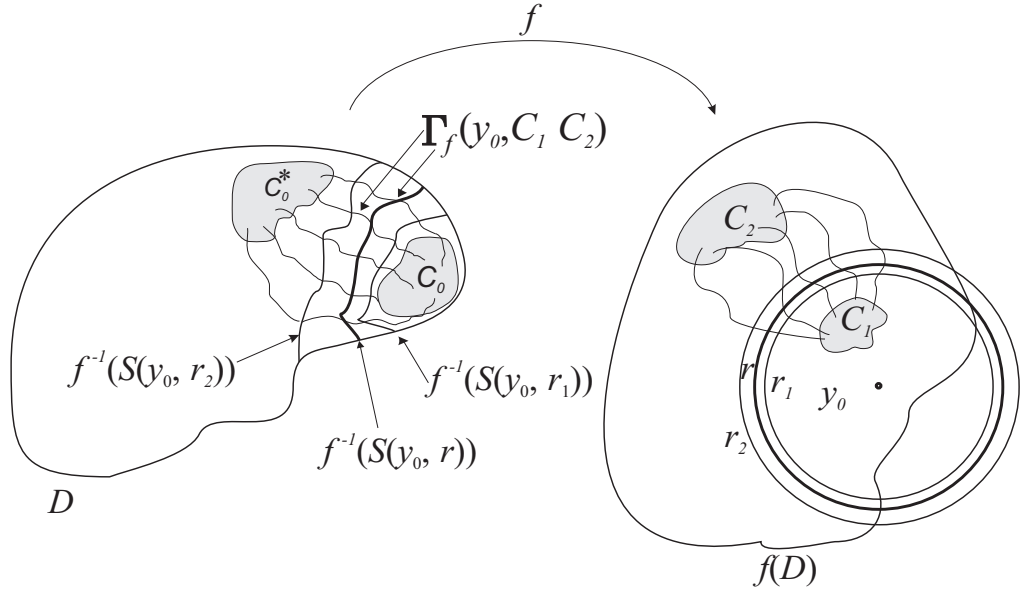}
%\vskip3mm
\caption{To the proof of Theorem~\ref{th1}} \label{fig1}
%\end{center}
\end{figure}
Observe that $C_0$ and $C_1$ are disjoint compact sets in $D,$ see
\cite[Theorem~3.3]{Vu}. Besides that, $C_1$ and $C_2$ are non empty
by the choose of $r_0,$ $r_1$ and $r_2.$

\medskip Let us to show that a set $\sigma_r:=f^{\,-1}(S(y_0, r))$
separates $C_0$ from $C^{\,*}_0$ in $D$ for any $r\in (r_1, r_2).$
Indeed, $\sigma_r$ is closed in $D$ as a preimage of a closed set
$S(y_0, r)$ under the continuous mapping~$f$ (see, e.g.,
\cite[Theorem~1.IV.13, Ch.~1]{Ku}). In particular, $\sigma_r$ is
also closed with respect to $R:=D\setminus (C_0\cup C^{\,*}_0).$ We
put
$$A:=f^{\,-1}(B(y_0,
r))$$ and
$$B:=D\setminus \overline{f^{\,-1}(B(y_0, r))}\,.$$
Observe that, $A$ and $B$ are not empty by the choice of $r_0,$
$r_1,$ $r_2$ and $r.$ Since $f$ is continuous, $f^{\,-1}(B(y_0, r))$
and $D\setminus \overline{f^{\,-1}(B(y_0, r))}$ are open in $D.$ In
other words, $A$ and $B$ are open in
$$R^{\,*}:=R\cup C_0\cup C_1=D\,.$$
Note that $A\cap B=\varnothing,$ and $R^{\,*}\setminus
\sigma_r=A\cup B.$ Let $\Sigma_{C_0, C^{\,*}_0}$ be the family of
all sets separating $C_0$ and $C^{\,*}_0$ in $R^{\,*}.$ In this
case, by the equations of Ziemer and Hesse, see (\ref{eq3})
and~(\ref{eq4}), respectively, we obtain that
\begin{equation}\label{eq7A}
M_{\alpha}(\Gamma_f(y_0, C_1,
C_2))=\left(\widetilde{M}_{p}(\Sigma_{r_1,
r_2})\right)^{1-\alpha}\,,
\end{equation}
where $\alpha=\frac{p}{p-1}.$
Then by Lemma~\ref{lem1} and by the relation~(\ref{eq7A}), we obtain
that
\begin{equation}\label{eq8A}
M_{\alpha}(\Gamma_f(y_0, r_1, r_2))\leqslant
\left(\inf\limits_{\rho\in{\rm
ext\,adm}\,f(\Sigma_{\varepsilon})}\int\limits_{f(D)\cap A(y_0, r_1,
r_2)}\frac{\rho^p(y)}{N^{p}(f, D)\cdot
Q(y)}\,dm(y)\right)^{-\frac{1}{p-1}}\,,
\end{equation}
where $Q(y):=K_{CT, p, y_0}(y, f):=\sum\limits_{z\in
f^{\,-1}(y)}\frac{\left(\biggl|(f^{\,\prime}(z))^{\,-1}\frac{y-y_0}{|y-y_0|}\biggr|\right)
^{p}}{|J(z, f)|}.$ Using the second remote formula in the proof of
Theorem~9.2 in \cite{MRSY}, we obtain that
$$\inf\limits_{\rho\in{\rm
ext\,adm}\,f(\Sigma_{\varepsilon})}\int\limits_{f(D)\cap A(y_0, r_1,
r_2)}\frac{\rho^p(y)}{N^{p}(f, D)\cdot Q(y)}\,dm(y)=$$
\begin{equation}\label{eq9}
=\int\limits_{r_1}^{r_2} \left(\inf\limits_{\alpha\in
I(r)}\int\limits_{S(y_0, r)\cap f(D)}\frac{\alpha^{p}(y)}{N^p(f,
D)\cdot Q(y)}\,\,\mathcal{H}^{1}(y)\right)\,dr\,,
\end{equation}
where $I(r)$ denotes the set of all measurable functions on $S(y_0,
r)\cap f(D)$ such that
$$\int\limits_{S(y_0, r)\cap f(D)}\alpha(x)\,\mathcal{H}^{1}=1\,.$$
Then, choosing $X=S(y_0, r)\cap f(D),$ $\mu={\mathcal H}^{1}$ and
$\varphi=\frac{1}{Q}|_{S(y_0, r)\cap f(D)}$
in~\cite[Lemma~9.2]{MRSY}, we obtain that
\begin{equation}\label{eq10}
\int\limits_{r_1}^{r_2} \left(\inf\limits_{\alpha\in
I(r)}\int\limits_{S(y_0, r)\cap
f(D)}\frac{\alpha^p(y)}{Q(y)}\,\,d\mathcal{H}^{1}\right)\,dr=
\int\limits_{r_1}^{r_2}\frac{dr}{\Vert Q\Vert_s(r)}\,,
\end{equation}
where $\Vert Q\Vert_s(r)=\left(\int\limits_{S(y_0, r)\cap
f(D)}Q^s(x)\,d{\mathcal H}^{1}\right)^{1/s}$ and
$s:=\frac{1}{p-1}=\alpha-1.$ Thus, by~(\ref{eq8A}), (\ref{eq9}) and
(\ref{eq10}) we obtain that
$$M_{\alpha}(\Gamma_f(y_0, r_1, r_2))\leqslant N^{\alpha}(f, D)\cdot
\left(\int\limits_{r_1}^{r_2}\frac{dr}{\Vert
Q\Vert_s(r)}\right)^{-\frac{1}{p-1}}=$$
\begin{equation}\label{eq11C}
=\frac{{N^{\alpha}}(f, D)\cdot
2\pi}{\left(\int\limits_{r_1}^{r_2}\frac{dr}{r^{\frac{1}{\alpha-1}}
\widetilde{q}^{1/(\alpha-1)}_{y_0}(r)}\right)
^{\frac{1}{{p-1}}}}=\frac{N^{\alpha}(f, D)\cdot
2\pi}{\left(\int\limits_{r_1}^{r_2}\frac{dr}{r^{\frac{1}{\alpha-1}}
\widetilde{q}^{1/(\alpha-1)}_{y_0}(r)}\right)^{\alpha-1}}\,,
\end{equation}
where $q_{y_0}(r)=\frac{1}{2\pi r}\int\limits_{S(y_0,
r)}\widetilde{Q}\,d\mathcal{H}^{1}$ and
$\widetilde{Q}(y)=\begin{cases}
Q^{\alpha-1}(y)\,,&y\in f(D)\,,\\
0\,,&y\not\in f(D)\end{cases}.$
Finally, it follows from~(\ref{eq11C}) and Proposition~\ref{pr3}
that the relation
$$M_{\alpha}(\Gamma_f(y_0, r_1, r_2))\leqslant \int\limits_{A(y_0,r_1,r_2)\cap
f(D)}N^{\alpha}(f, D)\cdot Q^{\alpha-1}(y)\cdot \eta^{\,\alpha}
(|y-y_0|)\, dm(y)$$
holds for a function $Q(y):=K_{CT, \frac{\alpha}{\alpha-1}, y_0}(y,
f):=\sum\limits_{z\in
f^{\,-1}(y)}\frac{\left(\biggl|(f^{\,\prime}(z))^{\,-1}\frac{y-y_0}{|y-y_0|}\biggr|\right)
^{\frac{\alpha}{\alpha-1}}}{|J(z, f)|}\,,$ that is desired
conclusion.~$\Box$

\section{Proofs of Theorems~\ref{th1B}--\ref{th2}}

\medskip
The next class of mappings is a generalization of quasiconformal
mappings in the sense of Gehring's ring definition (see \cite{Ge};
it is the subject of a separate study, see, e.g.,
\cite[Chapter~9]{MRSY}). Let $D$ and $D^{\,\prime}$ be domains in
${\Bbb C}.$ Suppose that $x_0\in\overline {D}\setminus\{\infty\}$
and $Q\colon D\rightarrow(0,\infty)$ is a Lebesgue measurable
function. A function $f\colon D\rightarrow D^{\,\prime}$ is called a
{\it lower $Q$-mapping at a point $x_0$ relative to the $p$-modulus}
if
\begin{equation}\label{eq1A}
M_p(f(\Sigma_{\varepsilon}))\geqslant \inf_{\rho\in{\rm ext}_p\,{\rm
adm}\Sigma_{\varepsilon}}\int\limits_{D\cap A(x_0, \varepsilon,
r_0)}\frac{\rho^p(x)}{Q(x)}\,dm(x)
\end{equation}
for every spherical ring $A(x_0, \varepsilon, r_0)=\{x\in {\Bbb
C}\,:\, \varepsilon<|x-x_0|<r_0\}$, $r_0\in(0,d_0)$, $d_0=\sup_{x\in
D}|x-x_0|$, where $\Sigma_{\varepsilon}$ is the family of all
intersections of the spheres $S(x_0, r)$ with the domain $D$, $r\in
(\varepsilon, r_0)$. If $p=2$, we say that $f$ is a lower
$Q$-mapping at $x_0$. We say that $f$ is a lower $Q$-mapping
relative to the $p$-modulus in $A\subset \overline {D}$ if
(\ref{eq1A}) is true for all $x_0\in A$.

\medskip
The following statement can be proved much as Theorem 9.2 in \cite{MRSY}, so we omit the arguments.

\begin{lemma}\label{lem4}{\sl\, Let $D$,
$D^{\,\prime}\subset\overline{{\Bbb C}},$ let $x_0\in\overline
{D}\setminus\{\infty\}$, and let $Q$ be a Lebesgue measurable
function. A mapping $f\colon D\rightarrow D^{\,\prime}$ is a lower
$Q$-mapping relative to the $p$-modulus at a point $x_0$, $p>1$, if
and only if $M_p(f(\Sigma_{\varepsilon}))\geqslant
\int\limits_{\varepsilon}^{r_0} \frac{dr}{\|\,Q\|_{s}(r)}$ for all
$\varepsilon\in(0,r_0),\ r_0\in(0,d_0)$, $d_0=\sup_{x\in D}|x-x_0|$,
$s=\frac{1}{p-1}$, where, as above, $\Sigma_{\varepsilon}$ denotes
the family of all intersections of the spheres $S(x_0, r)$ with $D$,
$r\in (\varepsilon, r_0)$, $\|
Q\|_{s}(r)=(\int\limits_{D(x_0,r)}Q^{s}(x)\,d{\mathcal{A}})^{\frac{1}{s}}$
is the $L_{s}$-norm of $Q$ over the set ${D(x_0,r)=\{x\in D\,:\,
|x-x_0|=r\}=D\cap S(x_0,r)}\,.$}
\end{lemma}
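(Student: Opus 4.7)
The plan is to reduce Lemma~\ref{lem4} to the identity
\begin{equation*}
\inf\limits_{\rho\in{\rm ext}_p\,{\rm adm}\,\Sigma_{\varepsilon}}\int\limits_{D\cap A(x_0,\varepsilon,r_0)}\frac{\rho^p(x)}{Q(x)}\,dm(x)=\int\limits_{\varepsilon}^{r_0}\frac{dr}{\|Q\|_s(r)}\,,
\end{equation*}
after which the equivalence asserted in Lemma~\ref{lem4} follows immediately from the defining inequality~(\ref{eq1A}). The computation takes place in polar coordinates around $x_0$: writing $dm(x)=dr\,d{\mathcal H}^{1}(x)$ with ${\mathcal H}^{1}$ the arc-length on the circle $S(x_0,r)$, Fubini converts the integral on the left into $\int_\varepsilon^{r_0}\bigl(\int_{D(x_0,r)}\rho^p/Q\,d{\mathcal H}^{1}\bigr)dr$, so the $r$-integrand is the object one must control sphere by sphere.

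For the lower bound, I would apply H\"older's inequality with exponents $p$ and $p/(p-1)$ on each sphere. Since $\rho\in{\rm ext}_p\,{\rm adm}\,\Sigma_\varepsilon$, Proposition~\ref{pr7} ensures that $\int_{D(x_0,r)}\rho\,d{\mathcal H}^{1}\geqslant 1$ for Lebesgue-a.e. $r\in(\varepsilon,r_0)$. Splitting $\rho=\rho Q^{-1/p}\cdot Q^{1/p}$ yields
\begin{equation*}
1\leqslant\left(\int_{D(x_0,r)}\frac{\rho^p}{Q}\,d{\mathcal H}^{1}\right)^{1/p}\cdot\|Q\|_s(r)^{1/p}\,,
\end{equation*}
since $s(p-1)=1$ converts the H\"older factor $(\int Q^s\,d{\mathcal H}^{1})^{(p-1)/p}$ into $\|Q\|_s(r)^{1/p}$. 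Rearranging and integrating in $r$ gives the $\geqslant$ direction of the infimum identity.

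For the matching upper bound, I would exhibit the extremizer to H\"older directly. Setting $\rho_0(x):=Q^{s}(x)/\|Q\|_s^{s}(r)$ on each $D(x_0,r)$, $r\in(\varepsilon,r_0)$, and $\rho_0\equiv 0$ elsewhere, a short calculation using $sp-1=s$ yields both
\begin{equation*}
\int_{D(x_0,r)}\rho_0\,d{\mathcal H}^{1}=1\qquad\text{and}\qquad\int_{D(x_0,r)}\frac{\rho_0^p}{Q}\,d{\mathcal H}^{1}=\frac{1}{\|Q\|_s(r)}\,.
\end{equation*}
Integrating in $r$ shows that this $\rho_0$ is admissible on every sphere and that the corresponding functional equals exactly $\int_\varepsilon^{r_0}dr/\|Q\|_s(r)$, giving the reverse inequality.

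The main obstacles are technical rather than conceptual: (i) verifying Borel measurability of $\rho_0$ and of $r\mapsto\|Q\|_s(r)$, which follows from standard Fubini-type arguments applied to the measurable $Q$; (ii) handling the degenerate values $\|Q\|_s(r)\in\{0,\infty\}$ via the conventions $a/0=\infty$, $a/\infty=0$ recalled at the start of Section~3, possibly after exhausting $(\varepsilon,r_0)$ by level sets of $\|Q\|_s$ on which the extremizer is bounded; and (iii) passing from pointwise admissibility on Lebesgue-a.e. circle to $p$-extensive admissibility for $\Sigma_\varepsilon$, which is precisely what Proposition~\ref{pr7} delivers. This is the exact pattern of the proof of~\cite[Theorem~9.2]{MRSY} adapted to the general $p$-modulus, which is the reason the authors omit the detailed verification.
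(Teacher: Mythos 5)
Your proof is correct and follows exactly the argument the paper intends by its reference to Theorem~9.2 of \cite{MRSY} (the proof is omitted there): polar-coordinate Fubini reduces the infimum in~(\ref{eq1A}) to a sphere-by-sphere minimization, which H\"older's inequality and the explicit extremizer $\rho_0=Q^{s}/\|Q\|_s^{s}(r)$ evaluate as $1/\|Q\|_{s}(r)$. The exponent bookkeeping ($s(p-1)=1$, $sp-1=s$) checks out, and the measurability and ``$p$-a.e.\ circle versus a.e.\ $r$'' issues are correctly delegated to Proposition~\ref{pr7}.
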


\medskip
The following statement holds, cf.~\cite[Theorem~2.1]{KR},
\cite[Lemma~2.3]{SSP}, \cite[Lemma~2]{Sev$_2$}
and~\cite[Theorem~4.1]{RSY$_2$}.

\begin{theorem}\label{thOS4.2}{\sl\, Let $p>1$ and let
$f:D\rightarrow {\Bbb C}$ be an open discrete mapping of a finite
distortion such that $N(f, D)<\infty.$ Then $f$ satisfies the
relation~(\ref{eq1A}) at any $z_0\in\overline{D}$ for $Q(z)=N(f,
D)\cdot K_{T, p, z_0}(z, f),$ where
$$Q(z):=K_{T, p, z_0}(z,
f):=\frac{\left(\biggl|(f^{\,\prime}(z))\frac{z-z_0}{|z-z_0|}\biggr|\right)
^{p}}{|J(z, f)|}$$
and $N(f, D)$ is defined in~(\ref{eq15}).}
\end{theorem}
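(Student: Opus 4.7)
The strategy is the standard dual Fuglede argument for lower $Q$-mappings, running the computation of Lemma~\ref{lem1} in the forward direction; compare also \cite[Theorem~4.1]{RSY$_2$} and \cite[Lemma~2.3]{SSP}. Because $f$ is an open discrete mapping of finite distortion in the plane with $N(f,D)<\infty,$ $f$ is differentiable almost everywhere, has Lusin's $N$-property, and the critical set $B_*$ satisfies $m(f(B_*))=0.$ Hence the Kirszbraun--Federer decomposition used in Lemma~\ref{lem1} applies: up to a null set, $D$ splits into countably many Borel pieces $B_k$ on which the restrictions $f_k:=f|_{B_k}$ are bilipschitz homeomorphisms.

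Fix $\tilde{\rho}\in{\rm adm}\,f(\Sigma_{\varepsilon})$ with $\int_{{\Bbb C}}\tilde{\rho}^{p}\,dm<\infty,$ and define on $D\cap A(z_0,\varepsilon,r_0)$ the pullback
$$\rho(z)\;:=\;\tilde{\rho}(f(z))\cdot\Bigl|f^{\,\prime}(z)\tfrac{z-z_0}{|z-z_0|}\Bigr|$$
at non-degenerate differentiability points of $f,$ and $\rho(z):=0$ elsewhere. The virtue of this choice is that by~(\ref{eq19}) it yields the pointwise identity
$$\rho^{p}(z)\;=\;\tilde{\rho}^{p}(f(z))\cdot K_{T,p,z_0}(z,f)\cdot|J(z,f)|$$
on $D\setminus(B_0\cup B_*),$ so that $\rho^{p}(z)/K_{T,p,z_0}(z,f)$ reduces to the Jacobian-weighted integrand $\tilde{\rho}^{p}(f(z))\,|J(z,f)|.$

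The first substantive step is to verify that $\rho\in{\rm ext}_{p}\,{\rm adm}\,\Sigma_{\varepsilon}.$ For a.e.\ $r\in(\varepsilon,r_0),$ the circle $D(z_0,r):=S(z_0,r)\cap D$ admits the polar parametrization $\gamma_r(\theta)=z_0+re^{i\theta},$ and $f(D(z_0,r))$ is a countable family of locally rectifiable dashed lines $\widetilde{\gamma}_i.$ Using Proposition~\ref{pr1} to organize the $N(f,D)$ branches of $f$ over each rectifiable subarc of $f\circ\gamma_r,$ the chain-rule identification of the length element on $f(D(z_0,r))$ with $|f^{\,\prime}(z)(z-z_0)/|z-z_0||\,d{\mathcal H}^{1}(z)$ on $D(z_0,r)$ (the forward-direction mirror of (\ref{eq7})--(\ref{eq8}) in Lemma~\ref{lem1}), and admissibility of $\tilde{\rho}$ along each $\widetilde{\gamma}_i,$ one obtains $\int_{D(z_0,r)}\rho\,d{\mathcal H}^{1}\geqslant 1$ for $p$-almost all $r.$ Proposition~\ref{pr7} then upgrades this to extensive admissibility modulo $p$-exceptional families of dashed lines.

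The final step is the energy comparison. Splitting the integral over the Borel pieces $B_k,$ applying the change-of-variables formula \cite[Theorem~3.2.5]{Fe} branch by branch as in (\ref{eq13A})--(\ref{eq14B}), and using $\sum_{k}\chi_{f(B_k)}(y)\leqslant N(y,f,D)\leqslant N(f,D)$ gives
$$\int_{D}\frac{\rho^{p}(z)}{N(f,D)\cdot K_{T,p,z_0}(z,f)}\,dm(z)\;\leqslant\;\int_{{\Bbb C}}\tilde{\rho}^{p}(y)\,dm(y),$$
the factor $N(f,D)$ in the denominator exactly absorbing the multiplicity picked up by the change of variables. Taking infimum over $\tilde{\rho}\in{\rm adm}\,f(\Sigma_{\varepsilon})$ then produces (\ref{eq1A}) with $Q(z)=N(f,D)\,K_{T,p,z_0}(z,f),$ as asserted. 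The main obstacle will be the careful lifting/admissibility bookkeeping in the first substantive step---tracking the $N(f,D)$ branches over each image circle, correctly identifying the tangential length element on $f(D(z_0,r))$ with the paper's choice of direction in~(\ref{eq19}), and handling $p$-exceptional families---which is the forward-direction analogue of the full computation carried out in Lemma~\ref{lem1}.
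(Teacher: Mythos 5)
Your proposal follows essentially the same route as the paper's proof: pull back an admissible $\tilde{\rho}$ via $\rho(z)=\tilde{\rho}(f(z))\left|f^{\,\prime}(z)\frac{z-z_0}{|z-z_0|}\right|$, verify extensive admissibility on almost every circle, and then change variables on the bilipschitz Borel pieces $B_k$ with the multiplicity absorbed by $N(f,D)$. The only cosmetic difference is that no path lifting (Proposition~\ref{pr1}) is needed in this forward direction: the paper gets admissibility on a.e.\ circle directly from the absolute continuity of $f$ along almost all circles (a consequence of $f\in W^{1,1}_{\rm loc}$) together with the vanishing of $f^{\,\prime}$ on $B_*$ forced by finite distortion.
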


\begin{proof}
The proof of this Theorem uses the scheme outlined
in~\cite[Theorem~4]{Sev$_1$}, cf. \cite[Theorem~4.1]{RSY$_2$}.
Observe that, $f=\varphi\circ g,$ where $g$ is some homeomorphism
and $\varphi$ is an analytic function, see (\cite[5.III.V]{St}).
Thus, $f$ is differentiable almost everywhere (see, e.g.,
\cite[Theorem~III.3.1]{LV}). Let $B$ be a Borel set of all points
$z\in D,$ where $f$ has a total differential $f^{\,\prime}(z)$ and
$J(z, f)\ne 0$. Observe that, $B$ may be represented as a countable
union of Borel sets $B_l$, $l=1,2,\ldots\,,$ such that
$f_l=f|_{B_l}$ are bi-lipschitzian homeomorphisms (see
\cite[items~3.2.2, 3.1.4 and 3.1.8]{Fe}). Without loss of
generality, we may assume that the sets $B_l$ are pairwise disjoint.
Denote by $B_*$ the set of all points $z\in D$ in which $f$ has a
total differential, however, $f^{\,\prime}(z)=0.$

Since $f$ is of finite distortion, $f^{\,\prime}(z)=0$ for almost
all $z,$ where $J(z, f)=0.$ Thus, by the construction the set
$B_0:=D\setminus \left(B\bigcup B_*\right)$ has a zero Lebesgue
measure. Therefore, by Proposition~\ref{pr2} and Remark~\ref{rem2}
$l(B_0\cap S_r)=0$ for $p$-almost all circles $S_r:=S(z_0,r)$
centered at $z_0\in\overline{D}.$ Observe that, a function
$\psi(r):={\mathcal H}^{\,1}(B_0\cap S_r)=l(B_0\cap S_r)$ is
Lebesgue measurable by the Fubini theorem, thus, the set $E=\{r\in
{\Bbb R}: l(B_0\cap S_r)=0\}$ is Lebesgue measurable. Now, by
Proposition~\ref{pr7} we obtain that
\begin{equation}\label{eq16A}
l(B_0\cap S_r)=0 \quad \text{for\,\,almost\,\, any\,\,} r\in {\Bbb
R}\,.
\end{equation}
Let us fix the circle
$$\gamma(t)=S(z_0,r)=re^{it}+y_0, \quad t\in [0, 2\pi)\,.$$
Then $S_r:=S(y_0,r)\cap D$ is a family of dashed lines
$$\gamma_i: \bigcup\limits_{i=1}^{\infty} (a_i,
b_i)\rightarrow D,\quad (a_i, b_i)\subset {\Bbb R},\quad
i=1,2,\ldots \,.$$
Consider the normal representation $\gamma^0_i$ for $\gamma_i,$
namely,
\begin{equation}\label{eq14A}
\gamma^0_i(t)=z_0+re^{i(t+t_0)/r}\,,\qquad t\in (0,
l(\gamma^0_i))\,,
\end{equation}
where $t_0\in {\Bbb R}$ is a number such that
$\gamma^0_i(t_0)=\gamma_i(a_i),$ and the number $l(\gamma^0_i)$
denotes the length of $\gamma_i$ (see \cite[Definition~2.5]{Va}).

\medskip
Let $\Gamma$ be a family of all intersections of circles $S_r$,
$r\in(\varepsilon,\varepsilon_0),$
$\varepsilon_0<d_0=\sup\limits_{z\in D}\,|z-z_0|,$ with a domain
$D.$ Given an admissible function $\rho_*\in{\rm adm}\,f(\Gamma),$
$\rho_*\equiv 0$ outside of $f(D),$ we set $\rho\equiv 0$ outside of
$D$ and on $B_0\cup B_*,$ and
$$\rho(z)\colon=\rho_*(f(z))
\biggl|(f^{\,\prime}(z))\frac{z-z_0}{|z-z_0|}\biggr| \qquad\text{for}\ z\in D\setminus
B_0\,.$$
Denote by
$$\gamma_i(B_0\cup B_*)=\{t\in (0, l(\gamma_i)): \gamma_i(t)\in B_0\cup B_*\}\,.$$
%
%and
%
%$$f(\gamma_i(B_0\cup B_*))=\left\{\widetilde{t}\in (0, f(l(\gamma_i))):
%(f(\gamma_i))^{0}(\widetilde{t})\in f(B_0\cup B_*)\right\}\,.$$
%
Using~(\ref{eq14A}) and theorem on the derivative of superpositions
of mappings, we obtain that
$$\int\limits_{\gamma_i}
\rho(z)\,|dz|=\int\limits_{[0, l(\gamma_i)]\setminus
\gamma_i(B_0\cup B_*)} \rho(\gamma_i^0(t))\,dt+\int\limits_{
\gamma_i(B_0\cup B_*)} \rho(\gamma_i^0(t))\,dt=$$$$=\int\limits_{[0,
l(\gamma_i)]\setminus \gamma_i(B_0\cup B_*)}
\rho_*(f(z_0+re^{i(t+t_0)/r}))
\biggl|(f^{\,\prime}(z_0+re^{i(t+t_0)/r}))re^{i(t+t_0)/r}\biggr|\,dt=$$
\begin{equation}\label{eq15A}
=\int\limits_{[0, l(\gamma_i)]\setminus \gamma_i(B_0\cup B_*)}
\rho_*(f(z_0+re^{i(t+t_0)/r}))
\biggl|\left(f(z_0+re^{i(t+t_0)/r})(t)\right)^{\prime}\biggr|\,dt=
\end{equation}
$$=\int\limits_0^{l(\gamma_i)}
\rho_*(f(z_0+re^{i(t+t_0)/r}))
\biggl|
\left(f(z_0+re^{i(t+t_0)/r})(t)\right)^{\prime}\biggr|\,dt=\int\limits_{f(\gamma_i)}
\rho_*(w)\,|dw|$$
for a.e. $r,$ because $f\in W_{loc}^{1, 1}$ and, consequently, the
path $f(\gamma_i(t))$ is absolutely continuous over $t$ for a.e. $r$
(see~\cite[Theorem~1.I]{Ma} and~\cite[Theorem~4.1]{Va}). Here we
have used the fact this $z_0+re^{i(t+t_0)/r}\not\in B_0$ for a.e.
$r$ and for a.e. $t,$ that follows from~(\ref{eq16A}). Similarly,
since by the condition $f$ is a mapping with a finite distortion,
that implies that
$\left(f(z_0+re^{i(t+t_0)/r})(t)\right)^{\prime}=0$ for any $t$ such
that $z_0+re^{i(t+t_0)/r}\in B_*,$ that has been used
in~(\ref{eq15A}), as well. The relation~(\ref{eq15A}) implies that
$$\int\limits_{\gamma_i}
\rho(z)\,|dz|=\int\limits_{f(\gamma_i)} \rho_*(w)\,|dw|$$
for a.e. $r.$ Summing the last relation over all $i=1,2,\ldots ,$ we
obtain that
$$\int\limits_{S_r}
\rho(z)\,|dz|=\int\limits_{f(S_r)} \rho_*(w)\,|dw|\geqslant 1$$
for a.e. $r$ and, consequently, for $p$-a.e. $S_r$ in the sense of
$p$-modulus (see Proposition~\ref{pr7}). Thus,
$\rho\in{\rm{ext}}_p{\rm\,adm}\,\Gamma.$

\medskip
Using the change of variables on $B_l$, $l=1,2,\ldots$ (see, e.g.,
\cite[Theorem~3.2.5]{Fe}), by the countable additivity of the
Lebesgue integral we obtain that
$$\int\limits_{D}\frac{\rho^p(z)}{K_{T, p, z_0}(z,
f)}\,dm(z)=\sum\limits_{l=1}^{\infty}\int\limits_{B_l}
\rho^p(f(z))|J(z, f)|\,dm(z)=$$
$$=\sum\limits_{l=1}^{\infty}\int\limits_{f(B_l)}\rho^p_*(y)\,dm(y)
\leqslant \int\limits_{f(D)}N(f, D)\rho^p_*(y)\,dm(y)\,,$$ as
required.~$\Box$
\end{proof}

We also need the following statement given in
\cite[Proposition~10.2, Ch.~II]{Ri}.

\begin{proposition}\label{pr4}{\sl\,
Let $E=(A,\,C)$ be a condenser in ${\Bbb C}$ and let $\Gamma_E$ be
the family of all paths of the form $\gamma:[a,\,b)\rightarrow A$
with $\gamma(a)\in C$ and $|\gamma|\cap(A\setminus F)\ne\varnothing$
for every compact $F\subset A.$ Then ${\rm cap}_q\,E=
M_q(\Gamma_E).$}
\end{proposition}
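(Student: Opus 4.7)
The plan is to establish the equality by proving the two opposite inequalities, using classical arguments that tie together Sobolev line integrals and path-metric constructions. I shall take for granted the basic fact that functions in $W_0(A,C)$ are absolutely continuous on $q$-almost all locally rectifiable paths.

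For the easy direction $M_q(\Gamma_E)\leqslant{\rm cap}_q\,E,$ I would feed an admissible function for the capacity into the definition of the modulus. Fix $u\in W_0(A,C)$ and set $\rho(z):=|\nabla u(z)|$ (extended by zero outside $A$). Given any $\gamma\in\Gamma_E,$ the path starts at a point of $C$ where $u\geqslant 1$ and, since $|\gamma|$ leaves every compact subset of $A$ and $u$ has compact support in $A,$ the composition $u\circ\gamma$ tends to $0.$ A standard ACL-on-paths argument (see, e.g., \cite[Theorems~28.1--28.2]{Va}) then yields $\int_\gamma\rho\,|dz|\geqslant 1,$ so $\rho\in{\rm adm}\,\Gamma_E$ and $M_q(\Gamma_E)\leqslant\int_A|\nabla u|^q\,dm(z);$ taking infimum over $u$ gives the inequality.

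For the reverse inequality ${\rm cap}_q\,E\leqslant M_q(\Gamma_E)$ I would manufacture an admissible function for the capacity out of an admissible metric. Let $\rho\in{\rm adm}\,\Gamma_E$ be Borel with finite $q$-integral (a truncation argument reduces to this case). Define
\begin{equation*}
v(z):=\min\Bigl\{1,\,\inf\limits_{\alpha}\int\limits_\alpha\rho\,|dz|\Bigr\}\,,
\end{equation*}
where the infimum is over all locally rectifiable paths $\alpha$ starting at $z$ whose image leaves every compact subset of $A.$ I would then verify: (i) $v\equiv 1$ on $C,$ because every such path starting at a point of $C$ belongs to $\Gamma_E$ by the very definition of $\Gamma_E;$ (ii) $v$ is $1$-Lipschitz with respect to the $\rho$-pseudometric on $A,$ which, together with the local integrability of $\rho,$ forces $v\in ACL(A)$ with $|\nabla v|\leqslant\rho$ almost everywhere; (iii) $v(z)\to 0$ as $z\to\partial A$ through the interior, so $v$ has the correct boundary behaviour. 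Consequently
\begin{equation*}
{\rm cap}_q\,E\leqslant\int\limits_A|\nabla v|^q\,dm(z)\leqslant\int\limits_{\Bbb C}\rho^q\,dm(z)\,,
\end{equation*}
and taking the infimum over admissible $\rho$ completes the argument.

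The main obstacle is step (iii) in its strong form: members of $W_0(E)$ are required to have \emph{compact} support in $A,$ not merely to vanish at $\partial A,$ so the truncated distance function $v$ has to be replaced by something genuinely compactly supported. The standard remedy is to consider, for $\delta>0,$ the function $v_\delta:=\max\{0,\,v-\delta\}/(1-\delta),$ whose support is contained in $\{v\geqslant\delta\}$ (this set is compact in $A$ as soon as $\rho$ has finite $q$-integral, because paths escaping $A$ must accumulate $\rho$-length at least $\delta$). One then bounds $|\nabla v_\delta|\leqslant|\nabla v|/(1-\delta)$ and lets $\delta\to 0^{+}.$ A secondary technicality is the Borel measurability of the infimum defining $v;$ this is handled by restricting to a countable dense collection of paths or by invoking the lower semicontinuity of the path integral in the $\rho$-metric.
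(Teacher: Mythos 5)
The paper itself offers no proof of this statement: it is quoted directly from \cite[Proposition~10.2, Ch.~II]{Ri}, so there is nothing in-text to compare your argument against, and it has to stand on its own. Your first direction, $M_q(\Gamma_E)\leqslant{\rm cap}_q\,E$, is essentially fine; the only point to tidy is that a path of $\Gamma_E$ need not converge to $\partial A$ (it merely leaves every compact subset of $A$), so rather than saying $u\circ\gamma$ tends to $0$ you should observe that $|\gamma|$ meets $A\setminus{\rm supp}\,u$, hence $u$ vanishes at some point of the path and the oscillation of $u$ along a subpath is at least $1$; together with Fuglede's theorem this makes $|\nabla u|$ admissible up to a family of zero $q$-modulus, which suffices.

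The reverse inequality contains a genuine gap: the parenthetical claim that $\{v\geqslant\delta\}$ is compactly contained in $A$ as soon as $\int\rho^q\,dm<\infty$ is false precisely in the range $1<q\leqslant 2$ in which this proposition is used in the paper. Because single points have zero $q$-capacity in the plane for $q\leqslant 2$, a point $z_j$ can be separated from $\partial A$ at arbitrarily small $q$-energy: put $\rho(z)=c_j/|z-z_j|$ on an annulus $r_j<|z-z_j|<R_j$ with $\overline{B(z_j,R_j)}\subset A$ and $c_j\log(R_j/r_j)=\delta$; every escaping path from $z_j$ must cross this annulus and picks up $\rho$-length at least $\delta$, while the $q$-energy of this piece is of order $\delta^q/\log^{q-1}(R_j/r_j)$ (times $R_j^{2-q}$ when $q<2$), hence as small as one pleases. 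Superimposing such annuli along a sequence $z_j\to\partial A$ onto a fixed finite-energy admissible background metric yields $\rho\in{\rm adm}\,\Gamma_E$ with $\int\rho^q\,dm<\infty$ for which $v(z_j)\geqslant\delta$ for all $j$; thus $\{v\geqslant\delta\}$ accumulates on $\partial A$, the truncation $v_\delta$ does not have compact support in $A$, and $v_\delta\notin W_0(E)$, so your final chain of inequalities does not go through. This is not a removable technicality: upgrading a test function that merely decays along almost every escaping path to one with genuinely compact support is exactly the hard content of the Ziemer--Hesse--Shlyk circle of results that the paper invokes in (\ref{eq3})--(\ref{eq4}), and it requires either an exhaustion of $A$ by compacta with a continuity argument for modulus and capacity, or a proof that ${\rm cap}_q$ is unchanged when $W_0(E)$ is replaced by a class of functions vanishing at $\partial A$ in a Sobolev sense. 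As written, your argument establishes only the inequality $M_q(\Gamma_E)\leqslant{\rm cap}_q\,E$.
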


\medskip
An analogue of the following assertion has been proved several times
earlier under slightly different conditions,
see~\cite[Lemma~4.2]{SevSkv} and \cite[Lemma~5]{Sev$_1$}. In the
formulation given below, this result is proved for the first time.

\medskip
 \begin{lemma}\label{l4.4}
{\,\sl Let $D$ be a domain in ${\Bbb C},$ let $p>1,$ let $x_0\in D$
and let $f:D\rightarrow {\Bbb C}$ be an open and discrete mapping
satisfying the relation~(\ref{eq1A}) at a point $x_0.$ Assume that
$Q\colon D\rightarrow[0,\infty]$ is a Lebesgue measurable function
which is locally integrable in the degree $s=\frac{1}{p-1}$ in $D.$
Then the relation
\begin{equation}\label{eq4B}
{\rm cap}_{\alpha}\, f(\mathcal{E})\leqslant\int\limits_{A}
Q^{\,*}(x)\cdot \eta^{\alpha}(|x-x_0|)\, dm(x)
\end{equation}
holds for $\alpha=\frac{p}{p-1}$ and
$Q^{\,*}(x)=Q^{\frac{1}{p-1}}(x),$ where $\mathcal{E}=(B(x_0, r_2),
\overline{B(x_0, r_1)}),$ $A=A(x_0, r_1, r_2),$
$0<r_1<r_2<\varepsilon_0:={\rm dist}\,(x_0,
\partial D),$ and $\eta \colon (r_1,r_2)\rightarrow [0,\infty ]$ may
be chosen as arbitrary nonnegative Lebesgue measurable function
satisfying the relation~(\ref{eq8BC}).}
 \end{lemma}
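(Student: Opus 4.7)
The plan is to translate the image capacity into a $p$-modulus question via Rickman's Proposition~\ref{pr4} and the Ziemer--Hesse duality, then to invoke the lower $Q$-mapping hypothesis through Lemma~\ref{lem4}, and finally to convert the resulting radial integral into a two-dimensional integral via Proposition~\ref{pr3}.

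First, since $f$ is open and continuous, $f(B(x_0, r_2))$ is open and $f(\overline{B(x_0, r_1)})$ is a compact subset of it, so the image pair $f(\mathcal{E})$ is a genuine condenser. Proposition~\ref{pr4} then yields
$${\rm cap}_\alpha\, f(\mathcal{E}) = M_\alpha(\Gamma_{f(\mathcal{E})})\,,$$
where $\Gamma_{f(\mathcal{E})}$ consists of paths in $f(B(x_0, r_2))$ starting on $f(\overline{B(x_0, r_1)})$ and escaping every compact subset of $f(B(x_0, r_2))$.

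Second, following the scheme in the proof of Theorem~\ref{th1}, I would apply the Ziemer--Hesse duality (\ref{eq3})--(\ref{eq4}) to relate this $\alpha$-modulus to the $p$-modulus of a family of separating curves. The key topological fact is that for every $r \in (r_1, r_2)$, the image $f(D(x_0, r))$ of $D(x_0, r) = S(x_0, r) \cap D$ separates $f(\overline{B(x_0, r_1)})$ from the complement of $f(\overline{B(x_0, r_2)})$: since $f$ is continuous and open, $f(B(x_0, r))$ is an open neighbourhood of $f(\overline{B(x_0, r_1)})$ whose topological boundary inside $f(B(x_0, r_2))$ lies in $f(S(x_0, r))$, so any path escaping every compact subset of $f(B(x_0, r_2))$ must cross $f(D(x_0, r))$. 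Therefore the family $f(\Sigma_{r_1, r_2})$ lies inside the class $\Sigma^*$ of separating sets associated to the image condenser, and monotonicity of $\widetilde{M}_p$ together with the duality gives
$${\rm cap}_\alpha\, f(\mathcal{E}) \leqslant \left(M_p(f(\Sigma_{r_1, r_2}))\right)^{-(\alpha - 1)}\,.$$

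Third, since $f$ satisfies the lower $Q$-mapping condition (\ref{eq1A}) at $x_0$, Lemma~\ref{lem4} gives
$$M_p(f(\Sigma_{r_1, r_2})) \geqslant \int_{r_1}^{r_2} \frac{dr}{\|Q\|_s(r)}\,, \qquad s = \frac{1}{p-1} = \alpha - 1\,.$$
Setting $Q^*(x) = Q^{1/(p-1)}(x)$, the spherical mean of $Q^*$ on $S(x_0, r)$ equals $q_{x_0}(r) = \|Q\|_s(r)^s/(2\pi r)$, and a short bookkeeping shows that the radial integral appearing in Proposition~\ref{pr3} (with $p$ replaced by $\alpha$ and $Q_* = Q^*$) reduces to $\int_{r_1}^{r_2} dr/\|Q\|_s(r)$ up to a factor that exactly cancels the $2\pi$ on the left-hand side. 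One then obtains
$$\left(\int_{r_1}^{r_2} \frac{dr}{\|Q\|_s(r)}\right)^{-(\alpha - 1)} \leqslant \int_A Q^*(x)\, \eta^\alpha(|x - x_0|)\, dm(x)\,,$$
which combined with the previous estimate gives~(\ref{eq4B}); Remark~\ref{rem3} allows $\eta$ to satisfy~(\ref{eq8BC}) rather than $\int \eta\, dr = 1$.

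The main obstacle is the second step: rigorously setting up the Ziemer--Hesse duality on the image side and verifying that $f(D(x_0, r))$ genuinely serves as an admissible separating set, despite $f$ not being a homeomorphism. A careful topological argument based only on openness and continuity of $f$ should suffice, avoiding the need for closedness required by Proposition~\ref{pr1}.
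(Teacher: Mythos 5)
Your proposal is correct and follows essentially the same route as the paper's proof: Rickman's Proposition~\ref{pr4} identifies ${\rm cap}_{\alpha}f(\mathcal{E})$ with $M_{\alpha}(\Gamma_{f(\mathcal{E})})$, the Ziemer--Hesse duality (\ref{eq3})--(\ref{eq4}) converts this to the reciprocal power of $\widetilde{M_p}$ of the separating sets, the inclusion $\partial f(B(x_0,r))\subset f(S(x_0,r))$ (from openness and continuity, exactly as you sketch in your ``main obstacle'') passes the estimate to $M_p(f(\Sigma_{r_1,r_2}))$, and Lemma~\ref{lem4} plus the $\|Q\|_s$-to-$q_{x_0}$ bookkeeping (your cancellation of the $2\pi$ is the same computation the paper delegates to Lemma~2 of \cite{SalSev$_2$}, i.e.\ Proposition~\ref{pr3} with Remark~\ref{rem3}) finishes the argument. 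No gaps beyond the one you already identified and correctly resolved.
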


\begin{proof} Observe that $s=\alpha-1.$ By Lemma~2
in~\cite{SalSev$_2$}, it is sufficiently to prove that
$${\rm cap_{\alpha}}\,
f(\mathcal{E})\leqslant \frac{2\pi}{I^{*\,\alpha-1}}\,,$$
where $\mathcal{E}$ is a condenser $\mathcal{E}=(B(x_0, r_2),
\overline{B(x_0, r_1)}),$ and $q^{\,*}_{x_0}(r)$ denotes the
integral average of $Q^{\alpha-1}(x)$ under $S(x_0, r),$
\begin{equation}\label{eq16}
q_{x_0}(r)=\frac{1}{2\pi r}\int\limits_{S(x_0,
r)}Q(x)\,d\mathcal{H}^{1}\,,
\end{equation}
where $$I^{\,*}=I^{\,*}(x_0, r_1,r_2)=\int\limits_{r_1}^{r_2}\
\frac{dr}{r^{\frac{1}{\alpha-1}}q^{\,*\,\frac{1}{\alpha-1}}_{x_0}(r)}\,.$$
Let $\varepsilon\in (r_1, r_2)$ and let $B(x_0, \varepsilon).$ We
put $C_0=\partial f(B(x_0, r_2)),$ $C_1=f(\overline{B(x_0, r_1)}),$
$\sigma=\partial f(B(x_0, \varepsilon)).$ Since $f$ is continuous in
$D,$ the set $f(B(x_0, r_2))$ is bounded.

\medskip
Since $f$ is continuous, $\overline{f(B(x_0, r_1))}$ is a compact
subset of $f(B(x_0, \varepsilon)),$ and $\overline{f(B(x_0,
\varepsilon))}$ is a compact subset of $f(B(x_0, r_2)).$ In
particular,
$$\overline{f(B(x_0, r_1))}\cap
\partial f(B(x_0, \varepsilon))=\varnothing\,.$$
Let, as above, $R=G \setminus (C_{0} \cup C_{1}),$ $G:=f(D),$ and
$R^{\,*} = R \cup C_{0}\cup C_{1}.$ Then $R^{\,*}.$ Observe that,
$\sigma$ separates $C_0$ from $C_1$ in $R^{\,*}=G.$ Indeed, the set
$\sigma \cap R$ is closed in $R,$ besides that, if $A:=G\setminus
\overline{f(B(x_0, \varepsilon))}$ and $B= f(B(x_0, \varepsilon)),$
then $A$ and $B$ are open in $G\setminus \sigma,$ $C_0\subset A,$
$C_1\subset B$ and $G\setminus \sigma=A\cup B.$

\medskip
Let $\Sigma$ be a family of all sets, which separate $C_0$ from
$C_1$ in $G.$ Below by $\bigcup\limits_{r_1<r<r_2}
\partial f(B(x_0, r))$ or $\bigcup\limits_{r_1<r<r_2}
f(S(x_0, r))$ we mean the union of all Borel sets into a family, but
not in a theoretical-set sense (see \cite[item~3, p.~464]{Zi$_1$}).
Let $\rho\in \widetilde{{\rm adm}}\bigcup\limits_{r_1<r<r_2}
\partial f(B(x_0, r))$ in the sense of the relation~(\ref{eq13.4.13}). Then $\rho\in {\rm
adm}\bigcup\limits_{r_1<r<r_2}
\partial f(B(x_0, r))$ in the sense of~(\ref{eq8.2.6}).
By the openness of the mapping $f$ we obtain that  $\partial
f(B(x_0, r))\subset f(S(x_0, r)),$  therefore, $\rho\in {\rm
adm}\bigcup\limits_{r_1<r<r_2} f(S(x_0, r))$ and, consequently,
by~(\ref{eq13.4.12})
\begin{multline}\label{eq5E}
\widetilde{M_p}(\Sigma)\geqslant
\widetilde{M_p}\left(\bigcup\limits_{r_1<r<r_2}
\partial f(B(x_0, r))\right)\geqslant\\
\geqslant \widetilde{M_p}\left(\bigcup\limits_{r_1<r<r_2}
f(S(x_0, r))\right)\geqslant\\
\geqslant M_p\left(\bigcup\limits_{r_1<r<r_2} f(S(x_0, r))\right).
\end{multline}
However, by~(\ref{eq3}) and ~(\ref{eq4}) we obtain that
\begin{equation}\label{eq6D}
\frac{1}{(M_{\alpha}(\Gamma(C_0, C_1, G)))^{1/(\alpha-1)}}=
\widetilde{M_{p}}(\Sigma)\,.
\end{equation}
Let $\Gamma_{f(\mathcal{E})}$ be a family of all paths which
correspond to the condenser $f(\mathcal{E})$ in the sense of
Proposition~\ref{pr4}, and let $\Gamma^{\,*}_{f(\mathcal{E})}$ be a
family of all rectifiable paths of $\Gamma_{f(\mathcal{E})}.$ Now,
observe that, the families $\Gamma^{*}_{f(\mathcal{E})}$ and $\Gamma
(C_0, C_1, G)$ have the same families of admissible functions
$\rho.$ Thus,
$$M_{\alpha}(\Gamma_{f(\mathcal{E})})=M_{\alpha}(\Gamma(C_0, C_1, G))\,.$$
By Proposition~\ref{pr4}, we obtain that
$M_{\alpha}(\Gamma_{f(\mathcal{E})})={\rm
cap}_{\alpha}f(\mathcal{E}).$ By~(\ref{eq6D}) we obtain that
\begin{equation}\label{eq7B}
\left(\widetilde{M_p}(\Sigma)\right)^{\alpha-1}=\frac{1}{{\rm
cap}_{\alpha}f(\mathcal{E})}\,.
\end{equation}
Finally, by~(\ref{eq5E}) and~(\ref{eq7B}) we obtain that
$$
{\rm cap}_{\alpha}f(\mathcal{E}) \leqslant \frac{1}{M_{\alpha}
\left(\bigcup\limits_{r_1<r<r_2} f(S(x_0, r))\right)^{\alpha-1}}\,.
$$
By Lemma~\ref{lem4}, we obtain that
$$
{\rm cap}_{\alpha}f(\mathcal{\mathcal{E}}) \leqslant
\frac{1}{\left(\int\limits_{r_1}^{r_2} \frac{dr}{\Vert
\,Q\Vert_{s}(r)}\right)^{s}}=\frac{1}{I^{*\,\alpha-1}}\,,
$$
as required.~$\Box$
\end{proof}

\medskip
The following result is proved in~\cite[Theorem~5]{Sev$_3$}.

\medskip
\begin{proposition}\label{pr5}
{\,\sl Let $x_0\in \partial D,$ let $f:D\rightarrow {\Bbb C}$ be an
open, discrete and closed bounded lower $Q$-mapping with a respect
to $p$-modulus in $D\subset{\Bbb C},$ $Q\in
L_{loc}^{\frac{1}{p-1}}({\Bbb C}),$ $1<p,$ and
$\alpha:=\frac{p}{p-1}.$ Then for any
$\varepsilon_0<d_0:=\sup\limits_{x\in D}|x-x_0|$ and any compactum
$C_2\subset D\setminus B(x_0, \varepsilon_0)$ there is
$\varepsilon_1,$ $0<\varepsilon_1<\varepsilon_0,$ such that the
inequality
\begin{equation}\label{eq3B}
M_{\alpha}(f(\Gamma(C_1, C_2, D)))\leqslant \int\limits_{A(x_0,
\varepsilon, \varepsilon_1)}Q^{\frac{1}{p-1}}(x)
\eta^{\alpha}(|x-x_0|)\,dm(x)\,,
\end{equation}
holds for any $\varepsilon\in (0, \varepsilon_1)$ and any compactum
$C_1\subset \overline{B(x_0, \varepsilon)}\cap D,$ where $A(x_0,
\varepsilon, \varepsilon_1)=\{x\in {\Bbb C}:
\varepsilon<|x-x_0|<\varepsilon_1\}$ and $\eta: (\varepsilon,
\varepsilon_1)\rightarrow [0,\infty]$ is any nonnegative Lebesgue
measurable function such that
\begin{equation}\label{eq6B}
\int\limits_{\varepsilon}^{\varepsilon_1}\eta(r)\,dr=1\,.
\end{equation}
 }
\end{proposition}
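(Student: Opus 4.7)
The plan is to reduce Proposition~\ref{pr5} to the interior capacity estimate of Lemma~\ref{l4.4}, with a preliminary step that exploits closedness of $f$ to localize near the boundary point $x_0$. First, because $f$ is bounded, open, discrete and closed, its cluster set at $x_0\in\partial D$ satisfies $C(f,x_0)\cap f(D)=\varnothing$; in particular, $C(f,x_0)$ is disjoint from the compactum $f(C_2)\subset f(D)$, which is at positive distance from $\partial f(D)$. Therefore I can choose $\varepsilon_1\in(0,\varepsilon_0)$ so small that $\overline{f(\overline{B(x_0,\varepsilon_1)}\cap D)}$ does not meet $f(C_2)$, and moreover $\overline{B(x_0,\varepsilon_1)}\cap C_2=\varnothing$. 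This step is exactly where closedness and boundedness are used, and I regard it as the main obstacle: I will justify it by the standard argument that, for closed $f$, the cluster set at any boundary point lies in the boundary of $f(D)$.

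Next, for each $\varepsilon\in(0,\varepsilon_1)$ and each compact $C_1\subset\overline{B(x_0,\varepsilon)}\cap D$, every sphere $S(x_0,r)$ with $r\in(\varepsilon,\varepsilon_1)$ separates $C_1$ from $C_2$ in $D$. Setting $\Sigma_\varepsilon$ equal to the family of intersections $D(x_0,r)=S(x_0,r)\cap D$, $r\in(\varepsilon,\varepsilon_1)$, the images $f(D(x_0,r))$ separate $f(C_1)$ from $f(C_2)$ inside $f(D)$. I would then apply the Ziemer--Hesse duality (as in formulas~(\ref{eq3}) and~(\ref{eq4}), used also in the proof of Lemma~\ref{l4.4}) to obtain
\begin{equation*}
M_\alpha(f(\Gamma(C_1,C_2,D)))\leqslant\frac{1}{\bigl(M_p(f(\Sigma_\varepsilon))\bigr)^{\alpha-1}},\qquad \alpha=\tfrac{p}{p-1},
\end{equation*}
arguing exactly as in the derivation of~(\ref{eq5E})--(\ref{eq7B}) but with the boundary condenser built from $C_1$ and $C_2$ rather than from concentric balls.

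Now the lower $Q$-mapping property comes in. By Lemma~\ref{lem4} (applied to the ring $A(x_0,\varepsilon,\varepsilon_1)$ instead of $A(x_0,\varepsilon,r_0)$, which is legitimate because~(\ref{eq1A}) is formulated for arbitrary $r_0\in(0,d_0)$ and hence for $\varepsilon_1$),
\begin{equation*}
M_p(f(\Sigma_\varepsilon))\geqslant \int_{\varepsilon}^{\varepsilon_1}\frac{dr}{\|Q\|_s(r)},\qquad s=\tfrac{1}{p-1}=\alpha-1.
\end{equation*}
Combining the two displays yields
\begin{equation*}
M_\alpha(f(\Gamma(C_1,C_2,D)))\leqslant\frac{1}{\Bigl(\int_{\varepsilon}^{\varepsilon_1}dr/\|Q\|_s(r)\Bigr)^{\alpha-1}}.
\end{equation*}

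Finally I invoke Proposition~\ref{pr3} (with $Q_*=Q^{1/(p-1)}$, so that $q_{x_0}(r)$ equals $\|Q\|_s(r)/(2\pi r)^{?}$ up to the normalization appearing in~(\ref{eq3.1A})) together with Remark~\ref{rem3} to pass from the special choice of $\eta$ satisfying $\int\eta\,dr=1$ to an arbitrary nonnegative measurable $\eta$ satisfying~(\ref{eq6B}). This converts the right-hand side to the integral $\int_{A(x_0,\varepsilon,\varepsilon_1)}Q^{1/(p-1)}(x)\eta^\alpha(|x-x_0|)\,dm(x)$ as required in~(\ref{eq3B}). The routine computations are the Ziemer--Hesse duality and the reindexing of the spherical average; the conceptually delicate point that I expect to write out carefully is the initial choice of $\varepsilon_1$, because this is precisely the place where the boundary hypothesis $x_0\in\partial D$ forces us to use closedness of $f$ (not just finiteness of $N(f,D)$) in order to keep the images of small balls around $x_0$ away from $f(C_2)$.
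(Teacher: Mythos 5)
Your argument is correct and is essentially the standard one: the paper itself does not prove Proposition~\ref{pr5} but quotes it from \cite[Theorem~5]{Sev$_3$}, and your route --- choosing $\varepsilon_1$ via the cluster-set/properness consequence of closedness so that $f(\overline{B(x_0,\varepsilon_1)}\cap D)$ misses $f(C_2)$, then running the Ziemer--Hesse duality exactly as in~(\ref{eq5E})--(\ref{eq7B}), invoking Lemma~\ref{lem4}, and finishing with Proposition~\ref{pr3} --- is precisely the scheme of that reference and of the paper's own interior-point analogue, Lemma~\ref{l4.4}. The only cosmetic imprecisions are that it is $\partial_{f(D)}f(B(x_0,r)\cap D)\subset f(S(x_0,r)\cap D)$ (not $f(D(x_0,r))$ itself) that separates, and the unresolved normalization $\Vert Q\Vert_s(r)=(2\pi r\,q^{*}_{x_0}(r))^{1/s}$, both of which work out exactly as in Lemma~\ref{l4.4}.
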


\begin{remark}
Note that, if~(\ref{eq2*!A}) holds for any function $\eta$ with a
condition (\ref{eq6B}), then the same relationship holds for any
function $\eta$ with the condition~(\ref{eq8BC}). Indeed, let $\eta$
be a nonnegative Lebesgue function that satisfies the condition
(\ref{eq8BC}). If $J:=\int\limits_{r_1}^{r_2}\eta(t)\,dt<\infty,$
then we put $\eta_0:=\eta/J.$ Obviously, the function $\eta_0$
satisfies condition~(\ref{eq6B}). Then the relation~(\ref{eq3B})
gives that
$$M_{\alpha}(f(\Gamma(C_1, C_2, D)))\leqslant
$$
\begin{equation}\label{eq1X}
\frac{1}{J^{\alpha}}\int\limits_A Q(x)\cdot
\eta^{\alpha}(|x-x_0|)\,dm(x)\leqslant \int\limits_A Q(x)\cdot
\eta^{\alpha}(|x-x_0|)\,dm(x)
\end{equation}
because $J\geqslant 1.$ Let now $J=\infty.$ Then, by
\cite[Theorem~I.7.4]{Sa}, a function $\eta$ is a limit of a
nondecreasing nonnegative sequence of simple functions $\eta_m,$
$m=1,2,\ldots .$ Set
$J_m:=\int\limits_{r_1}^{r_2}\eta_m(t)\,dt<\infty$ and
$w_m(t):=\eta_m(t)/J_m.$ Then, similarly to~(\ref{eq1X}) we obtain
that
$$M_{\alpha}(f(\Gamma(C_1, C_2, D)))\leqslant
$$
\begin{equation}\label{eq11E}
\frac{1}{J_m^{\alpha}}\int\limits_A Q(x)\cdot
\eta_m^{\alpha}(|x-x_0|)\,dm(x)\leqslant \int\limits_A Q(x)\cdot
\eta_m^{\alpha}(|x-x_0|)\,dm(x)\,,
\end{equation}
because $J_m\rightarrow J=\infty$ as $m\rightarrow\infty$
(see~\cite[Lemma~I.11.6]{Sa}). Thus, $J_m\geqslant 1$ for
sufficiently large $m\in {\Bbb N}.$ Observe that, a functional
sequence $\varphi_m(x)=Q_*(x)\cdot \eta_m^{\alpha}(|x-x_0|),$
$m=1,2\ldots ,$ is nonnegative, monotone increasing and converges to
a function $\varphi(x):=Q_*(x)\cdot \eta^{\alpha}(|x-x_0|)$ almost
everywhere. By the Lebesgue theorem on the monotone convergence
(see~\cite[Theorem~I.12.6]{Sa}), it is possible to go to the limit
on the right side of the inequality~(\ref{eq11E}), which gives us
the desired inequality~(\ref{eq2*!A}).
\end{remark}

\medskip
The following result is proved in~\cite[Theorem~6]{Sev$_2$}.

\medskip
\begin{proposition}\label{pr6}
{\,\sl Let $x_0\in \partial D,$ and let $f:D\rightarrow {\Bbb C}$ be
a bounded lower $Q$-homeomorphism with respect to $p$-modulus in a
domain $D\subset{\Bbb C},$ $Q\in L_{\rm loc}^{\frac{1}{p-1}}({\Bbb
C}),$ $p>1$ and $\alpha:=\frac{p}{p-1}.$ Then $f$ is a ring
$Q^{\frac{1}{p-1}}$-homeomorphism  with respect to $\alpha$-modulus
at this point, where $\alpha:=\frac{p}{p-1}.$
 }
\end{proposition}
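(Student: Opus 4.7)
The plan is to reduce the statement directly to Proposition~\ref{pr5}, which already does most of the work for open, discrete, and closed lower $Q$-mappings. First I would verify that the hypotheses of Proposition~\ref{pr5} apply to the homeomorphism $f$. A homeomorphism onto its image is trivially open; it is discrete because every fiber is a singleton; and $f \colon D \to f(D)$ is closed because a homeomorphism carries closed sets of $D$ to closed sets of $f(D)$. Since $f$ is assumed bounded and a lower $Q$-mapping with respect to $p$-modulus in $D$ with $Q \in L_{\rm loc}^{1/(p-1)}({\Bbb C})$, all hypotheses of Proposition~\ref{pr5} are in place at the boundary point $x_0 \in \partial D$.

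Next I would invoke Proposition~\ref{pr5} at $x_0$. For any $\varepsilon_0 < d_0 = \sup_{x \in D}|x - x_0|$ and any compactum $C_2 \subset D \setminus B(x_0, \varepsilon_0)$, this produces an $\varepsilon_1 \in (0, \varepsilon_0)$ such that inequality~(\ref{eq3B}) holds for every $\varepsilon \in (0, \varepsilon_1)$, every compactum $C_1 \subset \overline{B(x_0, \varepsilon)} \cap D$, and every nonnegative Lebesgue measurable $\eta \colon (\varepsilon, \varepsilon_1) \to [0, \infty]$ satisfying the normalization $\int_{\varepsilon}^{\varepsilon_1} \eta(r)\, dr = 1$ from~(\ref{eq6B}). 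The integrand on the right-hand side is $Q^{1/(p-1)}(x)\, \eta^{\alpha}(|x - x_0|)$, which matches the $Q^{1/(p-1)}$ in the conclusion.

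The only remaining step is to weaken the normalization $\int \eta\, dr = 1$ to the defining condition $\int \eta\, dr \geqslant 1$ from~(\ref{eq8BC}), so that $f$ satisfies the ring $Q^{1/(p-1)}$-homeomorphism property with respect to $\alpha$-modulus in the sense of~(\ref{eq2*!A}). This is carried out by the same two-case rescaling-and-approximation argument presented in the Remark immediately preceding this proposition: if $J := \int_{\varepsilon}^{\varepsilon_1} \eta(r)\, dr$ is finite, replace $\eta$ by $\eta_0 = \eta/J$ (noting $J \geqslant 1$); if $J = \infty$, approximate $\eta$ from below by a monotone sequence of simple functions $\eta_m$ with $J_m := \int \eta_m \, dr \to \infty$, apply~(\ref{eq3B}) to each $\eta_m/J_m$, and pass to the limit using the Lebesgue monotone convergence theorem.

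I do not expect a genuine obstacle in this argument, since Proposition~\ref{pr5} already handles the analytically hard part (relating $M_{\alpha}(f(\Gamma(C_1, C_2, D)))$ to an integral of $Q^{1/(p-1)}$ on a suitable ring, via capacity–modulus duality and Lemma~\ref{lem4}). The role of the present proposition is mainly to repackage Proposition~\ref{pr5} into the ring-homeomorphism language; the key bookkeeping is checking that a homeomorphism automatically satisfies the openness, discreteness, and closedness assumptions, and that the normalization of $\eta$ can be relaxed as in the preceding remark.
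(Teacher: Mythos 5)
You should first note that the paper does not actually prove Proposition~\ref{pr6}: it is quoted from an earlier work (see \cite[Theorem~6]{Sev$_2$}), so there is no internal proof to compare against, and your argument has to stand on its own. Unfortunately it does not, and the problem is not in the easy bookkeeping (a homeomorphism is indeed open, discrete and closed onto its image, and the relaxation of the normalization of $\eta$ is the standard argument from the preceding remark) but in the quantifier structure of the reduction. The ring $Q^{\frac{1}{p-1}}$-homeomorphism property, i.e.\ relation~(\ref{eq2*!A}), requires the modulus inequality for \emph{every} pair $0<r_1<r_2$, \emph{every} pair of continua $C_1\subset\overline{B(x_0,r_1)}$ and $C_2\subset D\setminus B(x_0,r_2)$, and every admissible $\eta$ on $(r_1,r_2)$, with the integral taken over the full ring $A(x_0,r_1,r_2)$. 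Proposition~\ref{pr5} delivers something strictly weaker: given $\varepsilon_0$ and $C_2$ it produces \emph{some} $\varepsilon_1<\varepsilon_0$, not under your control, and inequality~(\ref{eq3B}) then holds only for continua $C_1$ contained in $\overline{B(x_0,\varepsilon)}$ with $\varepsilon<\varepsilon_1$ and for $\eta$ normalized on $(\varepsilon,\varepsilon_1)$. If the $\varepsilon_1$ furnished by Proposition~\ref{pr5} is smaller than your given $r_1$, you cannot handle an arbitrary $C_1\subset\overline{B(x_0,r_1)}$ at all; and even when $\varepsilon_1>r_1$, a function $\eta$ admissible on $(r_1,r_2)$ need not restrict to an admissible function on $(r_1,\varepsilon_1)$, so the two right-hand sides do not match. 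No rescaling of $\eta$ repairs this, because the defect lies in the existential $\varepsilon_1$ and in the domain of integration, not in the normalization.

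The correct route for a homeomorphism bypasses Proposition~\ref{pr5} entirely and explains why no auxiliary radius $\varepsilon_1$ is needed in the injective case: for every $r\in(r_1,r_2)$ the set $f(S(x_0,r)\cap D)$ separates $f(C_1)$ from $f(C_2)$ in $f(D)$ (this is exactly where injectivity is used; for branched maps one only controls $\partial f(B(x_0,r))$ and must shrink the ring, which is the source of the $\varepsilon_1$ in Proposition~\ref{pr5}). One then combines the lower $Q$-estimate in the form of Lemma~\ref{lem4} with the Ziemer--Hesse duality (\ref{eq3})--(\ref{eq4}) and Proposition~\ref{pr3}, following the same pattern as the proofs of Theorem~\ref{th1} and Lemma~\ref{l4.4}; this yields~(\ref{eq2*!A}) over the whole ring $A(x_0,r_1,r_2)$ for all admissible $C_1$, $C_2$ and $\eta$. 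If you want a self-contained proof rather than a citation, that is the argument to write out.
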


\medskip
{\it Proof of Theorem~\ref{th1B}.} Fix $z_0\in \overline{D}.$ Two
situations are possible: when $z_0\in D,$ and when $z_0\in \partial
D.$ Let $z_0\in D.$ Set $p:=\frac{\alpha}{\alpha-1},$ then
$\alpha=\frac{p}{p-1}.$ Due to Theorem~\ref{thOS4.2}, $f$ satisfies
the relation~(\ref{eq1A}) with
\begin{equation}\label{eq17A} Q(z):=N(f, D)\cdot K_{T, p, z_0}(z, f)=N(f,
D)\cdot\frac{\left(\biggl|(f^{\,\prime}(z))\frac{z-z_0}{|z-z_0|}\biggr|\right)
^{p}}{|J(z, f)|}\,.
\end{equation}
Now, by Lemma~\ref{l4.4} $f$ satisfies the relation~(\ref{eq4B})
with
\begin{equation}\label{eq18}
Q^{\,*}(z):= K^{\alpha-1}_{T, p, z_0}(z,
f):=\left(\frac{\left(\biggl|(f^{\,\prime}(z))\frac{z-z_0}{|z-z_0|}\biggr|\right)
^{p}}{|J(z, f)|}\right)^{\alpha-1}\,.
\end{equation}
Observe that the relation
\begin{equation}\label{eq5}
M_{\alpha}(f(\Gamma(C_1, C_2, D)))\leqslant {\rm
cap}_{\alpha}(f(B(z_0, r_2)), f(\overline{B(z_0, r_1)}))
\end{equation}
holds for all $0<r_1<r_2<d_0:={\rm dist}\, (z_0,
\partial D)$ and for any
continua $C_1\subset \overline{B(z_0, r_1)},$ $C_2\subset D\setminus
B(z_0, r_2).$ Indeed, $f(\Gamma(C_1, C_2, D))>\Gamma_{f(E)},$ where
$E:=(f(B(z_0, r_2)), f(\overline{B(z_0, r_1)})),$ and
$\Gamma_{f(E)}$ is a family from Proposition~\ref{pr4} for the
condenser $f(E).$ The relation~(\ref{eq5}) finishes the proof for
the case $z_0\in D.$ Let now $z_0\in \partial{D}.$ Again, by
Theorem~\ref{thOS4.2}, $f$ satisfies the relation~(\ref{eq1A}) with
$Q(z)$ from~(\ref{eq17A}). Now $f$ satisfies the
relation~(\ref{eq2*!A}) with $Q^{\,*}(z)$ from the
relation~(\ref{eq18}) by Proposition~\ref{pr6}.~$\Box$

\medskip
{\it Proof of Theorem~\ref{th1A}} directly follows by
Theorem~\ref{thOS4.2} and Lemma~\ref{l4.4}.~$\Box$

\medskip
{\it Proof of Theorem~\ref{th2}} directly follows by
Theorem~\ref{thOS4.2} and Proposition~\ref{pr6}.~$\Box$

%=================Список литературы====================
%\end{fulltext}

CONTACT INFORMATION

\medskip
{\bf \noindent Evgeny Sevost'yanov} \\
{\bf 1.} Zhytomyr Ivan Franko State University,  \\
40 Bol'shaya Berdichevskaya Str., 10 008  Zhytomyr, UKRAINE \\
{\bf 2.} Institute of Applied Mathematics and Mechanics\\
of NAS of Ukraine, \\
1 Dobrovol'skogo Str., 84 100 Slavyansk,  UKRAINE\\
esevostyanov2009@gmail.com

\medskip
{\bf \noindent Valery Targonskii} \\
Zhytomyr Ivan Franko State University,  \\
40 Bol'shaya Berdichevskaya Str., 10 008  Zhytomyr, UKRAINE \\
w.targonsk@gmail.com


\begin{thebibliography}{99}
{\small

\bibitem[Fe]{Fe} {\sc Federer, H.:} Geometric Measure Theory. -- Springer, Berlin etc., 1969.

\bibitem[Ge]{Ge} {\sc Gehring, F.W.:} Rings and quasiconformal mappings in
space. - Trans. Amer. Math. Soc.~103, 1962, 353--393.

\bibitem[Hes]{Hes} {\sc Hesse, J.:} A $p$-extremal length and $p$-capacity
equality. - Ark. Mat. 13, 1975, 131--144.

\bibitem[IS]{IS} {\sc Ilyutko, D., E.~Sevost'yanov:} Boundary behaviour of open discrete mappings on
Riemannian manifolds. - Sb. Math. 209:5, 2018, 605--651.

\bibitem[KR]{KR} {\sc Kovtonyuk, D., V.~Ryazanov:} New modulus estimates
in Orlicz-Sobolev classes. - Annals of the University of Bucharest
(mathematical series) 5 (LXIII), 2014, 131--135.

\bibitem[KRSS]{KRSS} {\sc Kovtonyuk, D., V.~Ryazanov, R.~Salimov and
E.~Sevost'yanov:} Toward the theory of Orlicz-Sobolev classes. - St.
Petersburg Math. J. 25:6, 2014, 929--963.

\bibitem[Ku]{Ku} {\sc Kuratowski, K.} Topology, vol.~1. - Academic Press, New York, 1968.

\bibitem[LV]{LV} {\sc Lehto, O. and K.~Virtanen:} Quasiconformal Mappings
in the Plane. - Springer, New York etc., 1973.

\bibitem[MRSY]{MRSY}
{\sc Martio, O., V. Ryazanov, U. Srebro, and E. Yakubov:} Moduli in
modern mapping theory. - Springer Science + Business Media, LLC, New
York, 2009.

\bibitem[MS]{MS} {\sc Martio, O., U.~Srebro U.:} Periodic
quasimeromorphic mappings. - J. Analyse Math. 28,  1975, 20--40.

\bibitem[Ma]{Ma} {\sc Maz'ya, V.:} Sobolev Spaces. - Springer-Verlag, Berlin, 1985.

\bibitem[Re]{Re} {\it Reshetnyak~Yu.G.} Space Mappings with Bounded
Distortion. -- Transl. of Math. Monographs, \textbf{73}, AMS, 1989.

\bibitem[Ri]{Ri} {\sc Rickman, S.:} Quasiregular mappings. -- Springer-Verlag, Berlin, 1993.

\bibitem[RS]{RS} {\sc Ryazanov, V.I. and E.A.~Sevost'yanov:} Equicontinuous
classes of ring $Q$-homeomorphisms. - Siberian Math. J.~48:6, 2007,
1093--1105.

\bibitem[RSY$_1$]{RSY$_1$} {\sc Ryazanov, V., U.~Srebro and E.~Yakubov:}
On ring solutions of Beltrami equations. - J. d'Anal. Math. 96,
2005, 117--150.

\bibitem[RSY$_2$]{RSY$_2$} {\sc Ryazanov, V., R.~Salimov R. and E.~Yakubov:}
On Boundary Value Problems for the Beltrami Equations. -
Contemporary Mathematics Volume 591, 2013, 211--242.

\bibitem[Sa]{Sa} {\sc Saks, S.:} Theory of the Integral. -- Dover Publ. Inc., New York,
1964.

\bibitem[SalSev$_1$]{SalSev$_1$}
{\sc Sevost'yanov, E., R.~Salimov:} On a V\"{a}is\"{a}l\"{a}-type
inequality for the angular dilatation of mappings and some of its
applications. - Journal of Mathematical Sciences 218:1, 2016,
69--88.

\bibitem[SalSev$_2$]{SalSev$_2$}
{\sc Salimov, R.R., E.A.~Sevost'yanov:} On equicontinuity of one
family of inverse mappings in terms of prime ends. - Ukr. Math. Zh.
70:9, 2019,  1456--1466.

\bibitem[Sev$_1$]{Sev$_1$} {\sc Sevost'yanov, E:} On the local
behavior of Open Discrete Mappings from the Orlicz-Sobolev Classes.
- Ukr. Math. J. 68:9, 2017, 1447--1465.

\bibitem[Sev$_2$]{Sev$_2$} {\sc Sevost'yanov, E.A.:}  Boundary
behavior and equicontinuity for families of mappings in terms of
prime ends. - St. Petersburg Math. J. 30:6, 2019, 973--1005.

\bibitem[Sev$_3$]{Sev$_3$} {\sc Sevost'yanov, E.:}
The inverse Poletsky inequality in one class of mappings. - Journal
of Mathematical Sciences 264:4, 2022, 455--470.

\bibitem[SSP]{SSP} {\sc Sevost'yanov, E., R.~Salimov, E. Petrov:}
On the removable of singularities of the Orlicz-Sobolev classes. -
J. Math. Sci. 222:6, 2017, 723--740.

\bibitem[SST]{SST} {\sc Sevost'yanov, E.A, R.R.~Salimov and V.~Targonskii:}
On modulus inequality of the order $p$ for the inner dilatation. -
https://arxiv.org/abs/2204.07870 .

\bibitem[SevSkv]{SevSkv} {\sc Sevost’yanov, E.A., S.A.~Skvortsov:}
On the local behavior of the Orlicz-Sobolev classes. - Journ. of
Math. Sciences~224:4, 2017, 563--581.

\bibitem[ST]{ST} {\sc Sevost'yanov, E. and V.~Targonskii:}
On the inverse Poletsky inequality with cotangent dilatation. -
https://arxiv.org/abs/2206.09869 .

\bibitem[St]{St} {\sc Sto\"{i}low, S.:} L\c{e}cons sur les principes topologiques de la
th\'{e}orie des fonctions analytiques. - Gauthier-Villars, Paris,
1956.

\bibitem[Shl]{Shl} {\sc Shlyk, V.A.:} The equality between $p$-capacity and
$p$-modulus. - Siberian Mathematical Journal 34:6, 1993, 1196-–1200.

\bibitem[Va]{Va} {\sc V\"{a}is\"{a}l\"{a} J.:}
Lectures on $n$-dimensional quasiconformal mappings. - Lecture Notes
in Math. 229, Springer-Verlag, Berlin etc., 1971.

\bibitem[Vu]{Vu} {\sc Vuorinen, M.:} Exceptional sets and boundary behavior of quasiregular
mappings in $n$-space. - Ann. Acad. Sci. Fenn. Ser. A 1. Math.
Dissertationes 11, 1976, 1--44.

\bibitem[Zi$_1$]{Zi$_1$} {\sc Ziemer, W.P.:} Extremal length and conformal
capacity. - Trans. Amer. Math. Soc. 126:3, 1967, 460--473.

\bibitem[Zi$_2$]{Zi$_2$} {\sc Ziemer, W.P.:} Extremal length and
$p$-capacity. - Michigan Math. J. 16, 1969, 43--51.
 }


\end{thebibliography}
\end{document}